\renewcommand{\mathbf}{\mathbold}
\renewcommand{\mathcal}{\mathscr}
\newtheoremstyle{scthm} % name
{}                    % Space above
{}                    % Space below
{\itshape}                   % Body font
{\parindent}                           % Indent amount
{\scshape}                   % Theorem head font
{.}                          % Punctuation after theorem head
{.5em}                       % Space after theorem head
{}  % Theorem head spec (can be left empty, meaning ‘normal’)
\newtheoremstyle{scdef} % name
{}                    % Space above
{}                    % Space below
{}                   % Body font
{\parindent}                           % Indent amount
{\scshape}                   % Theorem head font
{.}                          % Punctuation after theorem head
{.5em}                       % Space after theorem head
{}  % Theorem head spec (can be left empty, meaning ‘normal’)
\theoremstyle{scthm}
\newtheorem{theorem}{Theorem}[section]
\newtheorem{corollary}[theorem]{Corollary}
\newtheorem*{theorem*}{Theorem}
\newtheorem*{lemma*}{Lemma}
\newtheorem{lemma}[theorem]{Lemma}
\newtheorem{proposition}[theorem]{Proposition}
\newtheorem{pheorem}[theorem]{Theorem}
\theoremstyle{scdef}
\newtheorem{definition}[theorem]{Definition}
\newtheorem{assumption}[theorem]{Assumption}
\newtheorem{ex}[theorem]{Example}
\newtheorem{notation}[theorem]{Notation}
\newtheorem{remark}[theorem]{Remark}
\newcommand{\set}[1]{\left\{#1\right\}}
\newcommand{\sett}[2]{\qty{#1 \ : \ #2}}
\newcommand{\R}{\mathbb{R}}
\renewcommand{\S}{\mathbb{S}}
\DeclareMathOperator{\HG}{\text{\bf H}}
\newcommand{\vol}{\mathrm{vol}}
\newcommand{\inv}[1]{{{#1}^{-1}}}
\newcommand{\sublevel}[2]{{\inv{#1}(-\infty, {#2}]} }
\newcommand{\sublevelopen}[2]{{\inv{#1}\qty(-\infty, {#2})} }
\newcommand{\suplevel}[2]{{\inv{#1}[{#2}, \infty)}}
\newcommand{\suplevelopen}[2]{{\inv{#1}\qty({#2}, \infty)}}
\newcommand{\intlevel}[3]{{\inv{#1}\qty[{#2}, {#3}]}}
\newcommand{\level}[2]{{\inv{#1}\qty(#2)}}
\newcommand{\ratiograd}[2]{\frac{\norm{\nabla{#1}}}{\norm{\nabla{#2}}}}
\newcommand{\interior}[1]{\mathrm{int}\qty({#1})}
\newcommand{\closure}[1]{\mathrm{cl}\qty({#1})}
\newcommand{\X}{\mathbb{X}}
\renewcommand{\angle}{\measuredangle}
\renewcommand{\phi}{\varphi}
\newcommand{\Con}{\text{\bf Con}}
\newcommand{\GM}[1]{\mathcal{#1}}
\newcommand{\crit}[1]{{\text{\bf Crit}\qty(#1)}}
\newcommand{\ccrit}[1]{{\text{\bf Crit}_c\qty(#1)}}
\newcommand{\med}[1]{{\text{\bf Med}\qty(#1)}}
\newcommand{\Nneighs}[2]{\text{\bf NN}_{#1}\qty({#2})}
\newcommand{\reach}{{\tau}}
\newcommand{\fakereach}{\reach^+}
\newcommand{\fakelfs}[2]{\reach^+_{#1}\qty({#2})}
\newcommand{\lfs}[2]{\reach_{#1}\qty({#2})}
\newcommand{\UP}[1]{\text{\bf UP}\qty({#1})}
\newcommand{\projmap}[2]{\xi_{{#1}}\qty(#2)}
\newcommand{\ball}[2]{B_{#1}\qty({#2})}
\newcommand{\ballc}[2]{B_{#1}\qty[{#2}]}
\newcommand{\twoFF}{\textup{\uppercase\expandafter{\romannumeral 2}}}
\newcommand{\twoff}[2]{\textup{\uppercase\expandafter{\romannumeral 2}}\qty({#1}, {#2})}
\newcommand{\normvec}[1]{\frac{{#1}}{\norm{{#1}}}}
\newcommand{\tp}[2]{T_{#1}{#2}}
\newcommand{\anglegeo}[2]{\angle \qty({#1}, {#2})}
\newcommand{\vel}{{\dot{\gamma}}}
\newcommand{\acc}{\ddot{\gamma}}
\newcommand{\innerprod}[2]{\expval{{#1}, {#2}}}
\newcommand{\Tb}[1]{\mathcal{T}\qty({#1})}
\newcommand{\lezero}{(-\infty, 0]}
\newcommand{\vball}[1]{\mathrm{V}^{(m)}_{\mathrm{B}}\qty({#1})}
\newcommand{\vhs}[2]{\mathbf{V}^{(m)}_{\mathrm{HS}}\qty({#1}, {#2})}
\newcommand{\vhstwo}[3]{\mathbf{V}^{(m)}_{\mathrm{HS2}}\qty({#1}, {#2}, {#3})}
\title{Topological Inference of the Conley Index}
\author{Ka Man Yim}\address{Mathematical Institute, University of Oxford}
\email{yim@maths.ox.ac.uk}
\author{Vidit Nanda}
\email{nanda@maths.ox.ac.uk}
\begin{document}
	
	\maketitle
	\begin{abstract}
		The Conley index of an isolated invariant set is a fundamental object in the study of dynamical systems. Here we consider smooth functions on closed submanifolds of Euclidean space and describe a framework for inferring the Conley index of any compact, connected isolated critical set of such a function with high confidence from a sufficiently large finite point sample. The main construction of this paper is a specific index pair which is local to the critical set in question. We establish that these index pairs have positive reach and hence admit a sampling theory for robust homology inference. This allows us to estimate the Conley index, and as a direct consequence, we are also able to estimate the Morse index of any critical point of a Morse function using finitely many local evaluations.
	\end{abstract}
	
	\section*{Introduction}
	
	There is a pair of spaces at the heart of every topological quest to study gradient-like dynamics.
	Such space-pairs appear, for instance, whenever one encounters a compact $m$-dimensional Riemannian manifold $M$ endowed with a Morse function $f:M \to \R$. The fundamental result of Morse theory \cite{milnor_morse_1973} asserts that if $f$ admits a single critical value in an interval $[a,b] \subset \R$, and if this value corresponds to a unique critical point $p \in M$ of Morse index $\mu$, then the sublevel set
	\[
	M_{f \leq b} := \set{x \in M \mid f(x) \leq b}
	\]
	is obtained from $M_{f\leq a}$ by gluing a closed $m$-dimensional disk $\mathbb{D}$ along a $(\mu-1)$-dimensional boundary sphere $\mathbb{S}$ . The relevant pair\footnote{Several other pairs of spaces would satisfy the same homological properties, e.g., $(M_{f \leq b}, M_{f \leq a})$ itself, or $(M_{a \leq f \leq b}, M_{f=b})$. The goal is usually to find the simplest and most explicit space-pair.} in this case is $(\mathbb{D},\mathbb{S})$, and it follows by excision that there are isomorphisms
	\[
	\HG_\bullet\left(M_{f \leq b},M_{f \leq a}\right) \simeq  \HG_\bullet\left(\mathbb{D},\mathbb{S}\right)
	\] of (integral) relative homology groups. Thus, when working over field coefficients, the homology groups of $M_{f \leq b}$ are obtained by altering those of $M_{f \leq a}$ in precisely one of two ways --- either the $\mu$-th Betti number is incremented by one, or the $(\mu-1)$-st Betti number is decremented by one.
	
	Attempts to extend this story beyond the class of Morse functions run head-first into two significant complications --- first, since the critical points need not be isolated, one must confront arbitrary critical subsets; and second, there is no single number analogous to the Morse index which completely characterises the change in topology from $M_{f \leq a}$ to $M_{f \leq b}$. The first complication is encountered in Morse-Bott theory \cite{morsebott}, where the class of admissible functions is constrained to ones whose critical sets are normally nondegenerate submanifolds of $M$. The second complication is ubiquitous in Goresky-MacPherson's stratified Morse theory \cite{stratmorse}, where the class of admissible functions is constrained to those which only admit isolated critical points. In both cases, there are satisfactory analogues of $(\mathbb{D},\mathbb{S})$ obtained by separately considering tangential and normal Morse data; however, the constraints imposed on functions $M \to \mathbb{R}$ in these extensions of Morse theory are far too severe from the perspective of dynamical systems.

	\subsection*{The Conley Index} Conley index theory \cite{conley_isolated_1978, mischaikow_conley_2002}  provides a powerful generalisation of Morse theory which has been adapted to topological investigations of dynamics.
	
	Consider an arbitrary smooth function $f:M \to \R$ and the concomitant gradient flow $\sigma:\R \times M  \to M$. A subset $S \subset M$ is {\em invariant} under $f$ if $\sigma(t,x)$ lies in $S$ whenever $(t,x)$ lies in $\R \times S$; and such an $S$ is {\em isolated} if there exists a compact subset $N \subset M$ containing $S$ in its interior, so that $S$ is the largest invariant subset of $f$ inside $K$. Assuming that $S$ is isolated in this sense, let $N_- \subset N$ be any compact subset disjoint from the closure of $S$ so that any flow line which departs $N$ does so via $N_-$. Pairs of the form $(N,N_-)$ are called {\em index pairs} for $S$, and the relative homology $\HG_\bullet(N,N_-)$ does not depend on the choice of index pair; this relative homology is called the {\bf Conley index} of $S$. The notion of index pairs subsumes not only the pair $(\mathbb{D},\mathbb{S})$ from Morse theory, but also the analogous local Morse data for Morse-Bott functions and stratified Morse functions.
	
	The Conley index enjoys three remarkable properties as an algebraic-topological measurement of isolated invariant sets: \begin{enumerate}
		\item being relative homology classes, Conley indices are efficiently computable \cite{kmm, hmmn};
		\item if $\HG_\bullet(N,N_-)$ is nontrivial for an index pair, then one is guaranteed the existence of a nonempty invariant set in the interior of $N$; and finally,
		\item the Conley index of an isolated invariant set $S$ remains constant across sufficiently small perturbations of $f:M \to \R$ even though $S$ itself might fluctuate wildly.
	\end{enumerate}
	As a result of these  attributes, the Conley index has found widespread applications to several interesting dynamical problems across pure and applied mathematics. We have no hope of providing an exhaustive list of these success stories here, but we can at least point the interested reader to the study of fixed points of Hamiltonian diffeomorphisms \cite{conzehn}, travelling waves in predator-prey systems \cite{predprey}, heteroclinic orbits in fast-slow systems \cite{fastslow}, chaos in the Lorenz equations \cite{mmchaos}, and local Lefschetz trace formulas for weakly hyperbolic maps \cite{gmlefs}.
	
	\subsection*{Topological Inference} An enduring theme within applied algebraic topology involves recovering the homology of an unknown subset $X$ of Euclidean space $\R^d$ with high confidence from a finite point cloud $P \subset \R^d$ that lies on, or more realistically, near $X$. This task is impossible unless one assumes some form of regularity on $X$  --- no amount of finite sampling will unveil the homology groups of Cantor sets and other fractals. 
	
	The authors of \cite{niyogi_finding_2008} consider the case where $X$ is a compact Riemannian manifold and $P$ is drawn uniformly and independently from either $X$ or from a small tubular neighbourhood of $X$ in $\mathbb{R}^d$. Their main result furnishes, for sufficiently small radii $\epsilon > 0$ and probabilities $\delta \in (0,1)$, an explicit lower bound $B = B_X(\epsilon,\delta)$. If the cardinality of $P$ exceeds $B$, then it holds with probability exceeding $(1-\delta)$ that the homology of $X$ is isomorphic to that of the union $P^\epsilon$ of $\epsilon$-balls around points of $P$. Similar results have subsequently appeared for inferring homology of manifolds with boundary \cite{wang_topological_2020}, of a large class of Euclidean compacta \cite{chazal_sampling_2009}, and of induced maps on homology \cite{fmn}.
	
	A crucial regularity assumption underlying all of these results is that the map induced on homology by the inclusion $X \hookrightarrow X^{\epsilon}$ is an isomorphism for all suitably small $\epsilon > 0$. When $X$ is smooth, this can be arranged by requiring the radius $\epsilon$ to be controlled by the injectivity radius of the embedding $X \hookrightarrow \R^d$, often called the {\em reach} of $X$ --- see \cite{federer_curvature_1959}.

	\subsection*{This Paper} Here we consider a compact, connected and isolated critical set $S$ of a smooth function $f:M \to \R$ defined on a closed submanifold $M \subset \R^d$. Our contributions are threefold:
	\begin{enumerate}
		\item we construct a specific index pair $(\GM{N},\GM{N}_-)$ for $S$ in terms of auxiliary data pertaining to the isolating neighbourhood of $S$ in $M$; moreover,
		\item we establish that both $\GM{N}$ and $\GM{N}_-$ have positive reach when viewed as subsets of $\R^d$; and finally,
		\item we provide a sampling theorem for inferring the Conley index $\HG_\bullet(\GM{N},\GM{N}_-)$ from finite point samples of $\GM{N}$ and $\GM{N}_-$.
	\end{enumerate} 
	
	The auxiliary data required in our construction of $(\GM{N},\GM{N}_-)$ is a smooth real-valued function $g$, which is defined on an isolating neighbourhood and whose vanishing locus equals $S$. These are not difficult to find --- one perfectly acceptable choice of $g$ is the norm-squared of the gradient $\norm{\nabla f}^2$. Using any such $g$ along with a smoothed step function, we construct a perturbation $h:M \to \R$ of $f$ which agrees with $f$ outside the isolating neighbourhood. The set $\GM{N}$ is then obtained by intersecting a sublevel set of $f$ with a superlevel set of $h$; and similarly, $\GM{N}_-$ is obtained by intersecting the same sublevel set of $f$ with an interlevel set of $h$. The endpoints of all intervals considered in these (sub, super and inter) level sets are regular values of $f$ and $h$, i.e., $\nabla f$ and $\nabla h$. Here is a simplified version of our main result, summarising \Cref{prop:indexpairsample} and \Cref{thm:ebound}.
	
	\begin{theorem*}[A]
		Let $(\GM{N},\GM{N}_-)$ be our constructed index pair for $S$. Assume that $\X \subset \GM{N}$ is a (uniform, independent) finite point sample, and set $\X_- := \X \cap \GM{N}_-$. Then:
		\begin{enumerate}
			\item If the density of $(\X, \X_-)$ in $(\GM{N},\GM{N}_-)$ exceeds an explicit threshold $t_1$, then $\HG_\bullet(\X^\epsilon, \X^\epsilon_-)$ is isomorphic to the Conley index of $S$ over an open interval of choices of $\epsilon$;
			\item A point sample with sufficient density can be realised with high probability $1-\kappa$ from a uniform, i.i.d. sample of $(\GM{N}, \GM{N}_-)$, if the number of points exceeds a threshold $t_2$; and 
			\item The thresholds $t_1$ and $t_2$ depend only on the reach of the manifold, $C^1$ data of $f$ and $g$ on the isolating neighbourhood, and bounds on the norm of the second derivatives of $f$ and $g$. 
		\end{enumerate}
	\end{theorem*}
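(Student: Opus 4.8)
The plan is to reduce Theorem~A to the classical homology reconstruction theorem of Niyogi--Smale--Weinberger \cite{niyogi_finding_2008}, applied separately to $\GM{N}$ and to $\GM{N}_-$ and then promoted to the pair $(\GM{N},\GM{N}_-)$. Two facts are used as black boxes. First, $(\GM{N},\GM{N}_-)$ is by construction an index pair for $S$, so $\HG_\bullet(\GM{N},\GM{N}_-)$ is \emph{literally} the Conley index of $S$. Second, the positive-reach result (the second of the three contributions announced above) furnishes an explicit $\reach_0>0$ with $\reach(\GM{N})\geq\reach_0$ and $\reach(\GM{N}_-)\geq\reach_0$, and $\reach_0$ is given there as a function of $\reach(M)$, the $C^1$-norms of $f$ and $g$ on the isolating neighbourhood, the bounds on $\|\nabla^2 f\|$ and $\|\nabla^2 g\|$, and the regular values chosen to carve out $\GM{N}$ and $\GM{N}_-$. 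Granting these inputs, part~(3) is pure bookkeeping: every threshold produced below is a function of $\reach_0$, of $m=\dim M$, and of $\vol(\GM{N})$ and $\vol(\GM{N}_-)$, and the latter two are themselves bounded in terms of $\reach_0$ and $m$ via Federer's tube formula \cite{federer_curvature_1959} together with $\GM{N},\GM{N}_-\subseteq M$.

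For part~(1), suppose $\X$ is $\eta$-dense in $\GM{N}$ and $\X_-$ is $\eta$-dense in $\GM{N}_-$ (this is the meaning of ``density exceeds $t_1$''), and let $\epsilon$ lie in the Niyogi--Smale--Weinberger window, roughly $2\eta\leq\epsilon<\sqrt{3/5}\,\reach_0$. Then the nearest-point projection $\pi_{\GM{N}}$ is single-valued on the offset $\X^\epsilon$, and the straight-line homotopy $(u,t)\mapsto(1-t)u+t\,\pi_{\GM{N}}(u)$ deformation retracts $\X^\epsilon$ onto $\GM{N}$; the same statement holds with $(\GM{N}_-,\X_-)$ in place of $(\GM{N},\X)$. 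The crux is \emph{compatibility}: one must verify that $\pi_{\GM{N}}$ maps $\X^\epsilon_-$ into $\GM{N}_-$ and that the homotopy keeps $\X^\epsilon_-$ inside itself, so that the two retractions assemble into a deformation retraction of pairs from $(\X^\epsilon,\X^\epsilon_-)$ onto $(\GM{N},\GM{N}_-)$. This is where the precise geometry of the construction is needed: $\GM{N}_-$ is a ``face'' of $\GM{N}$, cut from a fixed sublevel set of $f$ by an inter-level set of $h$ whose endpoints are regular values of $h$, so for $\epsilon$ small relative both to $\reach_0$ and to a lower bound on $\|\nabla h\|$ near that face, the point of $\GM{N}$ nearest to any $u\in\X^\epsilon_-$ again lies on the face. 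With compatibility secured, functoriality of relative homology yields $\HG_\bullet(\X^\epsilon,\X^\epsilon_-)\simeq\HG_\bullet(\GM{N},\GM{N}_-)$, and the admissible set of $\epsilon$ is the promised open interval.

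For part~(2), fix a target density $\eta$ as above. A greedy covering argument produces points $c_1,\dots,c_\ell\in\GM{N}$ with $\GM{N}\subseteq\bigcup_i\ball{\eta/2}{c_i}$, where $\ell$ is at most $\vol(\GM{N})$ divided by the least volume of an $(\eta/4)$-ball centred on $\GM{N}$ (the $(\eta/4)$-balls about a maximal $(\eta/2)$-separated set are disjoint), and the last quantity is bounded below in terms of $\reach_0$ and $m$ by standard volume comparison. A uniform i.i.d.\ sample that meets every $\ball{\eta/2}{c_i}$ is $\eta$-dense, and by a union bound this fails with probability at most $\ell(1-\beta)^{|\X|}$, where $\beta$ lower-bounds the normalised volume of an $(\eta/2)$-ball in $\GM{N}$; solving $\ell(1-\beta)^{|\X|}\leq\kappa/2$ for $|\X|$ yields an explicit threshold. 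The same argument applied to $\GM{N}_-$ --- sampling it directly, or else conditioning the $\GM{N}$-sample on the event of landing in $\GM{N}_-$, which requires only the elementary lower bound on $\vol(\GM{N}_-)/\vol(\GM{N})$ that the construction supplies --- controls the density of $\X_-$ with probability $\kappa/2$. Taking $t_2$ to be the larger of the two thresholds, a union bound gives sufficient density in both spaces with probability at least $1-\kappa$; this is part~(2), and every quantity in $t_1$ and $t_2$ depends only on the data enumerated in part~(3).

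The step I expect to demand genuine work is the compatibility assertion in part~(1): a priori a point of $\X^\epsilon_-$ may project to a point of $\GM{N}\setminus\GM{N}_-$, which would break the map of pairs and hence the isomorphism. Controlling this is precisely where the regular-value hypotheses on the level sets of $h$ and the way $\GM{N}_-$ sits as a face of $\GM{N}$ become indispensable, and it is presumably why \Cref{prop:indexpairsample} is proved on its own terms rather than being read off from the existing sampling literature. Everything else --- the reconstruction retraction, the covering and union-bound estimate, and the tracking of constants back to \Cref{thm:ebound} and the positive-reach result --- is routine once those ingredients are in place.
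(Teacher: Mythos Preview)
Your outline diverges from the paper in two places, one harmless and one a genuine gap.

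For part~(1) you aim for a deformation retraction \emph{of pairs} and correctly flag the compatibility of $\pi_{\GM{N}}$ with $\GM{N}_-$ as the delicate step. The paper never confronts this. Since $\X$ is $\delta$-dense in $\GM{N}$ and $\X_-$ is $\delta$-dense in $\GM{N}_-$ with $\delta<\epsilon$, there is an inclusion of pairs $(\GM{N},\GM{N}_-)\hookrightarrow(\X^\epsilon,\X_-^\epsilon)$. The two separate retractions (via $\xi_{\GM{N}}$ on $\X^\epsilon$ and via $\xi_{\GM{N}_-}$ on $\X_-^\epsilon$, which need not agree on $\X_-^\epsilon$) are used only to certify that each absolute inclusion is a homotopy equivalence; naturality of the long exact sequence plus the five lemma then gives $\HG_\bullet(\GM{N},\GM{N}_-)\cong\HG_\bullet(\X^\epsilon,\X_-^\epsilon)$. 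So the issue you single out as ``the step I expect to demand genuine work'' is bypassed entirely, and \Cref{prop:indexpairsample} is in fact a short argument.

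For part~(2), your assertion that $\inf_{p\in\GM{N}}\vol\bigl(\ball{\eta/4}{p}\cap\GM{N}\bigr)$ is ``bounded below in terms of $\reach_0$ and $m$ by standard volume comparison'' is false, and this is where the real work lies. A convex planar wedge of opening angle $\alpha$ has infinite reach, yet $\vol\bigl(\ball{r}{0}\cap\text{wedge}\bigr)=\tfrac{\alpha}{2}r^2\to 0$ as $\alpha\to 0$. The sets $\GM{N}$ and $\GM{N}_-$ have precisely such corners along $\level{f}{\alpha}\cap\level{h}{\beta}$ and $\level{f}{\alpha}\cap\level{h}{\gamma}$, so no lower bound on $K(r)$ can be read off from reach alone. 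The paper's \Cref{thm:ebound} produces the bound $K(r)\geq\vhstwo{\cdot}{\rho}{\phi_{12}}$, where $\phi_{12}$ is the minimal angle between $\nabla f$ and $\nabla h$ along the corner locus; obtaining it occupies most of \Cref{sec:hominfidxpr} (the bottleneck thickness of \Cref{def:bottleneckthickness}, \Cref{prop:bottlethickbound}, and \Cref{lem:regintproj1,lem:regintproj2}). This angle is $C^1$ data of $f$ and $g$ and is exactly why part~(3) lists $C^1$ data rather than only the reach; your dependency list ($\reach_0$, $m$, volumes) is therefore incomplete. Your covering and union-bound skeleton is fine, but the volume input it requires is the substance of the theorem, not a standard fact.
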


	An essential step in our proof involves showing that $\GM{N}$ and $\GM{N}_-$ have positive reach. Our strategy for establishing this fact is to prove a considerably more general result, which we hope will be of independent interest. We call $E \subset \R^d$ a {\bf regular intersection} if it can be written as
	\[
	E = \bigcap_{i=1}^\ell \level{f}{I_i}
	\]
	for some integer $\ell > 0$; here each $f_i:M \to \R$ is a smooth function with $0$ a regular value\footnote{That is, the gradient $\nabla_x f_i$ is nonzero at $x = 0$}, and each $I_i$ is either the point $\set{0}$ or the interval $(-\infty,0]$. The geometry of such intersections is coarsely governed by two positive real numbers $\mu$ and $\Lambda$ --- here $\mu$ bounds from below the singular values of all Jacobian minors of $(f_1,\ldots,f_\ell):M \to \R^\ell$, while $\Lambda$ bounds from above the operator norm of all the Hessians $Hf_i$. 
	
	\begin{lemma*}[B]
		Every $(\mu,\Lambda)$-regular intersection of $\ell$ smooth functions $M \to \R$ has reach $\reach$ bounded from below by
		\[ \frac{1}{\reach} \leq \frac{1}{\reach_M}  + \sqrt{\ell}\cdot\frac{\Lambda}{\mu},\]
		where $\reach_M$ is the reach of $M$.
	\end{lemma*}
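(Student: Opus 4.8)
The plan is to bound the reach via the standard characterization in terms of the normal bundle: $\reach(E)$ is the largest $r$ such that every point at distance $<r$ from $E$ has a unique nearest point, equivalently (by Federer) the reach is controlled by a local condition relating how far a unit normal vector can rotate as the foot point moves along $E$. Concretely, for a set with positive reach one has, for all $x,y \in E$ and all unit $v \in \nv{x}{E}$, the inequality $\innerprod{v}{y-x} \leq \frac{\norm{y-x}^2}{2\reach}$; conversely, a uniform such estimate over all $x,y$ and all unit normals $v$ gives a lower bound on $\reach$. So the strategy is: (i) describe the normal cone $\nv{x}{E}$ at a point $x \in E$ in terms of the normal space $\nv{x}{M}$ together with the gradients $\nabla_x f_i$ of the active constraints at $x$; (ii) given a unit normal $v$ at $x$, split it as $v = v_M + v_\perp$ where $v_M \in \nv{x}{M}$ and $v_\perp$ lies in the span of the active $\nabla_x f_i$ inside $T_xM$; (iii) estimate $\innerprod{v}{y-x}$ by estimating $\innerprod{v_M}{y-x}$ using the reach of $M$, and $\innerprod{v_\perp}{y-x}$ using the second-order Taylor expansion of the $f_i$ together with the fact that $y$ satisfies the same constraints (as an equality, or as an inequality in the favorable direction).

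The first main step is the normal cone computation. At $x \in E$, let $A(x) \subseteq \{1,\dots,\ell\}$ be the set of indices that are "active", meaning either $I_i = \{0\}$ or ($I_i = (-\infty,0]$ and $f_i(x)=0$). I claim $\nv{x}{E} = \nv{x}{M} + C(x)$, where $C(x)$ is the cone in $T_xM$ generated by $\{\nabla_x f_i : i \in A(x), I_i = \{0\}\} \cup \{\text{(signed) }\nabla_x f_i : i \in A(x), I_i = (-\infty,0]\}$ — for the inequality constraints only the outward direction $+\nabla_x f_i$ appears. This is the KKT/tangent-cone description; it holds here because the $(\mu,\Lambda)$ nondegeneracy guarantees the active gradients are linearly independent in $T_xM$ (singular values of the Jacobian minors are bounded below by $\mu$), so $E$ is locally a manifold-with-corners sitting inside $M$, and its tangent cone is cut out by the linearized constraints.

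The second main step is the quantitative estimate. Fix unit $v \in \nv{x}{E}$; write $v = v_M + \sum_{i \in A(x)} c_i\, \nabla_x f_i / \norm{\nabla_x f_i}$ with $v_M \perp T_xM$ and $c_i \geq 0$ for the inequality indices. For $v_M$: since $y \in E \subseteq M$, the reach of $M$ gives $\innerprod{v_M}{y-x} \leq \norm{v_M}\cdot \norm{y-x}^2/(2\reach_M)$. For the tangential part: for each active $i$, Taylor expansion gives $f_i(y) = f_i(x) + \innerprod{\nabla_x f_i}{y-x} + \tfrac12 \innerprod{(y-x)}{Hf_i\,(y-x)} + (\text{manifold correction})$, and using $f_i(x)=0$, $f_i(y) \leq 0$ (with equality for equality-constraints), and $\norm{Hf_i} \leq \Lambda$ — being careful that here $Hf_i$ should be the covariant Hessian on $M$, or equivalently absorb the extrinsic second fundamental form of $M$ into the $\reach_M$ term — one gets $\innerprod{\nabla_x f_i}{y-x} \leq \tfrac{\Lambda}{2}\norm{y-x}^2$ for equality constraints (two-sided) and $\innerprod{\nabla_x f_i}{y-x} \leq \tfrac{\Lambda}{2}\norm{y-x}^2$ for inequality constraints in the $+$ direction (which is exactly the sign that appears). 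Dividing by $\norm{\nabla_x f_i}\ge\mu$ and summing against the coefficients $c_i$, together with a Cauchy–Schwarz bound $\sum c_i \le \sqrt{\ell}$ coming from $\norm{v}=1$ and $\norm{v_M}\le 1$ and the lower bound $\mu$ on singular values (so the Gram matrix of the normalized active gradients is well-conditioned), yields $\innerprod{v}{y-x} \leq \left(\tfrac{1}{2\reach_M} + \tfrac{\sqrt\ell\,\Lambda}{2\mu}\right)\norm{y-x}^2$. Comparing with the Federer inequality identifies $\tfrac1\reach \le \tfrac1{\reach_M} + \tfrac{\sqrt\ell\,\Lambda}{\mu}$.

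The step I expect to be the main obstacle is controlling the cross-terms and the intrinsic-versus-extrinsic Hessian bookkeeping cleanly: the naive Taylor expansion of $f_i$ restricted to $M$ produces a second fundamental form contribution of $M$ that must be shown to be already accounted for by the $1/\reach_M$ term and not double-counted, and the coefficient vector $(c_i)$ must be bounded using only $\mu$ (not, say, the individual gradient norms) — this is where the hypothesis that $\mu$ lower-bounds the singular values of \emph{all} Jacobian minors, rather than just the norms of individual gradients, does the real work. Getting the constant $\sqrt\ell$ rather than $\ell$ requires the Cauchy–Schwarz/orthogonalization argument for the $c_i$ to be done carefully; a crude triangle-inequality bound would only give a factor $\ell\Lambda/\mu$.
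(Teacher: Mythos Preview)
Your outline is in the right spirit and could be completed, but it differs from the paper's argument in structure and leaves unresolved precisely the obstacle you flag at the end.

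The paper does not work with the Federer inequality $\innerprod{v}{y-x} \le \norm{y-x}^2/(2\reach)$ directly. For a single function $f$ it argues by contradiction: given $x$ with $x-p = n + \lambda\nabla_p f$ and $\norm{x-p}$ small, suppose some $q\ne p$ in the level set is at least as close to $x$. One connects $p$ to $q$ by a geodesic $\gamma$ on $M$ and Taylor-expands \emph{both} $\hat f = f\circ\gamma$ and $\hat g(t) = \tfrac12\norm{x-\gamma(t)}^2$ to second order along $\gamma$. The Hessian bound $\Lambda$ controls the $\hat f$ remainder; the second fundamental form of $M$, bounded by $1/\reach_M$, appears only through $\ddot\gamma$ in the $\hat g$ remainder. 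Eliminating the shared first-order term between the two expansions yields $\norm{x-p} \ge (1/\reach_M + \Lambda/\mu)^{-1}$. The multi-function case is then reduced to this single-function estimate by forming $\tilde f = \sum_i \alpha_i f_i$ with $\alpha_i = \lambda_i/\norm{\lambda}$: the singular-value hypothesis gives $\norm{\nabla\tilde f} \ge \mu$, and Cauchy--Schwarz on the Hessians gives $\norm{H\tilde f} \le \sqrt{k}\,\Lambda$, which is where the $\sqrt\ell$ appears.

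Your direct approach---split $v = v_M + v_\perp$ and bound each pairing with $y-x$ separately---is a genuine alternative, but the Taylor step as written is not yet a proof. Since $f_i$ is only defined on $M$, you cannot expand along the Euclidean chord $y-x$, and the expression $\tfrac12\innerprod{(y-x)}{Hf_i(y-x)}$ has no meaning without an extension of $f_i$ off $M$. If you expand along a geodesic instead, the first-order term becomes $s\innerprod{\nabla_x f_i}{\dot\gamma(0)}$ rather than $\innerprod{\nabla_x f_i}{y-x}$, and converting one to the other introduces a second-fundamental-form correction that, if handled naively, is double-counted against your $v_M$ estimate and spoils the constant. The paper's device of simultaneously expanding the squared-distance function along the same geodesic is exactly what separates the $1/\reach_M$ and $\Lambda/\mu$ contributions cleanly; you need that device, or an equivalent one, before the inequality $\innerprod{\nabla_x f_i}{y-x} \le \tfrac{\Lambda}{2}\norm{y-x}^2$ can be asserted with the stated constant.
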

	\noindent See \Cref{lem:regreachbound} for the full statement and proof of this result. 
	
	\subsection*{Related Work} Our construction of the index pair $(\GM{N},\GM{N}_-)$ for an isolated critical set $S$ is inspired by Milnor's construction for the case where $S$ is a critical point of a Morse function \cite[Secion I.3]{milnor_morse_1973}. Index pairs for isolated critical points of smooth functions have been thoroughly explored by Gromoll and Meyer  \cite{gromoll_differentiable_1969}; the work of Chang and Ghoussoub \cite{chang_conley_1996} provides a convenient dictionary between Conley's index pairs and a generalised version of these Gromoll-Meyer pairs. Also close in spirit and generality to our $(\GM{N},\GM{N}_-)$ are the {\em systems of Morse neighbourhoods} around arbitrary isolated critical sets in the recent work of Kirwan and Penington \cite{kp}.
	
	\subsection*{Outline} In \Cref{sec:conind} we briefly introduce index pairs and the Conley index. In \Cref{sec:constrindxpr}, we give an explicit construction of $(\GM{N}, \GM{N}_-)$. \Cref{sec:geometry} is devoted to proving Lemma (B). In \Cref{sec:hominfidxpr}, we specialise  the above results for regular intersections to our index pairs $(\GM{N}, \GM{N}_-)$ --- in particular, we derive a sufficient sampling density for the recovery of the Conley index and give a bound on the number of uniform independent point samples required to attain this density with high confidence.
	
	\section{Conley Index Preliminaries} \label{sec:conind}
	
	The definitions and results quoted in this section are sourced from Sections III.4 and III.5 of Conley's monograph \cite{conley_isolated_1978}; see also Mischaikow's survey \cite{mischaikow_conley_2002} for a gentler introduction to this material.\footnote{While the Conley index is defined for isolated invariant sets of any flow $\R \times M \to M$ or map $M \to M$, we have confined our presentation here to gradient flows as those are directly relevant to this paper.}
	
	Let $m \leq d$ be a pair of positive integers, and consider a closed $m$-dimensional Riemannian submanifold of $d$-dimensional Euclidean space $\R^d$. Throughout this paper, we fix a smooth function $f:M \to \R$ and denote by $\nabla_x f$ its gradient evaluated at a point $x \in M$. The {\em gradient flow} of $f$ is the solution $\sigma:\R \times M \to M$ to the initial value problem:
	\begin{equation}
	\pdv{\sigma(t,x)}{t} = -\nabla_x f  \qand \sigma(0,x) = x. \label{eq:gradflow}
	\end{equation}
	We call $x \in M$ a \emph{critical point} of $f$ if $\nabla_x f = 0$, whence $\sigma(\R, x) = x$. Let $\crit{f}$ denote the set of critical points of $f$, and $\ccrit{f}$ the set of compact connected components of $\crit{f}$. More generally, a subset $S \subset M$ is {\em invariant} under $\sigma$ whenever $\sigma(\R,S) \subset S$. We say that $S$ is \emph{isolated} if there exists a compact set $K \subset M$ such that $S$ is the maximal invariant set of $\sigma$ lying in the interior of $K$; explicitly, we must have
	\begin{align}\label{eq:isonbr}
	S = \set{x \in K \mid \sigma(\R,x) \subset K} \subset \interior{K}.
	\end{align} Any such $K$ is called an {\em isolating neighbourhood} of $S$.
	
	\begin{definition}\label{def:indpair} Let $N_- \subset N$ be  pair of compact subsets of $M$. We call $(N,N_-)$ an {\bf index pair} for the isolated invariant set $S$ if the following axioms are satisfied:
		\begin{enumerate}[start=1,label={\bfseries \upshape (IP\arabic*)}, ref={\bfseries \upshape IP(\arabic*)}]
			\item \label{IP1} the closure $\closure{N \setminus N_-}$ is an isolating neighbourhood of $S$;
			\item \label{IP2} the set $N_-$ is \emph{positively invariant} in $N$: that is, for any $(t,x) \in \R_{> 0} \times N_-$ with $\sigma([0,t],x) \subset N$, we have $\sigma([0,t],x) \subset N_-$;
			\item \label{IP3} the set $N_-$ is an \emph{exit set}; namely, if $\sigma(t,x) \notin N$ for $(t,x) \in \R_{>0} \times N$, then there exists some $s \in [0,t]$ with $\sigma([0,s],x) \subset N$ and $\sigma(s,x) \in N_-$.
		\end{enumerate}
	\end{definition}
	
	Every isolated invariant set $S$ of $\sigma$ admits an index pair $(N,N_-)$ --- see \cite[Section III.4]{conley_isolated_1978} for a proof. The content of \cite[Section III.5]{conley_isolated_1978} is that if $(L,L_-)$ is any other index pair for $S$, then the pointed homotopy types of $N/N_1$ and $L/L_1$ coincide. As a result, the relative integral homology groups $\HG_\bullet(N,N_-)$ and $\HG_\bullet(L,L_-)$ are isomorphic and the following notion is well-defined.
	
	\begin{definition}
		The (homological) {\bf Conley index} of an isolated invariant set $S$, denoted $\Con_\bullet(S)$, is the relative homology $\HG_\bullet(N,N_-)$ of any index pair for $S$.
	\end{definition}
	
	It follows immediately from the additivity of homology that if $S$ decomposes as a disjoint union $\coprod_i S_i$ of isolated invariant subsets, then $\Con_\bullet(S)$ is isomorphic to the direct sum $\bigoplus_i\Con_\bullet(S_i)$. Therefore, it suffices to restrict attention to the case where $S$ is connected. In this paper we consider only the special case $S \in \ccrit{f}$, i.e., isolated invariant sets which are compact, connected and critical. It follows that the restriction of $f$ to $S$ is constant, and we will assume henceforth (without loss of generality) that $f(S) = 0$.
	
	\section{Constructing Index Pairs }
	\label{sec:constrindxpr}
	Let $M \subset \R^d$ be a closed Riemannian submanifold and $f:M \to \R$ a smooth function. We consider a (connected, compact) critical set $S \in \ccrit{f}$ with $f(S) = 0$ and isolating neighbourhood $K$, as described in \cref{eq:isonbr}. For each $x$ in $M$, we write $H_xf$ to denote the Hessian matrix of second partial derivatives of $f$ evaluated at $x$. Our goal in this section is to explicitly construct an index pair for $S$ in the sense of Definition \ref{def:indpair}.
	
	\begin{definition}\label{def:g}
		A smooth\footnote{Since $K$ has a boundary, we mean that $g$ is smooth on an open subset of $M$ containing $K$.} map $g:K \to \R$ is called a {\bf bounding function} for $S$ if there exists a pair of real numbers $r_0 < r_1$ for which the following properties hold:
		\begin{enumerate}[label={\bfseries \upshape (G\arabic*)}]
			\item $S \subset \sublevelopen{g}{r_0}$ ; \label{G1}
			\item $\sublevel{g}{r_1}$ is compact; \label{G2}
			\item $S = \sublevel{g}{r_1} \cap \crit{f}$; \label{G3}
			\item $\intlevel{g}{r_0}{r_1} \cap \crit{g} = \emptyset$ \label{G4}.
		\end{enumerate}
		We call $[r_0,r_1]$ a {\em regularity interval} for the bounding function $g$.
	\end{definition}
	
	Bounding functions always exist for isolated sets in $\ccrit{f}$ --- one convenient choice is furnished by the normsquare of the gradient $\nabla f$ of $f$, 
	\begin{lemma} The function $g:K \to \R$ given by $g(x) = \norm{\nabla_xf}^2$ is a bounding function for $S$.
		\begin{proof}
			Writing $\partial K$ for the boundary of $K$ in $M$, set $s = \sup_{x \in \partial K} \norm{\nabla_xf}^2$ and note that $s > 0$ because $K$ is an isolating neighbourhood of $S$. Since critical sets of smooth functions are closed, and $K$ -- by virtue of being an isolating neighbourhood -- is compact, we have that $\crit{g} \cap K$ is compact. As $g$ is continuous, $g(\crit{g})$ is compact in $\R$, and thus the regular values of $g$ are open in $\R$. Applying Sard's theorem, regular values of $g$ then form an open dense subset of $[0,s]$. Consequently, there exists an interval of regular values $[r_0, r_1] \subset (0, s]$ of $g$. Since $r_0 > 0$, and $S$ is the only set of critical points in $K$, \Cref{G1,G2,G3,G4} of Definition \Cref{def:g} are trivially satisfied. 
		\end{proof}
	\end{lemma}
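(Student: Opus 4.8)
The plan is to produce an explicit regularity interval $[r_0,r_1]\subset(0,\infty)$ for $g=\norm{\nabla f}^2$ and then verify the four axioms (G1)--(G4) in turn. I would start from the elementary observations that $g$ is smooth and nonnegative on a neighbourhood of $K$ and that it vanishes precisely on $\crit{f}$; since $S$ is a component of $\crit{f}$, this already gives $g|_S\equiv 0$, which makes (G1) automatic for any positive $r_0$.

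The one substantive input is an explicit description of $\crit{f}\cap K$. If $x\in K$ is a critical point of $f$, then its gradient trajectory $\sigma(\R,x)=\set{x}$ never leaves $K$, so $x$ lies in the maximal invariant subset of $\interior{K}$ --- namely $S$ --- by \cref{eq:isonbr}; conversely $S\subseteq\crit{f}\cap K$ since $S\in\ccrit{f}$. Hence $\crit{f}\cap K=S$, and in particular $\partial K$ contains no critical point of $f$, so that $s:=\min_{x\in\partial K}\norm{\nabla_xf}^2$ is strictly positive by compactness of $\partial K$. (In the degenerate case $\partial K=\emptyset$ one simply sets $s=+\infty$ and the constraint $r_1<s$ below becomes vacuous.)

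To locate the interval I would appeal to Sard's theorem applied to the smooth function $g$: the set $g(\crit{g}\cap K)$ of critical values has Lebesgue measure zero in $\R$, and since $\crit{g}\cap K$ is a closed subset of the compact set $K$, this set of critical values is compact, hence closed. Its complement is therefore open and dense, so it meets the open interval $(0,s)$ in a nonempty open set; that set contains a nondegenerate closed subinterval, and I fix $0<r_0<r_1<s$ inside it.

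It then remains to check the axioms against this choice. Axiom (G1) holds because $g|_S\equiv 0<r_0$. For (G2), the inequality $r_1<s=\min_{x\in\partial K}\norm{\nabla_xf}^2$ forces $\sublevel{g}{r_1}\subseteq\interior{K}$, and a closed subset of the compact set $K$ is compact. For (G3), $\sublevel{g}{r_1}\cap\crit{f}\subseteq K\cap\crit{f}=S$ by the second paragraph, while $S\subseteq\sublevel{g}{r_1}$ because $g|_S=0$ and $S\subseteq\crit{f}$; hence the two sides agree. Finally (G4) holds since $[r_0,r_1]$ was chosen to avoid every critical value of $g$, so no point of $\intlevel{g}{r_0}{r_1}$ can be critical for $g$. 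I expect no genuine obstacle here: the argument merely packages Sard's theorem together with the fact that an isolating neighbourhood carries no stray critical points on its boundary, and the only point that demands a little care is the compactness assertion in (G2), which is exactly why $r_1$ is taken strictly below $s$ rather than merely positive.
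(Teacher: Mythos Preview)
Your proof is correct and follows essentially the same approach as the paper: invoke Sard's theorem together with compactness of $\crit{g}\cap K$ to locate an interval $[r_0,r_1]\subset(0,s)$ of regular values of $g$, then verify (G1)--(G4). You are in fact more explicit than the paper in justifying $\crit{f}\cap K=S$ and in checking each axiom individually; the only cosmetic difference is that the paper sets $s=\sup_{\partial K}g$ rather than your $s=\min_{\partial K}g$, but neither choice materially affects the argument since (G2) already follows from $\sublevel{g}{r_1}$ being closed in the compact set $K$.
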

	
	We further assume knowledge of the following numerical data.
	\begin{assumption} For a given bounding function $g:K \to \R$ for $S$, we assume:
		\begin{enumerate}[label={\bfseries \upshape (G\arabic*)}, start = 5]
			\item \label{G5} There is a constant $q_0  >0 $ so that the inequality
			\[
			\ratiograd{f}{g} \frac{(r_1 - r_0)}{2} \geq q_0
			\] holds on $\intlevel{g}{r_0}{r_1}$;
			\item \label{G6} There are regular values $(\alpha, r)$ and $(\alpha, s)$ of $(f,g): K \to \R^2$ satisfying
			\begin{align*}
			0 < \alpha \leq \frac{q_0}{2}  \qand
			\frac{r_0+r_1}{2} < s \leq r < r_1.
			\end{align*}
		\end{enumerate}
	\end{assumption}
	
	\begin{remark} When using $g = \norm{\nabla f}^2$, we have $\nabla g = 2 \cdot Hf \cdot \nabla f$, where $Hf$ is the Hessian of $f$. As long as this Hessian remains nonsingular on $\intlevel{g}{r_0}{r_1}$, the two assumptions above are readily satisfied. In particular, we can rephrase \Cref{G5} to the statement that
		\[
		\frac{\norm{Hf \cdot \nabla f}}{\norm{\nabla f}} \leq \frac{(r_1-r_0)}{q_0};
		\]
		Since the left side is bounded by the operator norm of $Hf$, if we set 
		\[
		b = \inf_{g(x) \in [r_0,r_1]}\|H_xf\|
		\] then any $q_0 \leq \frac{(r_1-r_0)}{b}$ suffices. Similarly, the function $(f,\norm{\nabla f}^2):K \to \R^2$ is singular at $x \in K$ if and only if $\nabla_xf$ is an eigenvector of $H_xf$, so points of $K$ chosen at random will generically be regular.
	\end{remark}
	
	A regularity interval $[r_0,r_1]$ for $g$ may be used to construct a smooth step function which decreases from $q_0$ to $0$. In turn, the function $q$ facilitates the construction of a local perturbation of $f$ near $S$, which we call $h$. This perturbation $h$ is the last piece of information required to construct an index pair for $S$.
	
	\begin{definition}\label{def:qh}
		The {\bf step function} $q:\R \to \R_{\geq 0}$ is defined as
		\begin{align} \label{eq:bump}
		q(t) = \begin{cases}
		q_0 & t < r_0\\
		{q_0}\qty(1 + \exp(\frac{r_1-r_0}{r_1-t} + \frac{r_1-r_0}{r_0-t}))^{-1}& t \in [r_0,r_1] \\
		0 &  t > r_1
		\end{cases},
		\end{align}
		and the {\bf $q$-perturbation} of $f$ is the smooth function $h:M \to \R$ given by
		\begin{align}\label{eq:GMh}
		h(x) = \begin{cases}
		f(x) + q\left(g(x)\right) & \text{if }x \in K \\
		f(x) & \text{otherwise}
		\end{cases}.
		\end{align}
		\Cref{H3} of \Cref{lem:hdef} shows that $h$ is smooth despite its piecewise definition: $f$ only disagress with $h$ on a strict subset of the interior of $K$ .
	\end{definition}
	
	Define the constants
	\begin{align}\label{eq:betagamma}
	\beta = \alpha + q(r) \qand
	\gamma = \alpha + q(s),
	\end{align}
	where $\alpha$ has been chosen in \Cref{G5} while $r$ and $s$ are chosen in \Cref{G6}. Let $(\GM{N},\GM{N}_-)$ be the pair of spaces given by
	\begin{align}
	\GM{N} = \sublevel{f}{\alpha} \cap \suplevel{h}{\beta} \qand \GM{N}_- =  \sublevel{f}{\alpha} \cap \intlevel{h}{\beta}{\gamma}. \label{eq:GMM}
	\end{align}
	(Note that $\GM{N}_- \subset \GM{N} \subset K$ holds by construction). Here is the main result of this section.
	
	\begin{figure}
		\centering
		\begin{subfigure}[t]{0.95\textwidth}
			\centering
			\includegraphics{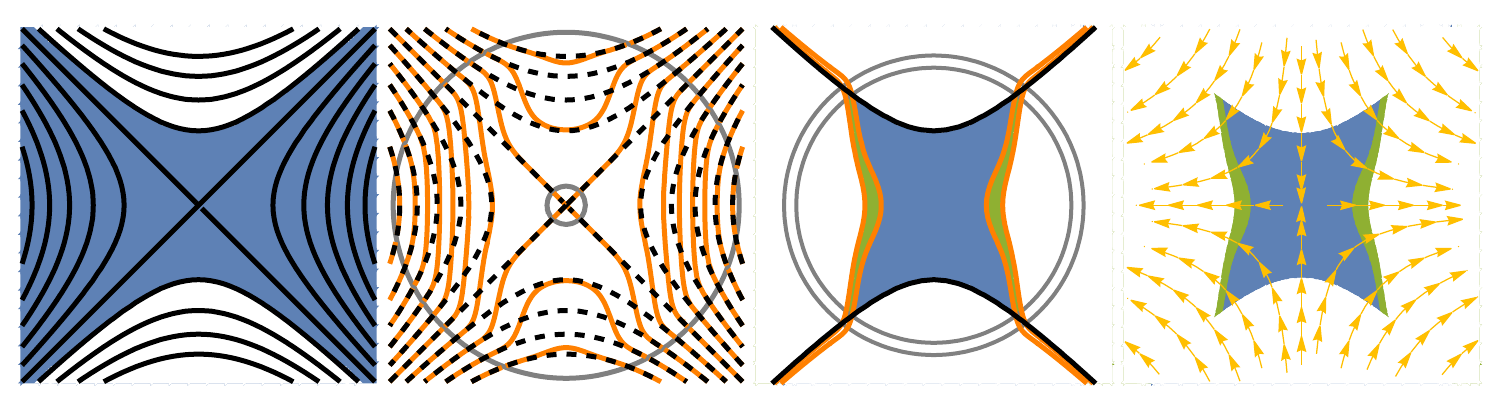}
			\caption{$f = -x^2+ y^2$ with $S = 0$}
		\end{subfigure}
		\begin{subfigure}[t]{0.95\textwidth}
			\centering
			\includegraphics{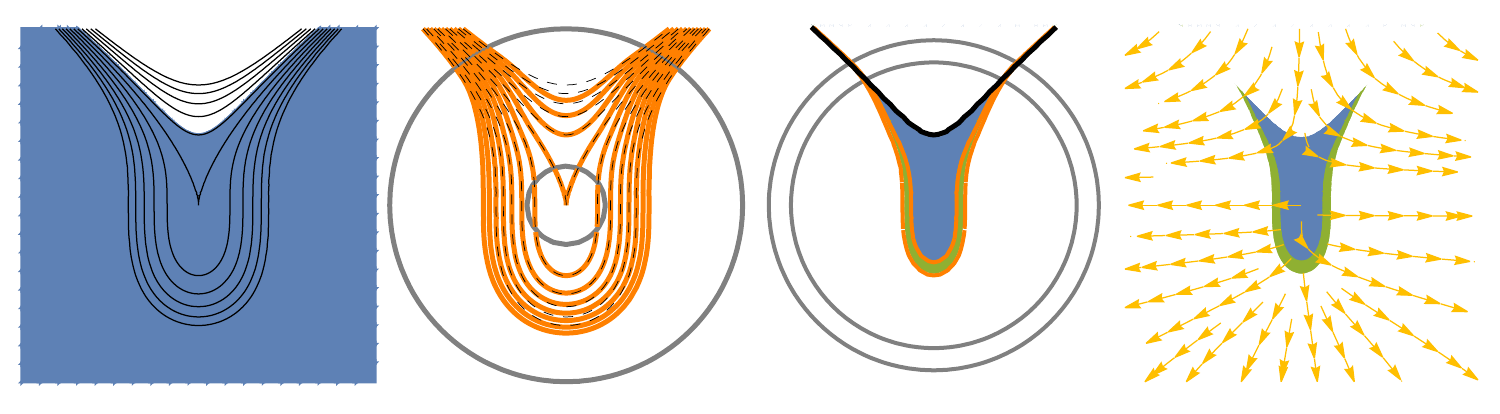}
			\caption{$f = -x^2+ y^3$ with $S = 0$}
		\end{subfigure}
		\caption{Left: the sublevel set  $\sublevel{f}{\alpha}$; Centre left: orange contour lines correspond to those of $h$, black dashed contour lines correspond to those of $f$, and grey contour lines correspond to $g = r_0$ and $g = r_1$. Note how outside $\sublevel{g}{r_1}$, the functions $f$ and $h$ coincide, while within $\sublevel{g}{r_1}$, the local addition of a non-zero $q(g)$ perturbation causes the contours of $h$ to deform and deviate away from $f$. Centre right: an example of $(\GM{N}, \GM{N}_-)$. The green region corresponds to $\GM{N}_-$ and the union of the green and blue regions constitute $\GM{N}$. The orange lines are contour lines of $h = \beta$ and $h = \gamma$; the black lines are the contour lines of $f = \alpha$; and the grey lines are contour lines of $g = r$ and $g = s$. Right: $(\GM{N}, \GM{N}_-)$ depicted with a stream plot of the flow along $-\nabla_x f$ superimposed.}
		\label{fig:indexpair}
	\end{figure}
	
	\begin{theorem} \label{thm:indexpair} The pair $(\GM{N}, \GM{N}_-)$ from \cref{eq:GMM} is an index pair for $S$.
	\end{theorem}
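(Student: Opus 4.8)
The plan is to verify the three axioms \ref{IP1}--\ref{IP3} in turn, exploiting the fact that $h = f$ outside $\sublevel{g}{r_1}$ and that the perturbation $q\circ g$ only adds a nonnegative quantity, vanishing outside the regularity interval. Set $A := \sublevel{f}{\alpha}$, so that $\GM{N} = A \cap \suplevel{h}{\beta}$ and $\GM{N}_- = A \cap \intlevel{h}{\beta}{\gamma}$; note $\closure{\GM{N}\setminus\GM{N}_-} = A \cap \suplevelopen{h}{\gamma}$ (up to the regular-value care that makes the level sets manifolds-with-corners). Throughout I will use that $\alpha \le q_0/2$ and $(r_0+r_1)/2 < s \le r < r_1$ from \ref{G5}--\ref{G6}, that $q$ is monotone decreasing with $q(r_1)=0$ and $q(t)=q_0$ for $t\le r_0$, and the quantitative gradient comparison \ref{G5}.

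First, for \ref{IP1}: I would show $\closure{\GM{N}\setminus\GM{N}_-}$ is an isolating neighbourhood of $S$. Since $\alpha>0=f(S)$ and $g<r_0$ on $S$ (so $h = f+q_0$ there, and one checks $\beta < f(S)+q_0 < \gamma$ fails — rather, one checks $h(S) = q_0 > \gamma$), $S$ lies in the interior. The key is that no full flow line of $-\nabla f$ other than those converging to $S$ stays inside this compact set: along any trajectory not limiting to $S$, the value $f$ strictly decreases and will eventually pierce $f=\alpha$, or else the trajectory stays in a region where $f$ is bounded but $\nabla f\ne 0$ and must exit. The maximality of $S$ as the invariant set in $K$, together with $\closure{\GM{N}\setminus\GM{N}_-}\subset K$, does most of the work; I expect this axiom to be the mildest.

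Next, \ref{IP2} (positive invariance of $\GM{N}_-$ in $\GM{N}$) and \ref{IP3} (exit-set property) are the heart of the matter, and here the role of $h$ and the constant $q_0$ becomes essential. The governing observation is to compute the derivative of $h$ along the $f$-gradient flow: $\frac{d}{dt} h(\sigma(t,x)) = \innerprod{\nabla h}{-\nabla f}$. Outside $\sublevel{g}{r_1}$ this equals $-\norm{\nabla f}^2 \le 0$, so $h$ decreases; inside the regularity interval $\nabla h = \nabla f + q'(g)\nabla g$, and I would use \ref{G5} — precisely the inequality $\ratiograd{f}{g}\,\tfrac{r_1-r_0}{2} \ge q_0$, which bounds $|q'| \le \tfrac{2q_0}{r_1-r_0}$ against $\ratiograd{f}{g}$ — to show that the $-\norm{\nabla f}^2$ term dominates $-q'(g)\innerprod{\nabla g}{\nabla f}$, so that $h$ is still nonincreasing along the flow wherever $g\in[r_0,r_1]$. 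Hence $h$ is monotone nonincreasing along $f$-trajectories on all of $K$. This monotonicity immediately gives \ref{IP2}: a point with $h\le\gamma$ that stays in $\GM{N}$ (where $h\ge\beta$) can only have $h$ decrease, so it remains in $\{ \beta \le h \le \gamma\} \cap A = \GM{N}_-$ — one also checks the $f\le\alpha$ constraint is preserved since $f$ decreases too. For \ref{IP3}, a trajectory leaving $\GM{N} = A \cap \suplevel{h}{\beta}$ must do so by violating $f\le\alpha$ or $h\ge\beta$; since $f$ and $h$ both decrease along the flow, once inside $A$ it cannot leave via $f=\alpha$, so it must cross $h=\beta$, and at that crossing (still inside $A$, still with $h\le\gamma$ by monotonicity from a point where $h\ge\beta$) it lies in $\GM{N}_-$; this is exactly the exit condition. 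The separation $\beta < \gamma$, i.e. $q(r)<q(s)$, guaranteed by $s<r$ and strict monotonicity of $q$ on $(r_0,r_1)$, ensures $\GM{N}_-$ is a genuine ``collar'' thick enough to catch all exits.

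The main obstacle I anticipate is the quantitative estimate in \ref{IP2}/\ref{IP3}: showing $h$ is nonincreasing along the flow throughout the region $g\in[r_0,r_1]$, where $\nabla g$ and $\nabla f$ need not be aligned and $q'(g)$ can be sizeable. This is where \ref{G5} is designed to be invoked — one must bound $|q'(g)\cdot \innerprod{\nabla g}{\nabla f}| \le |q'(g)|\cdot\norm{\nabla g}\cdot\norm{\nabla f} \le \tfrac{2q_0}{r_1-r_0}\cdot\norm{\nabla g}\cdot\norm{\nabla f}$ and compare with $\norm{\nabla f}^2$, which reduces exactly to the hypothesis $\norm{\nabla f}/\norm{\nabla g} \ge \tfrac{2q_0}{r_1-r_0}$ on the regularity interval. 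A secondary technical point is confirming, using the specific numerical bounds $\alpha \le q_0/2$ and $s \le r < r_1$, that the levels satisfy $h(S) = q_0 > \gamma > \beta > \alpha$ so that $S$ sits strictly inside $\closure{\GM{N}\setminus\GM{N}_-}$ and the pair is nondegenerate; this is bookkeeping with the definitions $\beta = \alpha+q(r)$, $\gamma = \alpha+q(s)$ but should be checked carefully.
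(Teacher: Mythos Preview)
Your approach is essentially the same as the paper's. The paper isolates the monotonicity of $h$ along the $f$-flow as property \ref{H4} of Lemma~\ref{lem:hdef}, proved via exactly the Cauchy--Schwarz estimate and the bound $|q'|\le \tfrac{2q_0}{r_1-r_0}$ against \ref{G5} that you describe; it then deduces \ref{IP2} and \ref{IP3} in Lemma~\ref{lem:IP23} by the same ``$f$ cannot increase, so exit must occur through $h=\beta$'' argument you give.

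One point where you are too casual: you call \ref{IP1} ``the mildest'' axiom, but the paper in fact devotes the most care to it (Lemma~\ref{lem:IP1}). You assume $\closure{\GM{N}\setminus\GM{N}_-}$ is compact without justification; the paper establishes this by showing $\GM{N}\subset\sublevel{g}{r}$ (from $h\ge\beta$ and $f\le\alpha$ one gets $q(g)\ge\beta-\alpha=q(r)$, hence $g\le r$), and then invokes \ref{G2}. Your maximality argument via $\closure{\GM{N}\setminus\GM{N}_-}\subset K$ is fine once this containment is secured, but the containment itself is the substantive step you skipped.
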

	
	\begin{proof}
		We show that \Cref{IP1} is satisfied in \Cref{lem:IP1}, and \Cref{IP2,IP3} are satisfied in \Cref{lem:IP23} below.
	\end{proof}
	
	Before proceeding to the Lemmas which establish \Cref{thm:indexpair}, we outline some relevant features of the function $h$ from \cref{eq:GMh}.
	
	\iffalse
	\begin{figure}
		\centering
		\includegraphics[width = 0.65\textwidth]{bump.png}
		\caption{\label{fig:bump} A graph of the function $q(t)$ in \cref{eq:bump}.}
	\end{figure}
	\fi
	
	\begin{lemma} \label{lem:hdef} The function $h:M \to \R$ satisfies the following properties:
		\begin{enumerate}[start=1,label={\bfseries \upshape (H\arabic*)}]
			\item \label{H1} $f(x) \leq h(x)$;
			\item \label{H2} $g(x) \leq r_0 \iff h(x) = f(x) + q(0)$;
			\item \label{H3} $g(x)  \geq r_1 \iff h(x) = f(x)$; 
			\item \label{H4} $\expval{\nabla {h}, \nabla{f}} \geq 0$ with equality only attained on $\crit{f}$; and,
			\item \label{H5} $\crit{h} = \crit{f}$.
		\end{enumerate}
	\end{lemma}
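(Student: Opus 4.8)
The strategy is to reduce everything to properties of the one–variable step function $q$ of \cref{eq:bump}, since $h-f$ equals $q\circ g$ on $K$ and vanishes on $M\setminus K$. Reading these properties off \cref{eq:bump}: the function $q$ is smooth and $\R_{\ge 0}$–valued; it is identically $q_0$ on $(-\infty,r_0]$ and identically $0$ on $[r_1,\infty)$; it is non–increasing, so $q'\le 0$ everywhere and $q'$ is supported in the open interval $(r_0,r_1)$; and $\max_{t}\magn{q'(t)}=\tfrac{2q_0}{r_1-r_0}$, attained at the midpoint $t=\tfrac{r_0+r_1}{2}$. Only the last of these is not pure bookkeeping: after the affine substitution $t=r_0+u(r_1-r_0)$ it becomes the claim that $\max_{u\in(0,1)}\magn{\psi'(u)}=2$ for $\psi(u)=\bigl(1+e^{w(u)}\bigr)^{-1}$ with $w(u)=\tfrac{1}{1-u}-\tfrac1u$, and from the identity $\psi'=-w'\,\psi(1-\psi)$ one locates the maximum at $u=\tfrac12$ using the bound $\psi(1-\psi)\le\tfrac14$, the symmetry $u\leftrightarrow 1-u$, and the fact that $\psi(1-\psi)\to 0$ exponentially fast near the endpoints while $w'$ only blows up polynomially. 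I expect this verification to be the one genuinely computational step; note that the factor $\tfrac{r_1-r_0}{2}$ in hypothesis \textup{(G5)} is exactly $q_0/\max|q'|$, so (G5) is tailor-made for the argument below.

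Granting these facts about $q$ (and recalling that, for $h$ to be well defined, $\sublevelopen{g}{r_1}$ must lie in $\interior K$): property \textup{(H1)} is immediate from $q\ge 0$. For \textup{(H2)}, since $0<r_0$ we have $q(0)=q_0$; a point $x$ with $g(x)\le r_0$ lies in $\interior K$, where $h(x)=f(x)+q(g(x))=f(x)+q_0=f(x)+q(0)$, and conversely $h(x)=f(x)+q(0)=f(x)+q_0$ forces $q(g(x))=q_0$, hence $x\in K$ and $g(x)\le r_0$, while for $x\notin K$ we simply have $h(x)=f(x)\ne f(x)+q_0$. For \textup{(H3)}, $h(x)=f(x)$ iff $q(g(x))=0$ iff $g(x)\ge r_1$; the asserted smoothness of $h$ comes out along the way, since $\set{x:h(x)\ne f(x)}$ is contained in $\sublevelopen{g}{r_1}$, whose closure is — by \textup{(G2)} and the choice of regularity interval — a compact subset of $\interior K$, so that $h$ coincides with the smooth function $f$ on the open set $M\setminus\overline{\sublevelopen{g}{r_1}}\supseteq M\setminus\interior K$ and with the smooth function $f+q\circ g$ on $\interior K$, and these two open sets cover $M$.

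For \textup{(H4)}, differentiating $h=f+q\circ g$ on $K$ gives $\nabla h=\nabla f+q'(g)\nabla g$, hence $\innerprod{\nabla h}{\nabla f}=\norm{\nabla f}^2+q'(g)\innerprod{\nabla g}{\nabla f}$ (and simply $\norm{\nabla f}^2$ on $M\setminus K$). Wherever $q'(g(x))=0$ — in particular at every point not in $\intlevelopen{g}{r_0}{r_1}$ — this equals $\norm{\nabla f}^2\ge 0$, which vanishes exactly on $\crit f$. At the remaining points, i.e.\ $x\in\intlevelopen{g}{r_0}{r_1}$, hypotheses \textup{(G1)} and \textup{(G3)} exclude critical points of $f$ and \textup{(G4)} excludes critical points of $g$, so $\norm{\nabla f},\norm{\nabla g}>0$; Cauchy–Schwarz yields $\innerprod{\nabla h}{\nabla f}\ge\norm{\nabla f}\bigl(\norm{\nabla f}-\magn{q'(g(x))}\,\norm{\nabla g}\bigr)$, and since $\magn{q'(g(x))}\le\tfrac{2q_0}{r_1-r_0}$ while \textup{(G5)} gives $\norm{\nabla f}\ge\tfrac{2q_0}{r_1-r_0}\norm{\nabla g}$ on $\intlevel{g}{r_0}{r_1}$, the bracket is nonnegative; equality throughout would require $g(x)=\tfrac{r_0+r_1}{2}$, equality in \textup{(G5)} at $x$, and $\nabla f(x)$ a positive multiple of $\nabla g(x)$ — a configuration excluded once $q_0$ is chosen strictly below $\tfrac{r_1-r_0}{2}\min_{\intlevel{g}{r_0}{r_1}}\ratiograd{f}{g}$, a positive quantity by compactness of $\intlevel{g}{r_0}{r_1}$ and the non-vanishing just noted. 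Thus $\innerprod{\nabla h}{\nabla f}\ge 0$ with equality only on $\crit f$. Finally \textup{(H5)} drops out: if $\nabla h(x)=0$ then $\innerprod{\nabla h(x)}{\nabla f(x)}=0$, so \textup{(H4)} puts $x$ in $\crit f$; conversely, if $x\in\crit f$ then either $x\notin K$ and $\nabla h(x)=\nabla f(x)=0$, or $x\in K$, in which case \textup{(G1)} and \textup{(G3)} force $g(x)\notin[r_0,r_1]$, so $q'(g(x))=0$ and $\nabla h(x)=\nabla f(x)+q'(g(x))\nabla g(x)=0$.
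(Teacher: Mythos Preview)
Your proof is correct and follows essentially the same approach as the paper: compute $\nabla h=\nabla f+q'(g)\nabla g$, bound $\langle\nabla h,\nabla f\rangle$ via Cauchy--Schwarz and the estimate $\max|q'|=\tfrac{2q_0}{r_1-r_0}$, then invoke \textup{(G5)}. You are in fact more careful than the paper in two respects: you spell out why $h$ is globally smooth, and you correctly isolate the equality case in \textup{(H4)} (the paper writes ``$>0$'' citing \textup{(G5)}, but \textup{(G5)} is stated with a non-strict inequality, so strict positivity on $\intlevel{g}{r_0}{r_1}$ genuinely requires the observation you make about choosing $q_0$ strictly below the minimum).
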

	\begin{proof}
		The only properties here which don't follow directly from \Cref{def:qh} are \Cref{H4} and \Cref{H5}. For \Cref{H4}, note by \cref{eq:GMh} that $\nabla{h} = \nabla{f}$ holds outside $\intlevel{g}{r_0}{r_1}$ since the derivative of $q$ vanishes in this region. So we consider $x \in \intlevel{g}{r_0}{r_1}$, and calculate
		\[
		\nabla_xf = \nabla_xf + q'(g(x)) \cdot \nabla_xg.
		\]
		It is readily checked that $|q'(t)|$ is maximised at $t=\frac{r_0+r_0}{2}$, where its value is $\frac{2q_0}{r_1-r_0}$. 
		Therefore,
		\begin{align*}
		\expval{\nabla_x{h}, \nabla_x{f}} &= \expval{\nabla_x{f}, \nabla_x{f}} + q'(g(x)) \expval{\nabla_x{g}, \nabla_x{f}}   \\
		& \geq  \norm{\nabla_x{f}} \norm{\nabla_x{g}}\qty(\ratiograd{{}_xf}{{}_xg} - \abs{q'(g(x))}) & \qq{Cauchy-Schwarz, \cref{G4}} \\
		& \geq  \norm{\nabla_x{f}} \norm{\nabla_x{g}}\qty(\ratiograd{{}_xf}{{}_xg} -\frac{2q_0}{r_1-r_0}) & \qq{\text{Bound on }$|q'|$}\\
		&  > 0 & \qq{\cref{G5}}
		\end{align*}
		As a consequence of \cref{H4}, we know that $\nabla{h} \neq 0$ on the set $\intlevel{g}{r_0}{r_1}$. Since $\nabla{h} = \nabla{f}$ whenever $g \leq r_0$ or $g \geq r_1$, have $\crit{f} = \crit{h}$, as required by \Cref{H5}.
	\end{proof}
	
	The next result forms the first step in our proof of \Cref{thm:indexpair}.
	
	\begin{lemma} \label{lem:IP1} The closure  $\closure{\GM{N} \setminus \GM{N}_-}$ is an isolating neighbourhood of $S$.
	\end{lemma}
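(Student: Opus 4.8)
The plan is to verify, for the set $A:=\closure{\GM{N}\setminus\GM{N}_-}$, the three requirements packaged into the definition of an isolating neighbourhood in \cref{eq:isonbr}: that $A$ is compact, that $S\subseteq\interior{A}$, and that $S$ equals the largest invariant subset of $A$. The first and third are essentially formal; the genuine content is concentrated in the containment $S\subseteq\interior{A}$.

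For compactness, note that $\GM{N}=\sublevel{f}{\alpha}\cap\suplevel{h}{\beta}$ is closed, being an intersection of closed sub- and super-level sets, and that $\GM{N}\subseteq K$: if $x\in\GM{N}$ then $f(x)\le\alpha$ and $h(x)\ge\beta$, so if $x$ were outside $K$ then $h(x)=f(x)$ by \cref{eq:GMh}, forcing $\alpha\ge f(x)=h(x)\ge\beta=\alpha+q(r)>\alpha$ (here $q(r)>0$ because $r<r_1$ by \Cref{G6}), which is absurd. Hence $A\subseteq\closure{\GM{N}}=\GM{N}\subseteq K$, so $A$ is a closed subset of the compact set $K$ and therefore compact. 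For the maximal-invariant-set condition, observe that $S\in\ccrit{f}$ consists of fixed points of $\sigma$ and is therefore invariant; combined with $S\subseteq A$ (which follows from $S\subseteq\interior{A}$, proved below) this shows $S$ is an invariant subset of $A$. Conversely, if $x\in A$ has $\sigma(\R,x)\subseteq A\subseteq K$, then $x\in S$ because $K$ is an isolating neighbourhood of $S$ and \cref{eq:isonbr} applies to $x$ verbatim. Thus $S=\set{x\in A\mid\sigma(\R,x)\subseteq A}$, and the lemma is reduced to the claim $S\subseteq\interior{A}$.

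To establish $S\subseteq\interior{A}$ I would trap $S$ inside the explicit open set $U:=\sublevelopen{f}{\alpha}\cap\suplevelopen{h}{\gamma}$. Since $s\le r$ and $q$ is non-increasing, $\gamma=\alpha+q(s)\ge\alpha+q(r)=\beta$; hence on $U$ the inequalities $f\le\alpha$ and $h>\gamma\ge\beta$ force $U\subseteq\GM{N}$, while $h>\gamma$ rules out membership in $\GM{N}_-=\sublevel{f}{\alpha}\cap\intlevel{h}{\beta}{\gamma}$, so $U\subseteq\GM{N}\setminus\GM{N}_-\subseteq A$. It remains to see $S\subseteq U$. On $S$ we have $f\equiv 0<\alpha$ since $\alpha>0$ by \Cref{G6}, and $S\subseteq\sublevelopen{g}{r_0}$ by \Cref{G1}, so the step function satisfies $q\circ g\equiv q_0$ on $S$, giving $h\equiv f+q_0\equiv q_0$ there. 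Thus $S\subseteq U$ follows once $q_0>\gamma=\alpha+q(s)$ is known. A short computation with \cref{eq:bump} shows that $q$ is strictly decreasing on $(r_0,r_1)$ with $q\!\left(\tfrac{r_0+r_1}{2}\right)=\tfrac{q_0}{2}$; since $\tfrac{r_0+r_1}{2}<s<r_1$ by \Cref{G6} this gives $q(s)<\tfrac{q_0}{2}$, and together with $\alpha\le\tfrac{q_0}{2}$ (again \Cref{G6}) we conclude $\gamma<q_0$. Therefore $S\subseteq U\subseteq A$ with $U$ open, so $S\subseteq\interior{A}$.

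I expect the only real obstacle to be this concluding estimate $\gamma<q_0$ — that is, pinning down the value and monotonicity of the step function $q$ on its transition interval and combining these with the size constraints on $\alpha$ and $s$ from \Cref{G6}. Everything else is bookkeeping with sub-, super-, and inter-level sets of $f$ and $h$, together with the already-available fact that $K$ isolates $S$.
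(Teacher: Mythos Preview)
Your proof is correct and follows the same three-part skeleton as the paper's proof (compactness, $S\subseteq\interior{A}$, maximality), with the central step $S\subseteq\sublevelopen{f}{\alpha}\cap\suplevelopen{h}{\gamma}$ and the estimate $\gamma<q_0$ handled identically. There are two minor but noteworthy differences in the peripheral steps. For compactness, the paper derives the sharper containment $\GM{N}\subseteq\sublevel{g}{r}$ by comparing $q(g(x))\ge\beta-\alpha=q(r)$ and invoking the monotonicity of $q$, whereas you short-circuit this by observing directly from the piecewise definition of $h$ that $\GM{N}\subseteq K$. For maximality, the paper argues that $S=\GM{N}\cap\crit{f}$ and then rules out connecting orbits via $f(S)=0$; your argument is cleaner, simply noting that any full orbit in $A$ lies in $K$ and hence in $S$ by the defining property \cref{eq:isonbr} of the isolating neighbourhood $K$. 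Your route avoids re-proving what $K$ already certifies; the paper's route yields the incidental fact $\GM{N}\subseteq\sublevel{g}{r}$, which is not needed here but clarifies the geometry.
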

	\begin{proof} Before verifying \cref{eq:isonbr} with $K := \closure{\GM{N} \setminus \GM{N}_-}$, we first check that the closed set $K$ is compact by confirming that the ambient set $\GM{N}$ is bounded. To this end, note that for any $x \in \GM{N}$, we have:
		\begin{align*}
		h(x) &= f(x) + q(g(x)) & \text{by \cref{eq:GMh}} \\
		&\leq \alpha + q(g(x)) & \text{by  \cref{eq:GMM}}
		\end{align*}
		A second appeal to \cref{eq:GMM} gives $h(x) \geq \beta$ for $x \in \GM{N}$, whence $\beta-\alpha \leq q(g(x))$. But $\beta-\alpha = q(r)$ by \cref{eq:betagamma} and $q$ is strictly decreasing on $[r_0,r_1]$, which forces $g(x) \leq r \leq r_1$. Thus $\GM{N}$ lies within $\sublevel{g}{r_1}$, which is compact by \Cref{G2} and hence bounded.
		
		Next we establish that $S$ lies in the interior of $K$ by showing that $S \subset \sublevelopen{f}{\alpha} \cap \suplevelopen{h}{\gamma}$. For this purpose, note that $f(S) = 0$ by assumption and $\alpha > 0$ by \Cref{G5}, so $S \in \sublevel{f}{\alpha}$ is immediate. And since $S \subset \sublevel{g}{r_0}$ by \Cref{G2}, we have from \Cref{H2} that $h(S) = q_0$. Now, 
		\begin{align*}
		\gamma &= \alpha + q(s) & \text{by \cref{eq:betagamma}} \\
		&\leq \alpha + \frac{q_0}{2} & \text{since $s > \frac{r_0+r_1}{2}$ by \Cref{G6}} \\
		& < q_0 & \text{since $\alpha < \frac{q_0}{2}$ by \Cref{G6}}
		\end{align*}
		Thus, $h(S) = q_0$ exceeds $\gamma$, and so $S \subset \suplevel{h}{\gamma}$ as desired.
		
		Finally, to see that $S$ is the maximal invariant subset of $K$, begin with the facts $S \subset \GM{N}$ and $\GM{N} \subset \sublevel{g}{r}$ established above, so we have
		\begin{align*}
		S &\subset \GM{N} \cap \crit{f} \subset \sublevel{g}{r} \cap \crit{f}.
		\end{align*}
		\Cref{G3} guarantees $\sublevel{g}{r} \cap  \crit{f} = S$, so we conclude that $S = \GM{N} \cap \crit{f}$. Since $S$ lies on a single level set $f=0$, there are no connecting orbits between points of $S$, and so $S$ is the maximal invariant subset of $K$.
	\end{proof}

	\begin{lemma} \label{lem:IP23} $\GM{N}_-$ is positively invariant in $\GM{N}$ and is an exit set of $\GM{N}$, satisfying \Cref{IP2,IP3} respectively.
	\end{lemma}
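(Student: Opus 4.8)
The plan is to exploit the one structural fact that makes these two axioms nearly automatic: by \Cref{H4}, the perturbation $h$ is non-increasing along the gradient flow $\sigma$ of $f$, just as $f$ itself is. First I would record this. For any $x\in M$ and any time $t$ (the flow is complete because $M$ is closed), the chain rule gives
\[
\frac{d}{dt}\, h(\sigma(t,x)) \;=\; \expval{\nabla h, \dot\sigma} \;=\; -\expval{\nabla_{\sigma(t,x)} h,\, \nabla_{\sigma(t,x)} f} \;\leq\; 0,
\]
with strict inequality off $\crit{f}$ by \Cref{H4}, while $\tfrac{d}{dt} f(\sigma(t,x)) = -\norm{\nabla_{\sigma(t,x)}f}^2 \leq 0$ as well. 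Once both functions cutting out the pair $(\GM{N},\GM{N}_-)$ are known to decrease along orbits, verifying \Cref{IP2,IP3} becomes bookkeeping with these two monotone quantities.

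For positive invariance (\Cref{IP2}), I would take $x \in \GM{N}_-$ and $t > 0$ with $\sigma([0,t],x) \subset \GM{N}$, and check directly that each $\sigma(\tau,x)$ with $\tau \in [0,t]$ lies in $\GM{N}_-$. Membership in $\GM{N} = \sublevel{f}{\alpha}\cap\suplevel{h}{\beta}$ already supplies $f(\sigma(\tau,x)) \leq \alpha$ and $h(\sigma(\tau,x)) \geq \beta$; the remaining inequality $h(\sigma(\tau,x)) \leq \gamma$ follows because $h$ decreases along $\sigma$ and $h(x) \leq \gamma$ (since $x \in \GM{N}_- \subset \intlevel{h}{\beta}{\gamma}$). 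Hence $\sigma(\tau,x) \in \sublevel{f}{\alpha}\cap\intlevel{h}{\beta}{\gamma} = \GM{N}_-$.

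For the exit-set property (\Cref{IP3}), I would start from $x \in \GM{N}$ and $t > 0$ with $\sigma(t,x) \notin \GM{N}$, and first observe that since $f$ decreases along $\sigma$ and $f(x) \leq \alpha$, we still have $f(\sigma(t,x)) \leq \alpha$; so the orbit must have exited through the $h$-constraint, i.e. $h(\sigma(t,x)) < \beta$. I would then set $s := \sup\{\tau \in [0,t] : h(\sigma(\tau,x)) \geq \beta\}$: this set contains $0$ and is closed, so $s$ belongs to it with $0 \leq s < t$, and continuity of $\tau \mapsto h(\sigma(\tau,x))$ — together with $h(\sigma(\tau,x)) < \beta$ on $(s,t]$ — pins down $h(\sigma(s,x)) = \beta$. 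Monotonicity of $h$ (resp. $f$) along $\sigma$ then gives $h(\sigma(\tau,x)) \geq \beta$ (resp. $f(\sigma(\tau,x)) \leq \alpha$) for all $\tau \in [0,s]$, so $\sigma([0,s],x) \subset \GM{N}$; and $h(\sigma(s,x)) = \beta \leq \gamma$, since $\beta = \alpha + q(r) \leq \alpha + q(s) = \gamma$ by $s \leq r$ and the monotonicity of $q$ (\cref{eq:betagamma}, \Cref{G6}). Therefore $\sigma(s,x) \in \sublevel{f}{\alpha}\cap\intlevel{h}{\beta}{\gamma} = \GM{N}_-$, which is exactly what \Cref{IP3} asks for.

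I do not expect a real obstacle here; the argument stands or falls with \Cref{H4}. The one place to be careful is the selection of the exit time $s$ in \Cref{IP3}: it should be defined as a supremum rather than a naive ``first crossing,'' and one must then combine continuity with the monotonicity of $h$ along $\sigma$ both to conclude $h(\sigma(s,x)) = \beta$ and to ensure the orbit stays inside $\GM{N}$ throughout $[0,s]$. Completeness of $\sigma$, which follows from $M$ being closed, is used implicitly so that $\sigma([0,t],x)$ is defined for all finite $t$.
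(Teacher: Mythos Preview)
Your proof is correct and follows essentially the same approach as the paper: both use \Cref{H4} to establish that $h$ is non-increasing along the gradient flow, then verify \Cref{IP2,IP3} by monotonicity of $f$ and $h$ together with the intermediate value theorem to locate the exit time. Your treatment is in fact slightly more careful than the paper's---you define $s$ as a supremum, explicitly verify $\sigma([0,s],x)\subset\GM{N}$, and check $\beta\leq\gamma$---but the underlying argument is identical.
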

	\begin{proof}
		We note from \cref{eq:gradflow} that $\sigma$ flows along the gradient $-\nabla{f}$, so $f$ is non-increasing along the flow. Thus if $x \in \GM{N}$, then $f\qty(\sigma(t,x)) \leq \alpha$; and by \cref{H4}, $h$ is also non-increasing along the flow.
		Thus if $x \in \GM{N}_-$, then $\gamma \geq h\qty(\sigma(t,x))$.
		Since $\GM{N}_- = \sublevel{f}{\alpha} \cap \intlevel{h}{\beta}{\gamma}$, if $x \in \GM{N}_-$, then any $\sigma(t,x) \in \GM{N}$ is in $\GM{N}_-$ if $t > 0$, therefore it is positively invariant in $\GM{N}$ and satisfies \Cref{IP2}.
		
		We now show that $\GM{N}_-$ satisfies the exit set condition \Cref{IP3}. Consider any $x \in \GM{N}$, such that $\sigma(t,x) \notin \GM{N}$ for some $t > 0$. Then either $f(\sigma(t,x)) > \alpha$ or $h(\sigma(t,x)) < \beta$. As $f(x) \leq \alpha$, and $f$ cannot increase along $\sigma(t,x)$, then we must have $h(\sigma(t,x)) < \beta$. As $h(x) \geq \beta$, there must be some $s \in [0,t)$ where $h(\sigma(s,x)) = \beta$ by continuity. Since $f$ cannot increase along $\sigma(t,x)$, we have $f(\sigma(s,x)) \leq \alpha$. Therefore, there is some $s \in [0,t]$  such that $\sigma(s,x) \in \GM{N}_- = \sublevel{f}{\alpha} \cap \suplevel{h}{\beta}$ for any $x \in \GM{N}$ that flows outside $\GM{N}$ at some $t> 0$.
	\end{proof}
	
	\section{The Geometry of Regular Intersections}
	\label{sec:geometry}
	Given our definition of $(\GM{N},\GM{N}_-)$ in \cref{eq:GMM}, the problem of inferring Conley indices is subsumed by the more general task of inferring the homology of subsets generated by taking finite intersections of level and sublevel sets of smooth functions at regular values. We parametrise this class of subsets as follows. 
	
	\begin{definition} \label{def:regcap} A non-empty subset $E$ of a compact Riemannian manifold $M$ is called a $(\mu, \Lambda)$-{\bf regular intersection} for real numbers $\mu > 0$ and $\Lambda \geq 0$ if there exist (finitely many) smooth functions $f_1, \ldots, f_\ell:M \to \R$, such that $E$ can be written as a finite intersection
		\begin{equation}
		E = \bigcap_{i =1}^\ell \level{f_i}{I_i} \label{eq:regcap}
		\end{equation}
		where each $\level{f_i}{I_i}$ is either a level set with $I_i = \set{0}$ or a sublevel set with $I_i = (-\infty, 0]$, with $0$ being a regular value of each $f_i$; moreover, these $f_i$ satisfy the following criteria:
		\begin{enumerate}[start = 1,label={\bfseries \upshape (R\arabic*)}]
			\item \label{def:regcap1} For any $1 \leq k \leq \ell$ and set of indices $1 \leq i_1 < i_2 < \cdots < i_k \leq \ell$ with $f_{(i_1, \ldots, i_k)}= (f_{i_1}, \ldots , f_{i_k}): M \to \R^k$, the Jacobian $\dd_p{f_{(i_1, \ldots, i_k)}}$ is surjective at all points $p$ in the intersection $\level{f_{(i_1, \ldots, i_k)}}{0} \cap E$, and the smallest non-zero singular value of this Jacobian  is greater or equal to $\mu$.
			\item \label{def:regcap2} The supremum $\sup_{p \in M} \|H_pf_i\|$ of the norm of each $f_i$'s Hessian on $M$ is bounded above by $\Lambda$; here
			\begin{equation}
			\norm{H_pf_i} := \sup_{\|X\|_{\R^m} = 1} \norm{H_pf_i(X)}_{\R^m}.
			\end{equation}
		\end{enumerate}
	\end{definition}
	
	Next we show that regular intersections are topologically well-behaved; in the statement below, we write $\interior{A}$ to indicate the interior of a subset $A$ of $M$, and $\closure{A}$ to denote its closure in $M$.

	\begin{lemma} \label{lem:regcapisreg} Every regular intersection of the form
		\begin{equation*}
		E = \bigcap_{i=1}^\ell \sublevel{f_i}{0}
		\end{equation*}
		is a regular closed subset of $M$. In particular, we have $E = \closure{\interior{E}}$ where
		\begin{align*}
		\interior{E} = \bigcap_{i=1}^\ell \sublevelopen{f_i}{0} .
		\end{align*}
		\begin{proof}
			We first check that $\interior{E}$ has the desired form; to this end, note that for each sublevel set $\sublevel{f_i}{0}$ taken at a regular value, the open sublevel set $\sublevelopen{f_i}{0}$ is the interior of $\sublevel{f_i}{0}$ (see \cite[Proposition 5.46]{lee_introduction_2012}). Thus, as $\sublevelopen{f_i}{0}$ is the largest open set in $\sublevel{f_i}{0}$, the intersection $\bigcap_{i=1}^\ell\sublevelopen{f_i}{0}$ must contain any open set of $E$, including $\interior{E}$. However, as $\sublevelopen{f_i}{0} \subset E$,
			\begin{align*}
			\interior{E} \subseteq \bigcap_{i=1}^l\sublevelopen{f_i}{0} \subset E \implies  \interior{E} = \bigcap_{i=1}^l\sublevelopen{f_i}{0}.
			\end{align*}
			Since it follows from the definition of closure that $E \supseteq \closure{\interior{E}}$, it suffices to show that $E \subseteq \closure{\interior{E}}$. Consider $p \in E \setminus \interior{E}$, where without loss of generality we assume that $f_1(p) = \cdots = f_k(p) = 0$ and $f_{i}(p) < 0$ for $i > k$. Let $u_i = \normvec{\nabla_p f_i }$. Let $\tilde{u}_i$ be the component of $u_i$ orthogonal to all $u_j$ where $j \neq i$ and $1 \leq j \leq k$. Since $E$ is a regular intersection (\Cref{def:regcap}), we have $\tilde{u}_i \neq 0$ and $\innerprod{\tilde{u}_i}{u_j}$ is positive and non-zero if and only if $i = j$. Define
			\begin{equation}
			v = -\sum_{i=1}^k \tilde{u}_i,
			\end{equation}
			so $\innerprod{v}{u_i} < 0$ for all $i= 1,\ldots, k$ (thus patently $v \neq 0$). Consider a continuous curve $\gamma(t)$ on $M$ where $\gamma(0) = p$ and $\dot{\gamma}(0) = v$. As $f_i$ are continuous, $f_i(p) = 0$ and $\innerprod{\nabla_p f_i }{\dot{\gamma}(0) } < 0$ for $i \leq k$; and $f_{i}(p) < 0$ for $i > k$, there is some sufficiently small $\epsilon > 0$ such that for all $t \in (0, \epsilon)$, we have $f_i(\gamma(t)) < 0$ for all $i \in \set{1,\ldots, n}$. We thus have for any $p \in E \setminus \interior{E}$ a sequence of points $\gamma(-t)$ for $t \in (0, \epsilon)$ in $E$, whose limit is $p$. Therefore,  $E \subseteq \closure{\interior{E}}$ as desired. 
		\end{proof}
	\end{lemma}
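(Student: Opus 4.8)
The plan is to treat the two assertions separately: pinning down $\interior{E}$ is a soft point-set matter, whereas the regular-closedness $E = \closure{\interior{E}}$ carries all of the geometric content. For the interior, note that since $0$ is a regular value of each $f_i$, the open sublevel set $\sublevelopen{f_i}{0}$ is precisely the topological interior of $\sublevel{f_i}{0}$: it is open and contained in $\sublevel{f_i}{0}$, and no boundary point $x \in \level{f_i}{0}$ can be interior since $\nabla_x f_i \neq 0$ produces points with $f_i > 0$ arbitrarily nearby. As the interior of a finite intersection equals the intersection of the interiors, $\interior{E} = \bigcap_{i=1}^{\ell} \sublevelopen{f_i}{0}$. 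Each $\sublevel{f_i}{0}$ is closed, hence so is $E$, and therefore $\closure{\interior{E}} \subseteq E$ with no further work. Everything then reduces to the reverse inclusion $E \subseteq \closure{\interior{E}}$: every $p \in E$ must be exhibited as a limit of points of $\bigcap_{i=1}^{\ell} \sublevelopen{f_i}{0}$.

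Fix $p \in E \setminus \interior{E}$ and let $A = \set{i : f_i(p) = 0}$ be the set of active indices; it is non-empty, and for $i \notin A$ we have $f_i(p) < 0$, so $f_i < 0$ on a whole neighbourhood of $p$. The key step is to produce a tangent vector $v \in \tp{p}{M}$ that is a simultaneous strict descent direction for the active constraints, i.e. $\innerprod{\nabla_p f_i}{v} < 0$ for every $i \in A$. This is exactly where regularity enters: since $p$ lies in $\level{f_A}{0} \cap E$ for the tuple $f_A := (f_i)_{i \in A}$, the surjectivity hypothesis on Jacobian minors in \Cref{def:regcap} forces $\dd_p f_A$ to be surjective, equivalently that the gradients $\set{\nabla_p f_i}_{i \in A}$ are linearly independent. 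From this, $v$ is built by hand: normalise $u_i = \normvec{\nabla_p f_i}$, let $\tilde{u}_i$ be the component of $u_i$ orthogonal to $\vspan{u_j : j \in A,\ j \neq i}$ (non-zero, by linear independence), and set $v = -\sum_{i \in A} \tilde{u}_i$; then $\innerprod{v}{u_i} = -\norm{\tilde{u}_i}^2 < 0$, hence $\innerprod{v}{\nabla_p f_i} < 0$, for each $i \in A$. (Equivalently, $v$ may be taken as minus the sum of the basis dual to $\set{\nabla_p f_i}$ inside its span, or produced by Gordan's theorem.)

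Finally, convert this infinitesimal statement into an approximating sequence. Choose any smooth curve $\gamma$ in $M$ with $\gamma(0) = p$ and $\dot\gamma(0) = v$. For $i \in A$, the function $t \mapsto f_i(\gamma(t))$ vanishes at $0$ with derivative $\innerprod{\nabla_p f_i}{v} < 0$ there, so $f_i(\gamma(t)) < 0$ for all small $t > 0$; for $i \notin A$, $f_i(\gamma(t)) < 0$ for small $t$ simply because $f_i(p) < 0$ and $\gamma$ is continuous. Hence $\gamma(t) \in \bigcap_{i=1}^{\ell} \sublevelopen{f_i}{0} = \interior{E}$ for all sufficiently small $t > 0$, and $\gamma(t) \to p$ as $t \to 0^+$, so $p \in \closure{\interior{E}}$; this proves $E \subseteq \closure{\interior{E}}$ and finishes the argument. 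I expect the construction of the simultaneous descent direction $v$ — and the correct invocation of regularity (\Cref{def:regcap}) needed to justify it — to be the only step with genuine content; everything else is bookkeeping with continuity and the definition of closure. It is worth recording that only the qualitative half of \Cref{def:regcap} is used here, namely surjectivity of the Jacobian minors; the quantitative lower bound $\mu$ on their singular values and the Hessian bound $\Lambda$ are not needed until the reach estimate.
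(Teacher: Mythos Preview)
Your proposal is correct and follows essentially the same route as the paper: identify $\interior{E}$ as the intersection of the open sublevel sets, then for a boundary point $p$ use the linear independence of the active gradients (from \Cref{def:regcap}) to build the simultaneous descent direction $v = -\sum_{i\in A}\tilde u_i$ and push along a curve with $\dot\gamma(0)=v$ into $\interior{E}$. Your computation $\innerprod{v}{u_i} = -\norm{\tilde u_i}^2$ and your remark that only the qualitative surjectivity in \Cref{def:regcap} is used (not the bounds $\mu,\Lambda$) are both accurate and sharpen the exposition.
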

	
	We now proceed to analyse the geometry of regular intersections through the perspective of \cite{federer_curvature_1959}. 
	For any closed subset $A \subset \R^d$, let $d_A(x) := d_{\R^d}(x,A)$ denote the distance of any point $x \in \R^d$ to $A$, and let $\Nneighs{A}{x} \subseteq A$ be the set of nearest neighbours of $x$ in $A$. As $A$ is closed, $\Nneighs{A}{x}$ is a non-empty closed subset of $\R^d$. We let $\UP{A}$ be the set of points $x \in \R^d$ for which admit a unique nearest neighbour in $A$, and we call the complement of $\UP{A}$ in $\R^d$ the \emph{medial axis} of $A$:
	\begin{align}
	\UP{A} &:= \sett{x \in \R^d}{\# \Nneighs{A}{x} = 1}, \label{eq:UP} \\
	\med{A} &:= \sett{x \in \R^d}{\# \Nneighs{A}{x} > 1} = \R^d \setminus \UP{A}. \label{eq:med}
	\end{align}
	There is a projection map $\xi_A: \UP{A} \twoheadrightarrow A$ that sends each $x$ to its unique nearest neighbour in $A$. For $p$ in $A$, define the subset $\UP{A,p} = \sett{x \in \UP{A}}{\projmap{A}{x} = p}$. The \emph{local feature size} of $p \in A$ is 
	\begin{align}
	\lfs{A}{p} := d_{\R^d}(p, \med{A}). \label{eq:lfs}
	\end{align}
	We say $A$ has positive local feature size if $\lfs{A}{p} > 0$ for all $p \in A$. 
	
	\begin{definition}\label{def:reach}
		The {\bf reach} of $A$ is the infimum of the local feature size over $A$
		\begin{align}
		\reach_A := \inf_{p\in A} \lfs{A}{p}.
		\end{align}
		We say that $A$ has {\bf positive reach} if $\reach_A > 0$.
	\end{definition}
	
	One can also show that
	\begin{align}
	\reach_A &= \inf_{x \in \med{A}} d_A(x). \label{eq:reachmed}
	\end{align}
	Closed submanifolds of Euclidean space have positive reach \cite{lee_introduction_2012}, but in general the class of positive-reach subsets of $\R^d$ includes many non-manifold spaces. Our goal in this Section is to prove the following result, which is Lemma (B) from the Introduction.
	\begin{restatable}{lemma}{regpairreach}
		\label{lem:regreachbound} Every $(\mu, \Lambda)$-regular intersection $E = \bigcap_{i=1}^\ell \level{f}{I_i}$
		has its reach bounded from below by $\rho_k > 0$, which is given by
		\begin{equation}
		\frac{1}{\rho_k} = \frac{1}{\reach_M} + \sqrt{k} \cdot \frac{\Lambda}{\mu} \tag{\cref{eq:rhok}}
		\end{equation}
		where for any $p \in E$, the number of functions $f_i$ which is zero on $p$ is at most $k \leq l$.
	\end{restatable}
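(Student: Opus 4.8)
The plan is to bound the reach of $E$ by controlling its medial axis via \eqref{eq:reachmed}: it suffices to show that every point $x$ with two distinct nearest neighbours in $E$ lies at distance at least $\rho_k$ from $E$. I would first dispose of the purely manifold-theoretic obstruction — the reach of $E$ is automatically bounded by $\reach_M$ since $E \subset M$ — and then quantify the extra curvature contributed by the defining functions $f_i$. The natural strategy is an induction on $\ell$ (or more precisely on the number $k$ of active constraints at a given point), adding one function at a time: if $E' = E \cap \level{f_{\ell+1}}{I_{\ell+1}}$, I want to compare the normal cone of $E'$ at a point $p$ to that of $E$, and use a curvature comparison for the single hypersurface $\level{f_{\ell+1}}{0}$ inside $M$. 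For a single smooth function $f$ with regular value $0$, the level set $\level{f}{0}$ has its second fundamental form controlled by $\|Hf\|/\|\nabla f\| \le \Lambda/\mu$, which by Federer's characterisation of reach in terms of curvature gives a reach bound of the shape $1/\reach \le 1/\reach_M + \Lambda/\mu$; the $\sqrt{k}$ factor should emerge from combining $k$ such bounds, since the relevant normal directions are mutually orthogonal (as in the proof of \Cref{lem:regcapisreg}, where the $\tilde u_i$ are taken orthogonal) and the worst-case curvature adds in an $\ell^2$ rather than $\ell^1$ fashion.

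Concretely, I would proceed as follows. First, reduce to a local statement: fix $p \in E$ and let $f_1, \dots, f_k$ be the functions vanishing at $p$ (the sublevel constraints that are strict at $p$ are locally irrelevant, so $E$ locally agrees with $\bigcap_{i=1}^k \level{f_i}{I_i}$ near $p$). Second, identify the normal cone $\nv{p}{E}$: using \Cref{def:regcap1}, the gradients $\nabla_p f_i$ for $i \le k$ are linearly independent, and the generalised normal cone of $E$ at $p$ (relative to $M$) is spanned by $\{\nabla_p f_i : I_i = \{0\}\} \cup \{$nonnegative combinations of $\nabla_p f_i : I_i = (-\infty,0]\}$, plus the ambient normal space $\nv{p}{M}$. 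Third, establish the curvature bound: for a unit normal $u$ to $E$ at $p$ of the form $u = \sum_i c_i \frac{\nabla_p f_i}{\|\nabla_p f_i\|} + w$ with $w \in \nv{p}{M}$, I need to bound the principal curvatures of $E$ in the direction $u$; for a tangent vector $X \in \tp{p}{E}$ with $\|X\|=1$, the relevant curvature is governed by $\sum_i c_i \frac{H_p f_i(X,X)}{\|\nabla_p f_i\|} + (\text{term from } \reach_M)$, and applying Cauchy–Schwarz on the coefficients $c_i$ (with $\sum c_i^2 \le 1$) together with $|H_p f_i(X,X)| \le \Lambda$ and $\|\nabla_p f_i\| \ge \mu$ yields $\sqrt{k} \cdot \Lambda/\mu$ for the level-set part, giving the stated inequality. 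Finally, convert the curvature bound into a reach bound using Federer's \cite{federer_curvature_1959} result that $1/\reach_A = \sup$ of principal curvatures over all normal directions (for $\mathcal{C}^{1,1}$ sets of positive reach), handling the subtlety that $E$ may fail to be a manifold where constraints meet — there one uses the normal-cone formulation and the fact that the curvature estimate holds uniformly over the relevant faces.

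The main obstacle I anticipate is the last point: making rigorous the passage from pointwise second-fundamental-form / curvature estimates to a genuine lower bound on the reach when $E$ is not smooth (it is a manifold-with-corners-like intersection). For smooth submanifolds this is classical, but for intersections of level and sublevel sets one must argue directly about nearest-point projections. I expect the cleanest route is to take $x \in \med{E}$ with $d_E(x) = \rho < \rho_k$ and two nearest neighbours $p \ne p'$; by convexity of balls and the structure of $E$ near $p$, the segment from $p$ toward $x$ must enter the complement of $E$, i.e. cross some $\level{f_i}{0}$, and a Taylor expansion of $f_i$ along this segment — using $\|\nabla f_i\| \ge \mu$ and $\|H f_i\| \le \Lambda$ — forces $\rho \ge \rho_k$, a contradiction; the $\sqrt{k}$ again enters because one may have to control several $f_i$ simultaneously. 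A secondary technical point is confirming that the singular-value hypothesis \Cref{def:regcap1} at \emph{all} sub-collections of indices (not just the maximal one) is exactly what is needed to keep the local geometry of $E$ uniformly controlled as one moves around $\partial E$; I would check that the induction's inductive step only invokes \Cref{def:regcap1} for the sub-collection active at the point under consideration.
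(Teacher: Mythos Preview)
Your primary route (second fundamental form estimates, then Federer's curvature--reach characterisation) would work smoothly if $E$ were a submanifold, but you correctly flag the real obstacle and do not resolve it: when some $I_i = (-\infty,0]$, the set $E$ has corners, and there is no off-the-shelf statement converting a pointwise curvature bound over a normal cone into a reach bound. Your fallback (Taylor-expand several $f_i$ along a segment from a medial-axis witness) is in the right spirit but remains vague about exactly which function is being expanded and how the $\sqrt{k}$ emerges. There is also a minor slip in your Cauchy--Schwarz step: writing a unit normal as $u = \sum_i c_i \nabla_p f_i / \norm{\nabla_p f_i} + w$ does not give $\sum_i c_i^2 \le 1$ unless the $\nabla_p f_i$ are orthogonal, so the bookkeeping of constants would need more care.

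The paper sidesteps the non-smoothness entirely by a reduction you do not mention: given $p \in E$ and a candidate normal direction $x-p = n + \sum_{i=1}^k \lambda_i \nabla_p f_i$ (this form of the normal cone is established first, as you anticipate), set $\alpha_i = \lambda_i / \norm{\lambda}$ and consider the \emph{single} smooth function $\tilde f = \sum_{i=1}^k \alpha_i f_i$. Then $E \subset \sublevel{\tilde f}{0}$, $p \in \level{\tilde f}{0}$, and $x-p$ is normal to this single level set. The regularity hypothesis \ref{def:regcap1} gives $\norm{\nabla \tilde f} \ge \mu$ (since $\alpha$ is a unit vector and $\mu$ bounds the smallest singular value of the Jacobian), while the Hessian of $\tilde f$ is bounded by $\Lambda \sum_i |\alpha_i| \le \sqrt{k}\,\Lambda$ --- this is where the $\sqrt{k}$ genuinely enters. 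One then applies the single-function case (a Taylor expansion of $\tilde f$ and of the squared-distance function along a geodesic in $M$ joining two putative nearest neighbours), concluding $\fakelfs{E}{p} \ge \rho_k$ pointwise, and finishes via \Cref{lem:faketau0}. The linear-combination trick is the missing idea: it collapses the corner structure to a smooth hypersurface tailored to the particular normal direction, so no induction and no stratified curvature theory is needed.
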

	\noindent In order to arrive at this result, we first recall some fundamental facts about the reach. 
	
	\subsection{Geometric Consequences of the Reach}
	
	If $A \subset \R^d$ has positive reach, then every $x \in \R^d$ with $d(x,A) < \reach_A$ has a unique projection $\xi_A(x)$ in $A$. This is expressed in Federer's tubular neighbourhood theorem \cite{federer_curvature_1959}, recalled below.
	
	\begin{pheorem} \label{thm:fedtube} Let $A$ be a subset of $\R^d$ with  $\reach_A >0$. Then for $r \leq \reach_A$, the set
		\begin{equation*}
		A^r := \bigcup_{p \in A} \ball{r}{p}
		\end{equation*}
		is entirely contained within $\UP{A}$ from \cref{eq:UP}.
	\end{pheorem}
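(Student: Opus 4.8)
The statement is one of the basic structural facts about the reach, so the proof is a short unwinding of definitions rather than a substantive argument. The plan is as follows. First I would observe that $A^r = \bigcup_{p\in A}\ball{r}{p}$ --- a union of \emph{open} balls --- is exactly the set $\{\, x : d_A(x) < r \,\}$; the strictness here matters, since $\{\, x : d_A(x) = \reach_A \,\}$ may well meet $\med{A}$. Because $A$ is closed, every $x$ has a nonempty nearest-point set $\Nneighs{A}{x}$, so $\R^d$ splits cleanly as $\UP{A} \sqcup \med{A}$, and proving $A^r \subseteq \UP{A}$ is the same as proving $A^r \cap \med{A} = \emptyset$: that is, $d_A(x) \geq r$ for every $x$ in the medial axis. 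As $r \leq \reach_A$ by hypothesis, it therefore suffices to show
\[
d_A(x) \;\geq\; \reach_A \qquad\text{for all } x \in \med{A}.
\]

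This inequality is just the identity \cref{eq:reachmed}, which is in turn a bare interchange of a double infimum: unwinding \cref{eq:lfs} and \Cref{def:reach},
\[
\reach_A \;=\; \inf_{p\in A}\lfs{A}{p} \;=\; \inf_{p\in A}\ \inf_{x\in\med{A}} \norm{p-x} \;=\; \inf_{x\in\med{A}}\ \inf_{p\in A} \norm{p-x} \;=\; \inf_{x\in\med{A}} d_A(x),
\]
so in particular $\reach_A \leq d_A(x)$ for each $x \in \med{A}$, as required. There is no genuine obstacle here; the only things warranting a moment's care are the strict-inequality point above and the degenerate case $\med{A} = \emptyset$, in which $\lfs{A}{p} = d_{\R^d}(p,\emptyset) = +\infty$, so $\reach_A = +\infty$, $\UP{A} = \R^d$, and the claim holds vacuously. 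One could equally well cite \cite{federer_curvature_1959} directly: the result is immediate once the medial-axis definition of the reach in \Cref{def:reach} is reconciled with Federer's original unique-projection formulation, and that reconciliation is precisely \cref{eq:reachmed}.
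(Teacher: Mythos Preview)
The paper does not supply its own proof of this statement: it is introduced as ``Federer's tubular neighbourhood theorem \cite{federer_curvature_1959}, recalled below'' and stated without argument, in the same way the paper handles the other background results \Cref{thm:geomdistortion}, \Cref{thm:manifoldtube}, and \Cref{lem:paralleldistortion}. So there is no in-paper proof to compare against.

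Your argument is correct. The identification $A^r = \{x : d_A(x) < r\}$ (using that the balls are open and $A$ is closed), the partition $\R^d = \UP{A} \sqcup \med{A}$, and the reduction to $d_A(x) \geq \reach_A$ for $x \in \med{A}$ via \cref{eq:reachmed} are exactly the right steps, and you handle the edge cases ($r = \reach_A$, $\med{A} = \emptyset$) cleanly. You yourself note at the end that one could simply cite \cite{federer_curvature_1959}; that is precisely what the paper does.
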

	
	The reach also places constraints on the length of shortest paths between two points on a shape; here is the content of \cite[Theorem 1 \& Corollary 1]{boissonnat_reach_2019}. In the statement below, $\ballc{r}{x}$ denotes a closed Euclidean ball around a point $x$ whereas $\ball{r}{x}$ denotes the corresponding open ball.
	
	\begin{pheorem} \label{thm:geomdistortion} Assume that $A \subset \R^d$ has positive reach. Consider points $p,q \in A$ contained  $\ballc{r}{x}$ with  $r < \reach_A$. Then:
		\begin{enumerate}[label={\textup{(\roman*)}}]
			\item There is a geodesic path connecting $p$ and $q$ in $A$ which lies entirely within $\ball{r}{x} \cap A$.
			\item The length of this geodesic path is bounded above by
			\begin{align}
			d_A(p,q) \leq 2\reach_A \asin(\frac{\norm{p-q}}{2\reach_A}).
			\end{align}
		\end{enumerate}
	\end{pheorem}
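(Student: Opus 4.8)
The plan is to follow the argument of \cite[Theorem~1 and Corollary~1]{boissonnat_reach_2019}, organised into three stages. Throughout I write $\reach_A$ for the reach of $A$ and use Federer's semiconcavity estimate \cite{federer_curvature_1959}: if $a \in A$ and $\nu$ is a unit vector in the normal cone of $A$ at $a$ (i.e.\ $\xi_A(a + t\nu) = a$ for all small $t > 0$), then $\innerprod{b - a}{\nu} \le \norm{b-a}^2/(2\reach_A)$ for every $b \in A$. The first stage, which I would establish before the others, extracts from this a regularity statement for geodesics: any locally length-minimising path $\gamma$ in $A$, parametrised by arc length, is $C^{1,1}$ with pointwise curvature at most $1/\reach_A$, equivalently $\anglegeo{\dot\gamma(s)}{\dot\gamma(t)} \le \abs{s-t}/\reach_A$. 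The idea is that near $\gamma(s_0)$ Federer's estimate controls the deviation of $\gamma$ from its tangent line, $\norm{\gamma(s) - \gamma(s_0) - (s-s_0)\dot\gamma(s_0)} \le (s-s_0)^2/(2\reach_A)$; since $\gamma$ is length-minimising the chord $\gamma(s) - \gamma(s_0)$ is aligned with $\dot\gamma(s_0)$ to first order, and comparison with the osculating circle of radius $\reach_A$ upgrades this to the stated Lipschitz bound on the unit tangent $\dot\gamma$.

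The second stage is the proof of (i). First I would note that every path component of $A$, with the induced length metric $d_A$, is a proper geodesic metric space: a closed $d_A$-ball sits inside the corresponding Euclidean closed ball and is therefore compact, so shortest paths between points of a component exist by Hopf--Rinow. Next, $\ballc{r}{x} \cap A$ is path connected --- since $d_A(x) \le \norm{x - p} \le r < \reach_A$ the nearest point $\xi_A(x)$ is defined, and for any $a \in \ballc{r}{x} \cap A$ the curve $t \mapsto \xi_A\bigl((1-t)a + tx\bigr)$, $t \in [0,1]$, joins $a$ to $\xi_A(x)$ inside $\ballc{r}{x} \cap A$, because every point $c(t) = (1-t)a+tx$ of the segment $[a,x]$ obeys $d_A(c(t)) + \norm{c(t) - x} \le \norm{c(t) - a} + \norm{c(t) - x} = \norm{a - x} \le r$. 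Hence $p,q$ lie in one component and there is a shortest path $\gamma:[0,L] \to A$ with $\gamma(0) = p$, $\gamma(L) = q$ and $L = d_A(p,q)$. To confine $\gamma$ to $\ballc{r}{x}$, set $\rho(s) = \norm{\gamma(s) - x}$; Stage one gives, for a.e.\ $s$, that $\phi := \rho^2$ satisfies $\phi''(s) = 2 + 2\innerprod{\gamma(s) - x}{\ddot\gamma(s)} \ge 2\bigl(1 - \rho(s)/\reach_A\bigr)$ (using $\norm{\dot\gamma(s)} = 1$ and $\norm{\ddot\gamma(s)} \le 1/\reach_A$), so $\phi$ is strictly convex on any subinterval where $\rho < \reach_A$. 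Consequently $\rho$ admits no interior local maximum at a parameter with $\rho < \reach_A$, and together with $\rho(0),\rho(L) \le r$ this forces $\rho < r$ on $(0,L)$ \emph{provided} $\gamma$ never meets the sphere of radius $\reach_A$ about $x$. Excluding that last possibility is the delicate step of \cite{boissonnat_reach_2019}, and I expect it to be the main obstacle: it is precisely where the strict inequality $r < \reach_A$ is used, and the obvious fix of projecting the segment $[p,q]$ onto $A$ does not work, since that projected curve can lie at distance up to $r\sqrt{2}$ from $x$ and $\xi_A$ moreover expands lengths off $A$. Granting the confinement, $\gamma$ lies in $\ballc{r}{x} \cap A$ and is a geodesic of $A$, which proves (i).

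The third and final stage is the proof of (ii). Let $\gamma:[0,L]\to A$ be the geodesic from (i), so $L = d_A(p,q)$, and put $e = \dot\gamma(L/2)$. Assuming for the moment that $L \le \pi\reach_A$, Stage one gives $\anglegeo{\dot\gamma(s)}{e} \le \abs{s - L/2}/\reach_A \le \pi/2$, hence $\innerprod{\dot\gamma(s)}{e} \ge \cos\bigl(\abs{s - L/2}/\reach_A\bigr)$, and so
\begin{align*}
\norm{p - q} \;\ge\; \innerprod{q - p}{e} &= \int_0^L \innerprod{\dot\gamma(s)}{e}\, ds \\
&\ge \int_0^L \cos\left(\frac{\abs{s - L/2}}{\reach_A}\right) ds \;=\; 2\reach_A \sin\left(\frac{L}{2\reach_A}\right).
\end{align*}
As $\sin$ is increasing on $[0,\pi/2]$ and $L/(2\reach_A) \in [0,\pi/2]$, this rearranges to $L \le 2\reach_A \asin\bigl(\norm{p-q}/(2\reach_A)\bigr)$, which is (ii). Finally the assumption $L \le \pi\reach_A$ is automatic: were $L > \pi\reach_A$, the sub-arc $\gamma|_{[0,\pi\reach_A]}$ would again be a geodesic and, by (i), would lie in $\ballc{r}{x}$; the displayed estimate applied to it gives $\norm{\gamma(0) - \gamma(\pi\reach_A)} \ge 2\reach_A \sin(\pi/2) = 2\reach_A$, contradicting the fact that $\gamma(0)$ and $\gamma(\pi\reach_A)$ lie in $\ballc{r}{x}$ and are therefore at distance at most $2r < 2\reach_A$.
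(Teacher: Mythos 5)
First, context: the paper does not prove this statement at all --- it is imported verbatim as \cite[Theorem 1 \& Corollary 1]{boissonnat_reach_2019}, so there is no in-paper argument to compare against. Your reconstruction assembles the right ingredients from that reference (Federer's quadratic estimate, the $C^{1,1}$ regularity and curvature bound $1/\reach_A$ for length minimisers, convexity of $s \mapsto \norm{\gamma(s)-x}^2$, and the integral estimate $\norm{p-q} \geq \int_0^L \innerprod{\dot\gamma(s)}{e}\,ds$); the path-connectivity argument via $t \mapsto \xi_A\qty((1-t)a+tx)$ and the Stage-three computation are correct as written.

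Nevertheless the proposal has a genuine, load-bearing gap, and you have located it yourself: nothing rules out that the minimising geodesic reaches distance $\reach_A$ from $x$. Your convexity argument only controls $\rho = \norm{\gamma - x}$ where $\rho < \reach_A$; what it actually yields is the dichotomy ``either $\gamma$ stays in $\ballc{r}{x}$, or it escapes all the way to distance $\geq \reach_A$,'' and the second horn is never excluded. The available a priori bounds do not close it: the projected segment only gives a path of length $\leq \frac{\reach_A}{\reach_A - r}$ times the chord, and even the sharp bound (ii) confines $\gamma$ only to distance $r + \reach_A\arcsin(r/\reach_A)$ from $x$, which exceeds $\reach_A$ once $r/\reach_A$ is larger than roughly $1/2$. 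Moreover, in your arrangement part (ii) inherits the gap, since the exclusion of $L > \pi\reach_A$ invokes the confinement of the sub-arc; so neither (i) nor (ii) is actually established. It is worth knowing that \cite{boissonnat_reach_2019} decouple the two claims precisely to avoid this: the $\arcsin$ bound is proved by directly constructing a path through iterated projection of midpoints --- the midpoint $m$ of $p,q$ satisfies $d(m,A) \leq \reach_A - \sqrt{\reach_A^2 - \norm{p-q}^2/4}$ because $A$ avoids an open $\reach_A$-ball containing $m$ --- and the lengths telescope to $2\reach_A\arcsin\qty(\norm{p-q}/(2\reach_A))$ with no reference to geodesic confinement; the convexity statement (i) is then handled separately. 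Finally, your Stage one (length minimisers are $C^{1,1}$ with curvature at most $1/\reach_A$) is itself a substantive lemma of that paper rather than a one-line consequence of Federer's estimate, so it too needs a real proof rather than the sketch given.
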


	For $A \subset \R^d$ closed, consider the function $\fakereach_A : A \to [0, \infty]$ defined by
	\begin{align}
	\fakelfs{A}{p} &=
	\begin{cases}
	d_{\R^d}(p, \med{A,p}) &  \qif \med{A,p} \neq \emptyset \\
	\infty & \qif \med{A,p} = \emptyset.
	\end{cases} \label{eq:faketau}
	\end{align}
	This function furnishes lower bounds for the reach in the following sense. 
	\begin{lemma} \label{lem:faketau0} Let $A \subset \R^d$ be a closed subset. Then:
		\begin{enumerate}[ref=\thelemma (\roman*), label={\textup{(\roman*)}}]
			\item \label{lem:faketau0i}For any $p \in A$, we have $\fakelfs{A}{p}\geq \lfs{A}{p}$; and
			\item \label{lem:faketau0ii}$\inf_{p \in A} \fakelfs{A}{p} = \reach_A$.
		\end{enumerate}
		\begin{proof}  If $\med{A,p} = \emptyset$, then it follows from \cref{eq:faketau} that $\fakelfs{A}{p} = \infty \geq \lfs{A}{p}$. Otherwise, $\med{A}$ is non-empty since it contains $\med{A,p}$. Therefore,
			\begin{equation*}
			\fakelfs{A}{p} = d_{\R^d}(p, \med{A,p}) \geq d_{\R^d}(p, \med{A})  = \lfs{A}{p}.
			\end{equation*}
			We turn now to the second assertion. For $x \in \med{A}$, choose some point $p_x \in A$ such that $p_x \in \Nneighs{A}{x}$. From the definition of $\fakereach_A$ in \cref{eq:faketau}, we have $d(x,A) \geq \fakelfs{A}{p_x}$. Thus
			\begin{align*}
			\reach_A &= \inf_{x \in \med{A}} d(x,A) \geq \inf_{x \in \med{A}} \fakelfs{A}{p_x} \geq \inf_{p \in A} \fakelfs{A}{p}.
			\end{align*}
			As we have shown above that $\fakelfs{A}{p}\geq \lfs{A}{p}$, we also have an inequality in the opposite direction: $\inf_{p \in A} \fakelfs{A}{p}  \geq \reach_A$. Combining these two inequalities, we obtain $\reach_A = \inf_{p \in A} \fakelfs{A}{p}$.
			
		\end{proof}
	\end{lemma}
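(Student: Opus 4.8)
The plan is to prove both assertions by directly unwinding the relevant definitions: $\fakelfs{A}{p}$ from \cref{eq:faketau}, $\lfs{A}{p}$ from \cref{eq:lfs}, $\reach_A$ from \Cref{def:reach}, and the alternative characterisation $\reach_A = \inf_{x \in \med{A}} d_A(x)$ from \cref{eq:reachmed}. The one structural fact that drives everything is that $\med{A,p}$ --- the set of medial-axis points having $p$ among their nearest neighbours in $A$ --- is a subset of the full medial axis $\med{A}$.

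For assertion (i) I would split along the two cases in the definition of $\fakelfs{A}{p}$. If $\med{A,p} = \emptyset$ then $\fakelfs{A}{p} = \infty$ and $\fakelfs{A}{p} \geq \lfs{A}{p}$ is trivial. If $\med{A,p} \neq \emptyset$, then because $\med{A,p} \subseteq \med{A}$ the distance-to-a-set function is monotone under this inclusion, so $\fakelfs{A}{p} = d_{\R^d}(p,\med{A,p}) \geq d_{\R^d}(p,\med{A}) = \lfs{A}{p}$. That is the whole argument for part (i).

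For assertion (ii), one inequality is immediate from part (i): taking infima over $p \in A$ gives $\inf_p \fakelfs{A}{p} \geq \inf_p \lfs{A}{p} = \reach_A$. For the reverse inequality I would use \cref{eq:reachmed}. Fix any $x \in \med{A}$ and choose some $p_x \in \Nneighs{A}{x}$ (possible since $\#\Nneighs{A}{x} > 1$). By construction $p_x$ is a nearest neighbour of the medial-axis point $x$, so $x \in \med{A,p_x}$; in particular $\med{A,p_x}$ is non-empty, and since $d_A(x) = \norm{x - p_x}$ we get $\fakelfs{A}{p_x} = d_{\R^d}(p_x, \med{A,p_x}) \leq \norm{p_x - x} = d_A(x)$. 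Hence $\inf_p \fakelfs{A}{p} \leq d_A(x)$ for every $x \in \med{A}$, and taking the infimum over $x \in \med{A}$ and invoking \cref{eq:reachmed} yields $\inf_p \fakelfs{A}{p} \leq \reach_A$. Combining the two inequalities gives the desired equality.

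I do not expect a genuine obstacle here: the proof is a short definition chase, and the only point deserving a moment's care is the degenerate case $\med{A} = \emptyset$ (as happens for convex $A$), where $\reach_A = \infty$ by convention; there $\med{A,p} \subseteq \med{A}$ forces every $\med{A,p}$ to be empty, so every $\fakelfs{A}{p} = \infty$ and both assertions hold vacuously. The closest thing to a subtlety in the non-degenerate case is recording that $d_A(x) = \norm{x - p_x}$ for any $p_x \in \Nneighs{A}{x}$, which is exactly what makes the key inequality $\fakelfs{A}{p_x} \leq d_A(x)$ work.
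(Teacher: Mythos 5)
Your proof is correct and takes essentially the same route as the paper's: the same case split and monotonicity of set-distance under $\med{A,p}\subseteq\med{A}$ for (i), and for (ii) the same device of picking a nearest neighbour $p_x$ of each medial point $x$ to obtain $\fakelfs{A}{p_x}\leq d_A(x)$, combined with (i) for the reverse inequality. Your explicit observation that $x\in\med{A,p_x}$ and your handling of the degenerate case $\med{A}=\emptyset$ are small details the paper leaves implicit.
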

	
	We have considered $\fakelfs{A}{p}$ rather than the local feature size due to a convenient geometric property \cite[Theorem 4.8(7)]{federer_curvature_1959}.
	\begin{lemma} \label{lem:angletonormal} Let $A$ be a closed subset of $\R^d$ and consider $x \in \UP{A,p}$. Then for any $q \in A$,
		\begin{equation}
		\frac{\norm{q-p}}{2\fakelfs{A}{p}}>  \innerprod{\normvec{q-p}}{\normvec{x-p}}. \label{eq:angletonormal}
		\end{equation}
	\end{lemma}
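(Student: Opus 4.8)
The plan is to treat this as the Federer-type estimate \cite[Theorem 4.8(7)]{federer_curvature_1959} and to reconstruct the short geometric argument behind it. First I would clear the inessential cases: the right-hand side of \eqref{eq:angletonormal} is defined only for $q \neq p$ and $x \neq p$, and its left-hand side is $+\infty$ (so there is nothing to prove) when $\fakelfs{A}{p} = 0$; so assume $q \neq p$, $x \neq p$, and write $u := \normvec{x - p}$ and $R := \fakelfs{A}{p} \in (0,\infty]$. Since $\innerprod{q-p}{x-p} = \norm{x-p}\,\innerprod{q-p}{u}$ and $\norm{q-p},\norm{x-p} > 0$, dividing through shows that \eqref{eq:angletonormal} is equivalent to
\begin{equation*}
\innerprod{q-p}{u} \;<\; \frac{\norm{q-p}^2}{2R}.
\end{equation*}
Thus the assertion is really about the ray $t \mapsto p + tu$ fired from $p$ through $x$, and the crux is to show that this ray stays inside the fibre $\UP{A,p} = \xi_A^{-1}(p)$ for parameters up to (essentially) $R$.

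The ray claim is the substance. For the initial stretch $0 < t \le \norm{x-p}$ it is the standard \emph{segment property}: from the chain
\begin{equation*}
t = \norm{(p+tu)-p} \;\ge\; d_{\R^d}(p+tu,A) \;\ge\; d_{\R^d}(x,A) - \norm{x-(p+tu)} = \norm{x-p} - (\norm{x-p}-t) = t
\end{equation*}
one reads off that $p$ is a nearest neighbour of $p+tu$, and uniqueness propagates back along the segment from $x$. To continue past $\norm{x-p}$, let $t_0$ be the supremum of those $t$ with $\xi_A(p+tu) = p$, and suppose $t_0 < R$. Upper semicontinuity of the set-valued nearest-point map $z \mapsto \Nneighs{A}{z}$ (valid because $A$ is closed and $d_{\R^d}(\cdot,A)$ is continuous) forces $p \in \Nneighs{A}{p + t_0 u}$, while maximality of $t_0$ prevents $p + t_0 u$ from lying in $\UP{A}$; hence $p + t_0 u$ has a second nearest neighbour and so lies in $\med{A}$ with $p$ among its nearest points, i.e.\ $p + t_0 u \in \med{A,p}$. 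This yields $R = d_{\R^d}(p,\med{A,p}) \le t_0 < R$, a contradiction, so $t_0 \ge R$.

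Granting the ray claim, fix any $t$ with $0 < t < R$. Since $p$ is the \emph{unique} nearest neighbour of $p+tu$ in $A$ and $q \neq p$, we have $\norm{(p+tu)-q} > \norm{(p+tu)-p} = t$, and expanding $\norm{(p+tu)-q}^2 > t^2$ gives
\begin{equation*}
t^2 - 2t\,\innerprod{u}{q-p} + \norm{q-p}^2 \;>\; t^2, \qquad\text{that is,}\qquad \innerprod{u}{q-p} \;<\; \frac{\norm{q-p}^2}{2t}.
\end{equation*}
Letting $t \uparrow R$ gives $\innerprod{u}{q-p} \le \norm{q-p}^2/(2R)$; the upgrade to the strict inequality of \eqref{eq:angletonormal} comes from the fact that the ray actually survives in $\UP{A,p}$ up to and including $t = R$ when $R < \infty$ (so that one may take $t = R$ and retain strictness), equivalently from noting that equality would make $q$ a co-nearest neighbour of $p + Ru$ and hence contradict $p + Ru \in \UP{A,p}$. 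The degenerate case $\med{A,p} = \emptyset$, where $R = \infty$, is handled separately.

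The main obstacle is exactly the ray claim, and within it the borderline parameter $t = R$: the supremum argument cleanly delivers the open interval $0 < t < R$, but certifying that $p + Ru$ still projects uniquely to $p$ --- equivalently, that the nearest point of $\med{A,p}$ to $p$ is not met along the direction $u$ --- is where one must lean on the precise definition of $\med{A,p}$ and on Federer's structural analysis of $\UP(\cdot)$ and $\med(\cdot)$ for general closed sets rather than on elementary Euclidean reasoning. Everything else is bookkeeping.
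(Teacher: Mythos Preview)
Your reduction to the ray claim is sound and the segment-property step for $0 < t \le \norm{x-p}$ is fine, but the supremum argument has a real gap at the sentence ``maximality of $t_0$ prevents $p+t_0u$ from lying in $\UP{A}$.'' If $p+t_0u \in \UP{A}$ then, since $p$ is among its nearest neighbours by your continuity step, $p+t_0u \in \UP{A,p}$ and the supremum is simply attained; nothing forces the ray to extend past $t_0$, so no contradiction results. You locate the obstacle at the borderline $t = R$, but it already bites strictly inside $(0,R)$, and it cannot be repaired because the inequality with $\fakelfs{A}{p}$ in the denominator is false. Take $A = \sett{(s,s^2)}{s\in\R}$ and $p=(0,0)$. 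One checks that $p$ is a nearest point of $A$ to $z$ only for $z=(0,b)$ with $b\le\tfrac12$, and uniquely so; hence $\med{A,p}=\emptyset$ and $\fakelfs{A}{p}=\infty$. With $x=(0,\tfrac14)\in\UP{A,p}$ and $q=(s,s^2)$ for small $s\neq0$ the right side of \eqref{eq:angletonormal} is positive while the left side vanishes. Along the ray one has $t_0 = \tfrac12 < \infty = R$ with $p+t_0u = (0,\tfrac12) \in \UP{A,p}$, exhibiting exactly the configuration your argument excludes. Your ``handled separately'' clause for $\med{A,p}=\emptyset$ is thus where the statement itself breaks.

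The paper offers no proof, citing \cite[Theorem 4.8(7)]{federer_curvature_1959}. Federer's inequality, however, is for the local feature size $\lfs{A}{p} = d(p,\med{A})$ --- his $\mathrm{reach}(A,a)$ --- not for $\fakelfs{A}{p}$, and it is non-strict; both discrepancies matter (for strictness take $A=\set{(0,0),(2,0)}$, $p=(0,0)$, $q=(2,0)$, $x=(\tfrac12,0)$, where both sides equal $1$). With $\lfs{A}{p}$ in place of $\fakelfs{A}{p}$ the missing step is supplied by the inclusion $\ball{\lfs{A}{p}}{p}\subset\UP{A}$ together with Federer's normal-cone analysis in \cite[Theorem 4.8]{federer_curvature_1959}, and the resulting non-strict bound --- hence its corollary with $\reach_A$ via $\reach_A \le \lfs{A}{p}$ --- is all the paper actually invokes downstream.
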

	The geometric implications of this inequality are illustrated in \Cref{fig:fakelfs}. 
	
	\begin{figure}
		\centering
		\includegraphics[width = 0.6\textwidth]{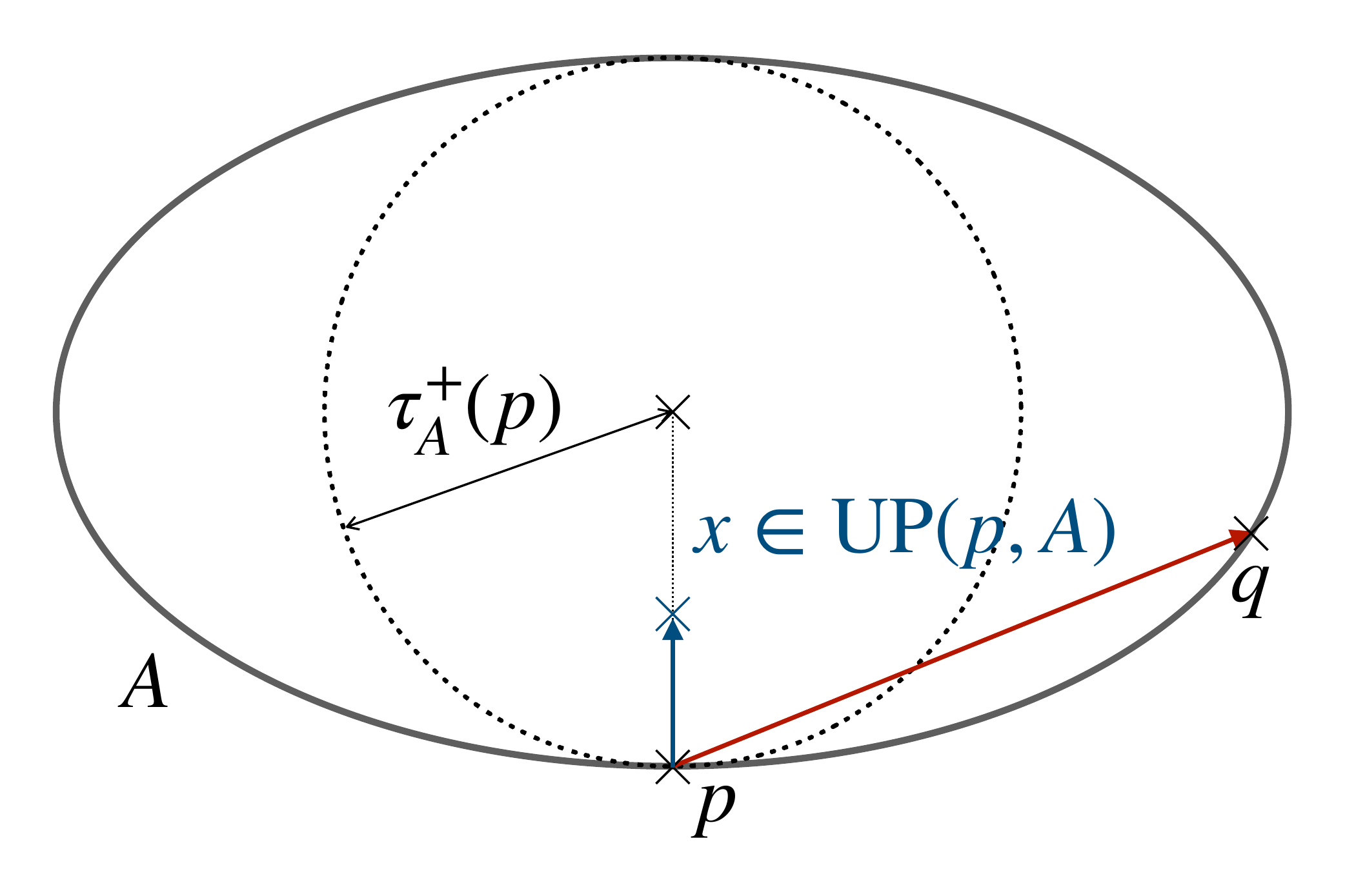}
		\caption{An illustration of the constraint placed on points $q \in A$ in relation to point $p \in A$ due to \Cref{lem:angletonormal}: implies any point $q \in A$ must lie outside the ball of radius $\fakelfs{A}{p}$ (boundary indicated by a dashed line), if the radial vector from $p$ to the centre of the ball points along $x-p$ for some $x \in \UP{A,p}$.}
		\label{fig:fakelfs}
	\end{figure}

	\subsection{The Reach of Manifolds}
	
	Here we collect some relevant facts about the reach of closed submanifolds of Euclidean space. 
	
	Fix a closed $m$-dimensional submanifold $M \subset \R^d$, and let $\tp{p}{M} \subset T_p\R^d$ be the plane tangent to $M$ at $p$ in the ambient Euclidean space. By $\zeta_p: M \to \tp{p}{M}$ we denote the restriction to $M$ of the orthogonal projection $\R^d \twoheadrightarrow \tp{p}{M}$. The normal space $N_pM$ to $M$ at $p$ is the kernel of this projection, i.e., the $(d-m)$-dimensional orthogonal complement to $\tp{p}{M}$ in $\R^d$. For any non-zero vectors $u \in T_pM$ and $v \in T_qM$ (where $p$ is not necessarily the same point as $q$), we let $\anglegeo{u}{v}$ be the angle between $u$ and $v$ parallel transported in the ambient Euclidean space to $T_0\R^d$ where $0$ is the (arbitrarily chosen) origin. Here is \cite[Theorem 4.8(12)]{federer_curvature_1959}.
	
	\begin{pheorem} \label{thm:manifoldtube} For any $p \in M$ and $r < \lfs{M}{p}$, we have
		\begin{equation}
		\UP{M,p} \cap \ball{r}{p} = \qty( p + N_pM) \cap \ball{r}{p}.
		\end{equation}
	\end{pheorem}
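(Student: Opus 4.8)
The plan is to work directly from the elementary description of the fibre of $\xi_M$: for $x\in\R^d$ and $p\in M$ one has $x\in\UP{M,p}$ exactly when $\|q-x\|\geq\|p-x\|$ for all $q\in M$, with equality only at $q=p$ — equivalently, the open ball $\ball{\|x-p\|}{x}$ misses $M$ and $p$ is the only point of $M$ on its bounding sphere. I would prove the two inclusions separately, noting that only the $\supseteq$ direction uses the hypothesis $r<\lfs{M}{p}$ (indeed $\UP{M,p}\subseteq p+N_pM$ with no ball restriction).

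For $\UP{M,p}\cap\ball{r}{p}\subseteq(p+N_pM)\cap\ball{r}{p}$, take $x=p+v$ in the left-hand side with $v\neq0$ (the case $v=0$ is trivial). Since $\ball{\|v\|}{x}$ misses $M$, every $q\in M$ satisfies $\|q-p\|^2-2\innerprod{q-p}{v}=\|q-x\|^2-\|v\|^2\geq0$. For a fixed unit $u\in\tp{p}{M}$ I would pick a smooth curve $\gamma$ in $M$ with $\gamma(0)=p$ and $\dot\gamma(0)=u$; substituting $q=\gamma(s)$ and Taylor expanding gives $-2s\innerprod{u}{v}+O(s^2)\geq0$, so dividing by $s>0$ and letting $s\to0^+$ yields $\innerprod{u}{v}\leq0$, while dividing by $s<0$ and letting $s\to0^-$ yields $\innerprod{u}{v}\geq0$. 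Hence $\innerprod{u}{v}=0$ for every $u\in\tp{p}{M}$, i.e. $v\in N_pM$. (Alternatively one can feed $q=\gamma(s)$ into \Cref{lem:angletonormal} and pass to the limit, using $\fakelfs{M}{p}\geq\lfs{M}{p}\geq\reach_M>0$.)

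For the reverse inclusion, since $(p+N_pM)\cap\ball{r}{p}$ is $p$ together with the points $p+s\hat v$ with $\hat v\in N_pM$ a unit vector and $0<s<r<\lfs{M}{p}$, it suffices to show $p+s\hat v\in\UP{M,p}$ whenever $0\leq s<\lfs{M}{p}$. I would fix $\hat v$ and set $\varrho:=\sup\sett{t\geq0}{\ball{t}{p+t\hat v}\cap M=\emptyset}$. Splitting on the sign of $\innerprod{q-p}{\hat v}$ shows $\ball{t}{p+t\hat v}\cap M=\emptyset$ iff $t\leq\|q-p\|^2/(2\innerprod{q-p}{\hat v})$ for all $q\in M$ with $\innerprod{q-p}{\hat v}>0$, hence
\[
\varrho=\inf\sett{\frac{\|q-p\|^2}{2\innerprod{q-p}{\hat v}}}{q\in M,\ \innerprod{q-p}{\hat v}>0}\in(0,\infty],
\]
with $\inf\emptyset=+\infty$. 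For $t<\varrho$, choosing $t'\in(t,\varrho)$ and using the triangle inequality gives $\ballc{t}{p+t\hat v}\setminus\set{p}\subseteq\ball{t'}{p+t'\hat v}$ (the equality case forces the point to be $p$), and the latter ball misses $M$, so $p$ is the unique point of $M$ nearest to $p+t\hat v$; thus $p+t\hat v\in\UP{M,p}$. It then remains only to prove $\varrho\geq\lfs{M}{p}$, for then $s<\lfs{M}{p}\leq\varrho$ completes the inclusion.

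To prove $\varrho\geq\lfs{M}{p}$ I would assume $\varrho<\infty$ and consider $x^\star:=p+\varrho\hat v$, which satisfies $d_{\R^d}(x^\star,M)=\varrho$. If the infimum above is attained, say at $q^\star$ with $\innerprod{q^\star-p}{\hat v}>0$ (so $q^\star\neq p$), then the identity $2\varrho\innerprod{q^\star-p}{\hat v}=\|q^\star-p\|^2$ gives $\|x^\star-q^\star\|=\varrho=\|x^\star-p\|$, so $x^\star$ has two distinct nearest points, $x^\star\in\med M$, and $\lfs{M}{p}=d_{\R^d}(p,\med M)\leq\|p-x^\star\|=\varrho$. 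By compactness of $M$ the only way the infimum can fail to be attained is for a minimising sequence to converge to $p$; in that case, expanding $M$ as a graph over $\tp{p}{M}$ near $p$ identifies $x^\star$ as a focal point of $M$ along the ray $t\mapsto p+t\hat v$ (one finds $\varrho=1/\innerprod{\mathrm{II}_p(u,u)}{\hat v}$ for a limiting unit tangent direction $u$), and since focal points lie in $\overline{\med M}$ we again get $\lfs{M}{p}=d_{\R^d}(p,\overline{\med M})\leq\|p-x^\star\|=\varrho$. Combining the two inclusions proves the statement. The hard part is exactly this last step — ruling out that the largest ball tangent to $M$ at $p$ on the $\hat v$ side is capped by a focal point strictly closer than the medial axis. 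Its resolution rests on the classical fact — the real content of this clause of Federer's Theorem 4.8 — that along a unit normal ray the first focal point is no closer than the first point of the medial axis on that ray, equivalently that focal points belong to $\overline{\med M}$.
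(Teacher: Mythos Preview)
The paper does not prove this statement at all: it is quoted verbatim as \cite[Theorem~4.8(12)]{federer_curvature_1959} and used as a black box. So there is no ``paper's approach'' to compare against; your proposal is already far more detailed than anything the paper offers.

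As for the argument itself: the first inclusion $\UP{M,p}\subseteq p+N_pM$ via the first-variation/curve argument is clean and correct, and indeed needs no hypothesis on $r$. For the reverse inclusion, your reduction to showing $\varrho\geq\lfs{M}{p}$ is the right move, and the case where the infimum defining $\varrho$ is attained at some $q^\star\neq p$ is handled correctly (two distinct nearest neighbours force $x^\star\in\med M$). The residual case---a minimising sequence collapsing to $p$, so that $x^\star$ is a focal point---is, as you yourself say, exactly the substantive content of this clause of Federer's theorem. You invoke the fact that focal points lie in $\overline{\med M}$, but you do not prove it; you point back to Federer. So the proposal is an honest and accurate \emph{reduction} of the statement to that classical fact, but not a self-contained proof. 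If you want it to stand on its own, you would need to supply the missing step: for a closed $C^2$ submanifold, the normal exponential map is singular at a focal point, and a short Jacobi-field or second-variation argument then produces, for parameters just beyond $\varrho$, genuinely distinct nearest neighbours nearby---placing the focal point in $\overline{\med M}$. Absent that, what you have written is a correct outline that transparently isolates where the work lies, which is already more than the paper does.
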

	
	\noindent Below we have reproduced \cite[Lemma 5 \& Corollary 3]{boissonnat_reach_2019}.
	
	\begin{lemma} \label{lem:paralleldistortion} For any $p$ and $q$ in $M$ with $\norm{p-q} \leq 2\reach_M $, consider a geodesic $\gamma: [0,s] \to M$ (given by \Cref{thm:geomdistortion}) with $\gamma(0) = p$ and $\gamma(s) = q$. Let $v(t) \in T_{\gamma(t)}M$ be the parallel transport of a unit vector $v \in T_{\gamma(0)}M$ along $\gamma$ to $T_{\gamma(t)}M$.  Then
		\begin{align}
		\anglegeo{v(0)}{v(t)} &\leq \frac{t}{\reach_M} \qand \\
		\sin(\frac{\anglegeo{v(0)}{v(t)} }{2}) &\leq \frac{\norm{\gamma(t) - \gamma(0)}}{2\reach_M}.
		\end{align}
	\end{lemma}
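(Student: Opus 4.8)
The plan is to deduce both displayed inequalities from a single clean fact: along an arc-length parametrised geodesic of $M$, the \emph{ambient} velocity of a parallel unit vector field has norm at most $1/\reach_M$. For the setup, recall that since $\norm{p-q}\le 2\reach_M$, \Cref{thm:geomdistortion} supplies the geodesic $\gamma\colon[0,s]\to M$ with $\gamma(0)=p$ and $\gamma(s)=q$, whose length satisfies $s\le 2\reach_M\asin(\frac{\norm{p-q}}{2\reach_M})\le\pi\reach_M$; parametrise it by arc length, so $\norm{\dot\gamma(t)}\equiv 1$. Two elementary observations let us compute entirely in $\R^d$: (a) parallel transport for the Levi-Civita connection of $M$ is a linear isometry, hence $\norm{v(t)}\equiv 1$; and (b) $\R^d$ is flat, so $\anglegeo{v(0)}{v(t)}$ is simply the Euclidean angle between the unit vectors $v(0)$ and $v(t)$, equivalently the great-circle distance between $v(0)$ and $v(t)$ viewed as points of the unit sphere $\Sph^{d-1}\subset\R^d$.

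Next I would establish the key estimate $\norm{\dot v(t)}\le 1/\reach_M$, where $\dot v$ denotes the ordinary derivative in $\R^d$. Since $v$ is parallel, its covariant derivative along $\gamma$ vanishes, so the Gauss formula gives $\dot v(t)=\twoff{\dot\gamma(t)}{v(t)}$, the value at $\gamma(t)$ of the (vector-valued) second fundamental form; in particular $\dot v(t)$ is normal to $M$. It therefore suffices to prove $\norm{\twoff{u}{w}}\le 1/\reach_M$ for all unit $u,w\in\tp{x}{M}$ and all $x\in M$. This is the one place where the reach is genuinely used: by \Cref{thm:manifoldtube}, for every unit normal $n$ at $x$ and every $\varrho<\reach_M$ the open ball $\ball{\varrho}{x+\varrho n}$ is disjoint from $M$, so comparing the second-order Taylor expansion at $x$ of a unit-speed curve in $M$ with initial velocity $u$ against this rolling-ball condition and letting $\varrho\uparrow\reach_M$ yields $\norm{\twoff{u}{u}}\le 1/\reach_M$; the polarisation identity $\twoff{u}{w}=\tfrac14(\twoff{u+w}{u+w}-\twoff{u-w}{u-w})$ together with $\norm{u+w}^2+\norm{u-w}^2=4$ then upgrades this to the general bilinear bound. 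Since $\dot\gamma(t)$ and $v(t)$ are unit, $\norm{\dot v(t)}\le 1/\reach_M$ follows. (Alternatively, this second fundamental form bound may be quoted directly from \cite{federer_curvature_1959}.)

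With the key estimate in hand, the first inequality is immediate: the map $\tau\mapsto v(\tau)$ is a path on $\Sph^{d-1}$ of speed $\norm{\dot v(\tau)}\le 1/\reach_M$, and since great-circle distance on the sphere never exceeds the length of a connecting path,
\[
\anglegeo{v(0)}{v(t)}\ \le\ \int_0^t\norm{\dot v(\tau)}\,\dd\tau\ \le\ \frac{t}{\reach_M}.
\]

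For the second inequality I would run the same reasoning, on sub-intervals, with the velocity field $\dot\gamma$ in place of $v$; this is legitimate because $\dot\gamma$ is itself a parallel unit field along $\gamma$ by the geodesic equation. It gives $\anglegeo{\dot\gamma(t/2)}{\dot\gamma(\tau)}\le\abs{\tau-t/2}/\reach_M\le\pi/2$ for all $\tau\in[0,t]$ (using $t\le s\le\pi\reach_M$), so that $\innerprod{\dot\gamma(\tau)}{\dot\gamma(t/2)}=\cos\anglegeo{\dot\gamma(t/2)}{\dot\gamma(\tau)}\ge\cos(\abs{\tau-t/2}/\reach_M)\ge 0$. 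Integrating in $\tau$ and invoking Cauchy--Schwarz,
\[
\norm{\gamma(t)-\gamma(0)}\ \ge\ \innerprod{\gamma(t)-\gamma(0)}{\dot\gamma(t/2)}\ =\ \int_0^t\innerprod{\dot\gamma(\tau)}{\dot\gamma(t/2)}\,\dd\tau\ \ge\ 2\reach_M\sin(\tfrac{t}{2\reach_M}).
\]
Finally, since $\anglegeo{v(0)}{v(t)}/2\le t/(2\reach_M)\le\pi/2$ and $\sin$ is increasing on $[0,\pi/2]$, combining the last display with the first inequality yields
\[
\sin(\tfrac{\anglegeo{v(0)}{v(t)}}{2})\ \le\ \sin(\tfrac{t}{2\reach_M})\ \le\ \frac{\norm{\gamma(t)-\gamma(0)}}{2\reach_M},
\]
which is the claim. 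The main obstacle I anticipate is the operator-norm bound $\norm{\twoFF}\le 1/\reach_M$ on the second fundamental form used in the second paragraph: this is the only step doing real geometry, and the polarisation there needs a moment's care because $\twoFF$ is vector-valued; everything else is bookkeeping with the flatness of $\R^d$, the isometry of parallel transport, and one-variable integral estimates.
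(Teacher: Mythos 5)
Your argument is correct. Bear in mind that the paper does not actually prove this lemma --- it is reproduced from \cite[Lemma 5 \& Corollary 3]{boissonnat_reach_2019} and used as a black box --- so you have supplied a proof where the text only cites one; your route is essentially the one taken in that source. The single genuinely geometric input is the operator-norm bound $\norm{\twoFF}\leq 1/\reach_M$ on the second fundamental form, which via the Gauss formula and the vanishing covariant derivative of a parallel field gives the ambient speed bound $\norm{\dot v(t)}\leq 1/\reach_M$; the first inequality is then just the comparison of great-circle distance with path length on $\Sph^{d-1}$, and the second follows by applying the first to the parallel field $\dot\gamma$ and integrating the chord. Two small remarks. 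First, the bound you re-derive by the rolling-ball argument is exactly \Cref{lem:twoformbound} of the paper (quoted from \cite{niyogi_finding_2008}), and as stated there it is already the bilinear version, so your polarisation paragraph can be replaced by a citation. Second, your use of $t\leq s\leq \pi\reach_M$ is the right care point: it is what guarantees both that $\cos(\abs{\tau - t/2}/\reach_M)\geq 0$ in the chord estimate and that $\sin$ is monotone on the interval where you compare $\anglegeo{v(0)}{v(t)}/2$ with $t/(2\reach_M)$. Everything checks out.
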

	
	\noindent And finally, the following result is \cite[Proposition 6.1]{niyogi_finding_2008} 
	
	\begin{lemma} If $M \subset \R^d$ is a compact Riemannian submanifold, then for any $p\in M$ and unit vectors $u,v$ in $T_pM$, the operator norm of the second fundamental form $\twoFF: T_pM \times T_pM \to N_pM$  is bounded above by \label{lem:twoformbound}
		\begin{equation}
		\norm{\twoff{u}{v}} \leq \frac{1}{\reach_M},
		\end{equation}
	\end{lemma}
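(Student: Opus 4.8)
The plan is to first reduce the asserted operator-norm bound to the special case of equal arguments, and then to prove that special case by a contradiction argument organised around the osculating circle of a geodesic.

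\textbf{Reduction to the diagonal.} Since $\twoFF$ is a symmetric $N_pM$-valued bilinear form on $T_pM$, it suffices to bound $\norm{\twoff{u}{u}}$ by $1/\reach_M$ for every unit $u \in T_pM$. Indeed, given unit vectors $u,v$, one picks a unit normal $w \in N_pM$ with $\norm{\twoff{u}{v}} = \innerprod{\twoff{u}{v}}{w}$; then $\beta(x,y) := \innerprod{\twoff{x}{y}}{w}$ is a scalar symmetric bilinear form on $T_pM$, whose operator norm equals $\sup_{\norm{x}=1}\magn{\beta(x,x)}$ by the spectral theorem, and $\magn{\beta(x,x)} \le \norm{\twoff{x}{x}}$. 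Hence $\norm{\twoff{u}{v}} = \beta(u,v) \le \sup_{\norm{x}=1}\norm{\twoff{x}{x}}$, so the general bound follows from the diagonal one. This step is entirely routine.

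\textbf{The osculating-ball argument.} Fix a unit $u \in T_pM$, set $\kappa := \norm{\twoff{u}{u}}$ (there is nothing to prove if $\kappa = 0$), and let $n := \twoff{u}{u}/\kappa \in N_pM$ be the corresponding unit normal. Assume toward a contradiction that $\kappa > 1/\reach_M$, and choose $\rho$ with $1/\kappa < \rho < \reach_M$, which is possible since closed submanifolds have positive reach. Put $c := p + \rho n$, a point on the normal line at $p$. I would then take the unit-speed geodesic $\gamma$ of $M$ with $\gamma(0)=p$ and $\vel(0)=u$: the ambient acceleration of a geodesic is normal and equals the second fundamental form on the velocity, so $\acc(0) = \twoff{u}{u} = \kappa n$, and a second-order Taylor expansion in $\R^d$ gives $\gamma(t) = p + tu + \tfrac{t^2}{2}\kappa n + O(t^3)$. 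Expanding $\norm{\gamma(t)-c}^2$ and using $u \perp n$ yields $\norm{\gamma(t)-c}^2 = \rho^2 + t^2(1-\rho\kappa) + O(t^3)$. This is the crux of the proof: because $\rho\kappa > 1$, the quadratic coefficient is strictly negative, so for small $t \neq 0$ the point $\gamma(t) \in M$ is strictly closer to $c$ than $p$ is, i.e.\ $p \notin \Nneighs{M}{c}$. The one delicate point is ensuring the $O(t^3)$ error cannot rescue the inequality, which is exactly why $\rho$ is taken strictly larger than $1/\kappa$ rather than equal to it.

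\textbf{Contradiction via the tube theorem.} It then remains to contradict $p \notin \Nneighs{M}{c}$. Since $\reach_M = \inf_{q} \lfs{M}{q} \le \lfs{M}{p}$, I can pick $r$ with $\rho < r < \lfs{M}{p}$; then $c \in (p + N_pM) \cap \ball{r}{p}$, and \Cref{thm:manifoldtube} identifies this set with $\UP{M,p} \cap \ball{r}{p}$, so $c$ has $p$ as its unique nearest point in $M$ — contradicting the previous paragraph. Hence $\kappa \le 1/\reach_M$, and together with the reduction step this gives $\norm{\twoff{u}{v}} \le 1/\reach_M$ for all unit $u,v \in T_pM$. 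The main obstacle is the middle step — recognising the second fundamental form as the inverse radius of the osculating circle and controlling the higher-order terms — whereas the first and last steps amount to bookkeeping around the spectral theorem and Federer's tubular neighbourhood theorem, respectively.
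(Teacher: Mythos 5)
Your proof is correct: the diagonal reduction via the spectral theorem is sound, the Taylor expansion of the geodesic against the osculating centre $c = p + \rho n$ is computed correctly, and \Cref{thm:manifoldtube} delivers the contradiction since $\rho < \reach_M \leq \lfs{M}{p}$. The paper does not prove this lemma but simply cites \cite[Proposition 6.1]{niyogi_finding_2008}, and your argument is essentially the standard one given there, so there is nothing to flag.
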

	
	\subsection{Projection onto Tangent Planes}
	
	Let $M$ be a smooth closed submanifold of $\R^d$. For each point $p \in M$, we write $B_r(p)$ to indicate the Euclidean ball of radius $r > 0$ around $p$, and $\zeta_p:M \to T_pM$ to indicate the restriction of the orthoginal projection from $\R^d$ onto $T_pM$. The following result is a variant of \cite[Lemma 5.4]{niyogi_finding_2008}. 
	
	\begin{proposition}\label{prop:localdiff} For each $p \in M$ and $r < \sqrt{2} \reach_M$, the restriction of $\zeta_p$ to $\ball{r}{p} \cap M$ is a local diffeomorphism.
	\end{proposition}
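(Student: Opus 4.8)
The plan is to deduce this from the inverse function theorem: since $\ball{r}{p} \cap M$ is an open $m$-dimensional submanifold and $T_pM$ is an $m$-dimensional vector space, it suffices to show that the differential $d_q\zeta_p : T_qM \to T_pM$ is a linear isomorphism at every $q \in \ball{r}{p} \cap M$. The map $\zeta_p$ is the restriction to $M$ of the linear orthogonal projection $\pi_p : \R^d \twoheadrightarrow T_pM$, so under the usual identifications its differential at $q$ is just the restriction $\pi_p|_{T_qM}$. This restriction is injective --- hence, because the dimensions agree, an isomorphism --- precisely when $T_qM \cap N_pM = \set{0}$, i.e. when no nonzero vector in $T_qM$ is orthogonal to $T_pM$. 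So the whole proof reduces to bounding the angle between $T_qM$ and $T_pM$ away from $\pi/2$.

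To do this I would fix $q \in \ball{r}{p} \cap M$, so $\norm{p-q} < r < \sqrt{2}\,\reach_M$, and observe that $p$ and $q$ both lie in the closed ball of radius $\norm{p-q}/2 < \reach_M/\sqrt{2} < \reach_M$ centred at their midpoint. Hence \Cref{thm:geomdistortion} supplies a unit-speed geodesic $\gamma:[0,s] \to M$ with $\gamma(0)=p$, $\gamma(s)=q$, and
\[
 s = d_M(p,q) \leq 2\reach_M\,\asin\!\qty(\frac{\norm{p-q}}{2\reach_M})
   < 2\reach_M\,\asin\!\qty(\frac{1}{\sqrt{2}}) = \frac{\pi}{2}\,\reach_M,
\]
where the threshold $\sqrt 2$ is calibrated exactly so that $\asin(1/\sqrt2)=\pi/4$. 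Now given any unit vector $v \in T_qM$, let $u \in T_pM$ be the (unique, unit) vector whose parallel transport along $\gamma$ equals $v$ --- possible since parallel transport is an isometric isomorphism $T_pM \to T_qM$ --- and apply \Cref{lem:paralleldistortion} to get $\anglegeo{u}{v} \leq s/\reach_M < \pi/2$.

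Finally, since parallel transport in the ambient Euclidean space acts trivially on vectors, $\innerprod{u}{v} = \cos\anglegeo{u}{v} > 0$; and because $u \in T_pM$ is orthogonal to $N_pM$, we have $\innerprod{u}{v} = \innerprod{u}{\pi_p(v)}$, which forces $\pi_p(v) \neq 0$. As every unit vector of $T_qM$ arises this way, $\pi_p|_{T_qM} = d_q\zeta_p$ has trivial kernel, hence is a linear isomorphism, and the inverse function theorem yields that $\zeta_p$ restricts to a local diffeomorphism on $\ball{r}{p}\cap M$. I expect the only delicate points to be bookkeeping: checking that the midpoint ball has radius \emph{strictly} below $\reach_M$ so that \Cref{thm:geomdistortion,lem:paralleldistortion} genuinely apply, and verifying that the $\sqrt{2}$ bound on $r$ is exactly what propagates --- via $2\reach_M\asin(1/\sqrt2) = \tfrac{\pi}{2}\reach_M$ and then division by $\reach_M$ --- to the strict inequality $\anglegeo{u}{v} < \pi/2$ that makes the projection nondegenerate.
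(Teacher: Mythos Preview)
Your argument is correct and follows essentially the same route as the paper: both proofs reduce to showing $d_q\zeta_p$ is injective by bounding the angle between a vector in $T_qM$ and its parallel transport in $T_pM$ via \Cref{thm:geomdistortion} and \Cref{lem:paralleldistortion}, with the threshold $\sqrt{2}\,\reach_M$ arising from $2\reach_M\arcsin(1/\sqrt{2})=\tfrac{\pi}{2}\reach_M$. The only difference is packaging --- the paper argues by contradiction (assume a kernel vector, force $\anglegeo{u}{v}=\pi/2$, then invert the arc-length bound to get $r\geq\sqrt{2}\,\reach_M$), whereas you argue directly (bound $s<\tfrac{\pi}{2}\reach_M$ first, deduce $\anglegeo{u}{v}<\pi/2$, then read off $\pi_p(v)\neq 0$ from $\innerprod{u}{v}>0$).
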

	\begin{proof} It suffices to show that the Jacobian $\dd{{}_q\zeta_p}$ is injective at all $q \in \ball{r}{p} \cap M$, as the dimensions of the domain and codomain are the same (see \cite[Proposition 4.8]{lee_introduction_2012}).
		
		Suppose $\dd{\zeta_p}$ is singular at some $q \in \ball{r}{p}$. Then there is some unit vector $u \in T_qM$, when parallel transported in the ambient Euclidean space to $T_p\R^d$ along the line segment $\overline{qp}$,  is orthogonal to $T_pM \subset T_p\R^d$. As $\norm{p-q} < 2\reach_M$,  we can apply \Cref{thm:geomdistortion} and infer that there is a geodesic $\gamma: [0,s] \to M$ where $\gamma(0) = q$ and $\gamma(s) = p$. Let $v \in T_pM$ be the parallel transport of $u \in T_qM$ along this geodesic.
		
		As $u \in N_pM$, it is orthogonal to $v \in T_pM$ and we have $\anglegeo{u}{v} = \frac{\pi}{2}$. Applying the bound for $\anglegeo{u}{v}$ in \Cref{lem:paralleldistortion}, we have
		\begin{equation*}
		\frac{t}{\reach_M} \geq \frac{\pi}{2}.
		\end{equation*}
		Substituting this into \Cref{thm:geomdistortion}, we obtain
		\begin{equation*}
		r \geq 2\reach_M \sin(\frac{t}{2\reach_M}) \geq 2\reach_M \sin(\frac{\pi}{4}) = \sqrt{2}\reach_M.
		\end{equation*}
		which contradicts our assumption that $r < \sqrt{2}\reach_M$; thus, $\dd{\zeta_p}$ is injective at $q$ as desired.
	\end{proof}
	
	Next we prove a slight extension of this result. 
	\begin{figure}
		\centering
		\begin{subfigure}[b]{0.45\textwidth}
			\includegraphics[width = 0.85\textwidth]{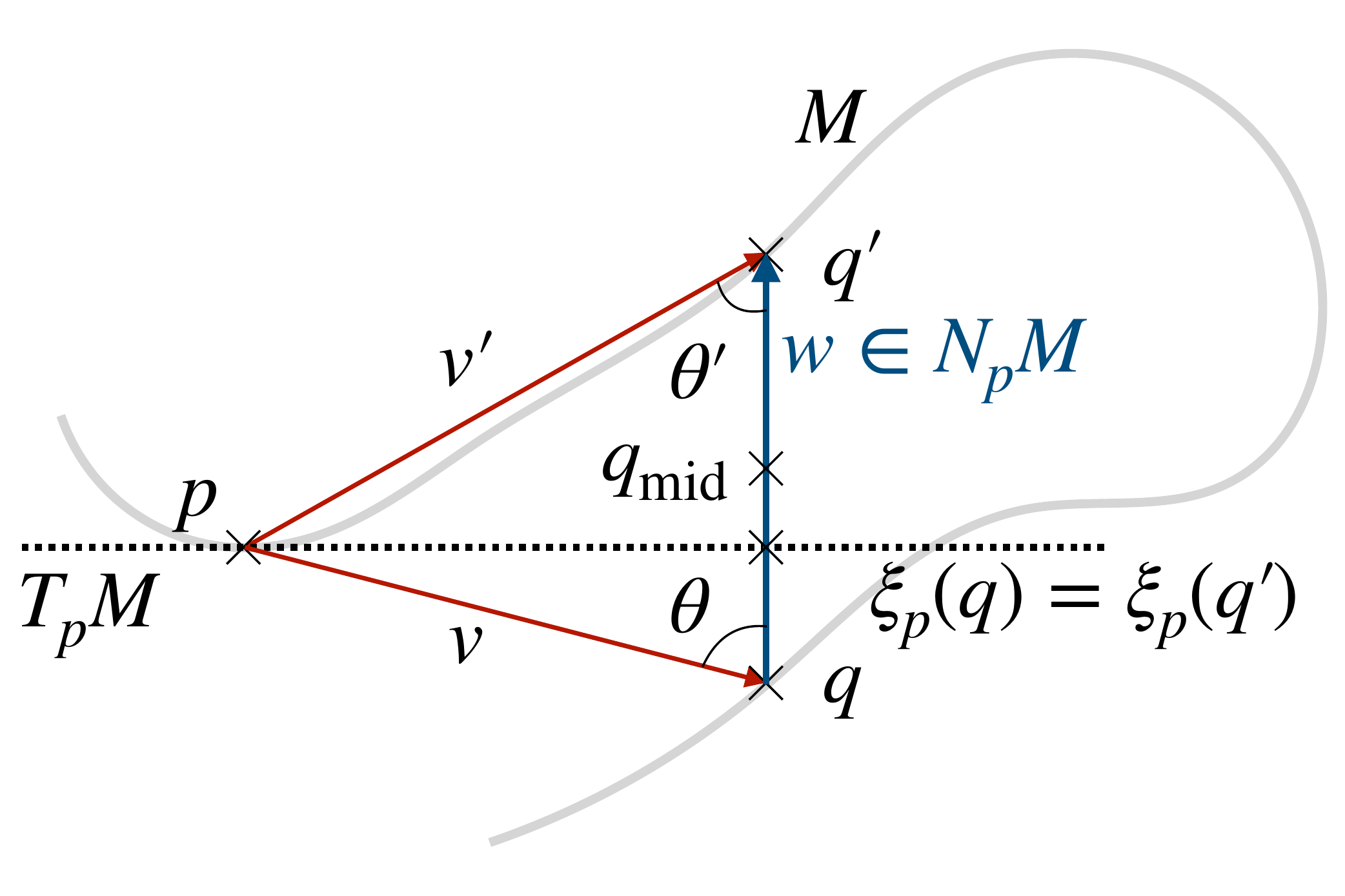}
			\caption{Illustration of a geometric argument in deriving \Cref{eqstep:tproj2} in the proof of \Cref{prop:ballembed}. N.B. for ambient dimension $d \geq 2$, $\zeta_p(q)$ need not lie on the line joining $q$ and $q'$. }
			\label{fig:eqstep:tproj2}
		\end{subfigure}\hfill
		\begin{subfigure}[b]{0.45\textwidth}
			\includegraphics[width = 0.85\textwidth]{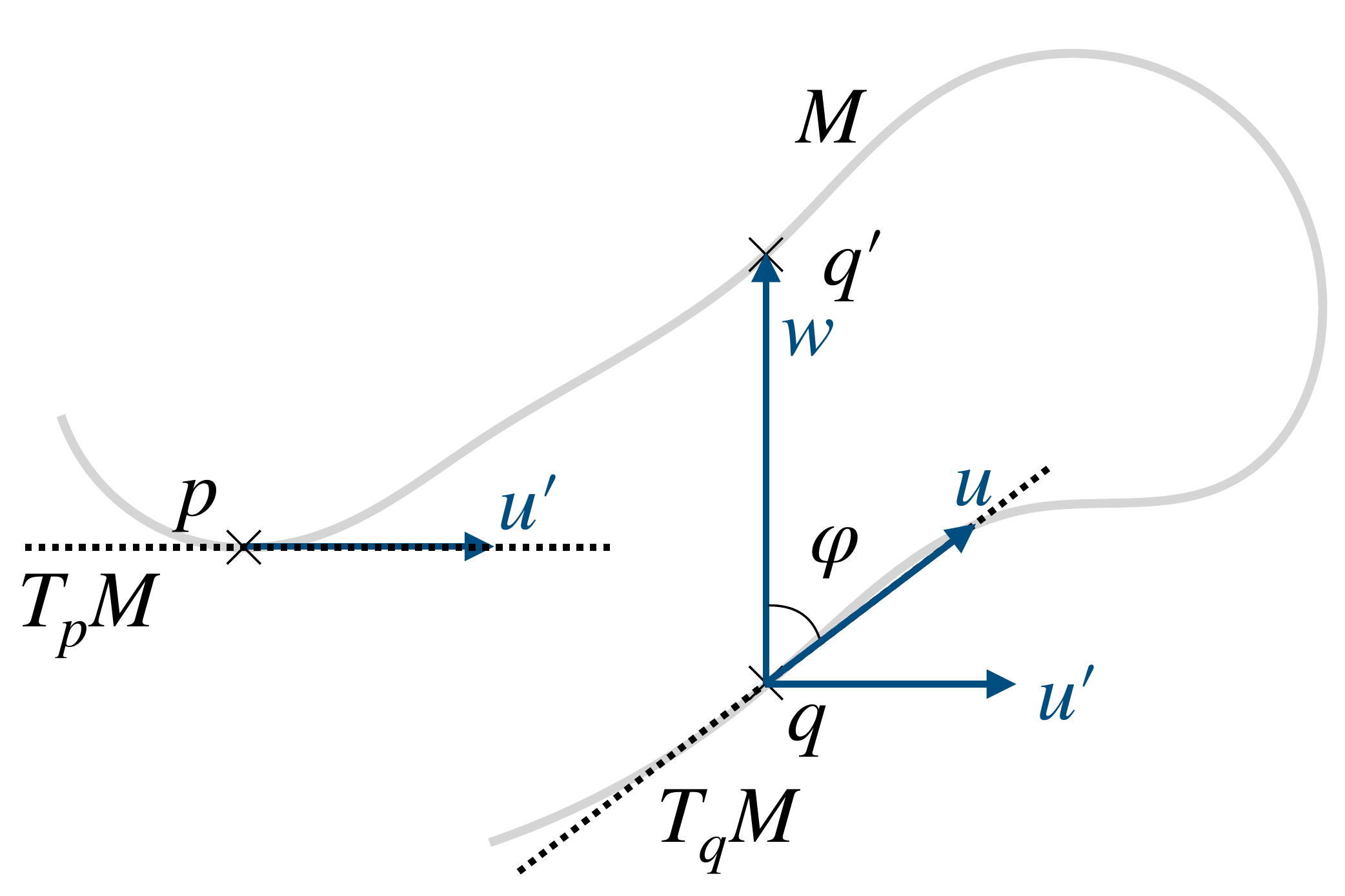}
			\caption{Illustration of a geometric argument in deriving \Cref{eqstep:tproj3} in the proof of \Cref{prop:ballembed}.}
			\label{fig:eqstep:tproj3}
		\end{subfigure}
		\caption{Geometric illustrations for the proof of \Cref{prop:ballembed}.}
	\end{figure}
	
	\begin{restatable}{proposition}{ballembed}
		\label{prop:ballembed}
		For all $p \in M$, the map $\zeta_p$ is a smooth embedding of $\ball{\reach_M}{p} \cap M$ into $\tp{p}{M}$.
	\end{restatable}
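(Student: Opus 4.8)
The plan is to upgrade Proposition~\ref{prop:localdiff} from a \emph{local} diffeomorphism statement on $\ball{\sqrt2\reach_M}{p}\cap M$ to a statement that $\zeta_p$ is \emph{injective} — and hence, being an injective immersion from a set that we may treat as a manifold with corners of the same dimension as $\tp{p}{M}$, a smooth embedding — on the smaller ball $\ball{\reach_M}{p}\cap M$. Since smoothness and the immersion property on $\ball{\reach_M}{p}\cap M\subset \ball{\sqrt2\reach_M}{p}\cap M$ come for free from Proposition~\ref{prop:localdiff}, the entire content is injectivity. So I would suppose, for contradiction, that there are distinct points $q,q'\in\ball{\reach_M}{p}\cap M$ with $\zeta_p(q)=\zeta_p(q')$; this means the chord $q-q'$ is orthogonal to $\tp{p}{M}$, i.e. $q-q'\in N_pM$ after parallel transport to $T_p\R^d$ (it is literally a vector in $T_p\R^d=\R^d$ lying in the normal space).

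The core of the argument is a geometric distortion estimate, exactly in the spirit of the two figures flagged in the excerpt (\Cref{fig:eqstep:tproj2} and \Cref{fig:eqstep:tproj3}). Since $\norm{q-q'}<2\reach_M$ (both points lie in a ball of radius $\reach_M$ about $p$, so their distance is less than $2\reach_M$), \Cref{thm:geomdistortion} gives a geodesic $\gamma:[0,s]\to M$ from $q$ to $q'$ with $s=d_M(q,q')\le 2\reach_M\asin(\norm{q-q'}/2\reach_M)$, and this geodesic stays inside $\ball{r}{x}\cap M$ for a suitable small ball. Let $v(t)=\dot\gamma(t)/\norm{\dot\gamma(t)}$ be the unit tangent along $\gamma$. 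I would bound the angle that the chord direction $\normvec{q'-q}$ makes with $\tp{p}{M}$ two ways. On the one hand, integrating $\gamma'(t)=\zeta$-lift and using \Cref{lem:paralleldistortion}, the tangent directions $v(t)$ all lie within angle $s/\reach_M$ of $v(0)\in T_qM$, and then transporting $T_qM$ to $T_pM$ along the chord costs at most another controlled angle (here one uses \Cref{lem:paralleldistortion} again, or \Cref{thm:manifoldtube} to relate normal directions at $q$ and $p$); hence the chord $q'-q=\int_0^s\dot\gamma(t)\,dt$ cannot be exactly normal to $\tp{p}{M}$ unless $s/\reach_M$ is large. Quantitatively, the standard computation (as in \cite{niyogi_finding_2008,boissonnat_reach_2019}) forces $s\ge (\pi/2)\reach_M$, and then $\norm{q-q'}\ge 2\reach_M\sin(s/2\reach_M)\ge 2\reach_M\sin(\pi/4)=\sqrt2\,\reach_M$. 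But $q,q'\in\ball{\reach_M}{p}$ gives $\norm{q-q'}<2\reach_M$; to get the sharper $\sqrt2\reach_M$ contradiction one instead compares $q'-q$ against the \emph{normal direction at the nearest-point structure}, i.e. one uses that $q$ has $\UP{M,q}\cap\ball{\reach_M}{q}=(q+N_qM)\cap\ball{\reach_M}{q}$ from \Cref{thm:manifoldtube}, so a purely normal chord of length $<\reach_M$ would place $q'$ on the normal fibre through $q$, which is impossible for $q'\in M$ distinct from $q$ unless $\norm{q-q'}\ge\lfs{M}{q}\ge\reach_M$ — already a contradiction with $\norm{q-q'}<\norm{q-p}+\norm{p-q'}<2\reach_M$? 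No: I would instead run the chord-vs-tangent angle bound to squeeze out $\norm{q-q'}\ge\sqrt2\reach_M$, contradicting $\norm{q-q'}<2\reach_M$ only if the constant is right, so the clean route is: the chord makes angle $<\pi/2$ with $T_pM$ whenever $\norm{q-q'}<\sqrt2\reach_M$, by the same $\asin$-estimate used in Proposition~\ref{prop:localdiff}, and angle $<\pi/2$ contradicts orthogonality.

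Concretely, then, the key steps in order are: (1) reduce to injectivity using Proposition~\ref{prop:localdiff} for the immersion/smoothness part and the fact that an injective immersion of a compact (manifold-with-corners) domain is an embedding; (2) assume $\zeta_p(q)=\zeta_p(q')$ with $q\ne q'$, so $u:=\normvec{q'-q}\in N_pM$; (3) invoke \Cref{thm:geomdistortion} to get a short geodesic $\gamma$ from $q$ to $q'$ of length $s\le 2\reach_M\asin(\norm{q-q'}/2\reach_M)$; (4) use \Cref{lem:paralleldistortion} to show every tangent direction $\dot\gamma(t)$, parallel-transported to $T_p\R^d$, makes angle at most $s/\reach_M$ with $T_pM$, so the average direction $u$ makes angle $\le s/\reach_M<\pi/2$ with $T_pM$ whenever $\norm{q-q'}<\sqrt2\reach_M$; (5) since $\norm{q-q'}\le\norm{q-p}+\norm{p-q'}<2\reach_M$ is not quite enough, note both points lie in $\ball{\reach_M}{p}$ so in fact a more careful triangle estimate — or directly comparing $q-q'$ with $q-p$ and $q'-p$ via the half-angle bound $\sin(\anglegeo{\cdot}{\cdot}/2)\le\norm{\cdot-\cdot}/2\reach_M$ of \Cref{lem:paralleldistortion} applied to the geodesics $p\leadsto q$ and $p\leadsto q'$ — yields $\norm{q-q'}<\sqrt2\reach_M$; (6) conclude the chord direction $u$ is \emph{not} orthogonal to $T_pM$, contradicting $u\in N_pM$, so $\zeta_p$ is injective. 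The main obstacle I anticipate is step~(5): getting the constant $\reach_M$ (rather than $\sqrt2\reach_M/2$ or some weaker bound) on the radius to be exactly what makes the chord-angle strictly less than $\pi/2$. This is where the geometric pictures do real work — one has to bound $\norm{q-q'}$ in terms of $\norm{q-p}$ and $\norm{q'-p}$ together with the angular spreading of $T_pM$-directions, and carefully track that two radius-$\reach_M$ excursions from $p$ compose to a chord shorter than $\sqrt2\reach_M$ in the direction transverse to $T_pM$. I expect this to follow from a clean application of the half-angle sine bound in \Cref{lem:paralleldistortion} to each of the geodesics from $p$, projected onto the normal space.
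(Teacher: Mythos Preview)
Your reduction to injectivity is correct, and your instinct that the chord $w=q-q'$ lies in $N_pM$ is the right starting observation. But the heart of your argument --- steps (4) and (5) --- does not close, and the gap is not merely a matter of constants.

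First, step (4) misapplies \Cref{lem:paralleldistortion}. That lemma bounds the angular drift of a vector parallel-transported \emph{along the geodesic $\gamma$ from $q$ to $q'$}; it says nothing about the angle any $\dot\gamma(t)$ makes with $T_pM$, since $p$ is not on $\gamma$. To bound $\anglegeo{\dot\gamma(t)}{T_pM}$ you would need to transport along a geodesic from $\gamma(t)$ to $p$, which has length governed by $\norm{\gamma(t)-p}<r$, not by $s$. Even after this correction, knowing that each $\dot\gamma(t)$ makes angle strictly less than $\pi/2$ with $T_pM$ does \emph{not} force the integral $w=\int_0^s\dot\gamma\,dt$ to do so: the tangential components of the $\dot\gamma(t)$ can cancel, leaving $w$ purely normal. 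So the averaging step fails in principle, not just in the constants. Second, your step (5) assertion that $\norm{q-q'}<\sqrt{2}\,\reach_M$ follows from ``a more careful triangle estimate'' is unsupported; from $q,q'\in\ball{\reach_M}{p}$ and $w\in N_pM$ alone you cannot do better than $\norm{w}<2\reach_M$.

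What you are missing is \Cref{lem:angletonormal} (Federer's angle-to-normal inequality), which the paper uses twice and which does the real quantitative work. Applied at $p$ with the normal direction $w\in N_pM$, it bounds the angles $\theta=\anglegeo{w}{p-q}$ and $\theta'=\anglegeo{w}{q'-p}$ by $\cos\theta,\cos\theta'<r/(2\reach_M)$; the cosine rule on the triangle $pqq'$ then yields the sharp estimate $\norm{w}<r^2/\reach_M$ (this is \cref{eqstep:tproj2}, and it is what replaces your hoped-for $\sqrt{2}\,\reach_M$ bound). A midpoint argument using \Cref{thm:manifoldtube} then shows $w$ cannot lie entirely in $N_qM$, so it has a nonzero $T_qM$-component $u$; a second application of \Cref{lem:angletonormal}, now at $q$, bounds $\sin\anglegeo{u}{w}<r^2/(2\reach_M^2)$. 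Only then does parallel transport (of $u$ from $q$ to $p$, a single geodesic) and the spherical triangle inequality combine with $w\perp T_pM$ to force $r>\reach_M$. Your proposal never invokes \Cref{lem:angletonormal}, and without it the argument cannot be completed.
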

	\begin{proof} Consider $r < \sqrt{2}\reach_M$. It sufficies to show that $\zeta_p$ restricted to $\ball{r}{p} \cap M$ is a smooth immersion (i.e. $\dd{\zeta_p}$ is injective), that $\zeta_p$ is an open map, and that $\zeta_p$ is injective (see \cite[Proposition 4.22]{lee_introduction_2012}). As $\zeta_p$ is a local diffeomorphism on $\ball{r}{p}$ for  $r < \sqrt{2}\reach_M$ (see \Cref{prop:localdiff}), it is an open map (\cite[Proposition 4.6]{lee_introduction_2012}). Thus all that remains is to show that $\zeta_p$ is injective for $r$ sufficiently small.
		
		Suppose $\zeta_p$ is not injective on $\ball{r}{p} \cap M$ and consider the illustration in \Cref{fig:eqstep:tproj2}.  Suppose there are two distinct points $q$ and $q'$ in $\ball{r}{p} \cap M$ that project onto the same point in $\zeta_p(q) = \zeta_p(q') \in \tp{p}{M}$. Since \Cref{thm:manifoldtube} implies $q - \zeta_p(q)$ and $q'- \zeta_p(q')$ are vectors in $N_pM$, the vector $w = q - q' = (q - \zeta_p(q)) - (q' - \zeta_p(q')$ is also a vector in $N_pM$.
		
		As a shorthand, let $v = q-p$ and $v'=  q'-p$. Consider then $\theta = \anglegeo{w}{-v}$ and $\theta' = \anglegeo{w}{v'}$; applying \Cref{lem:angletonormal},
		\begin{align} \label{eqstep:tproj1}
		\cos(\theta) = \innerprod{\normvec{-v}}{\normvec{w}} \leq \frac{\norm{v}}{2\reach_M} < \frac{r}{2\reach_M},
		\end{align}
		and similarly, $\cos(\theta') < \frac{r}{2\reach_M}$. As $r  <\reach_M$, angles $\theta$ and $\theta'$ must be strictly greater than $\frac{\pi}{3}$.
		Consider then the triangle formed by $p$, $q$ and $q'$. Applying the cosine rule, we have
		\begin{align*}
		\norm{w}^2 &= \norm{v}^2 + \norm{v'}^2 - 2 \norm{v}\norm{v'} \cos(\pi -( \theta + \theta')) \\ &< 2r^2(1+\cos(\theta+\theta')) \\
		& = 4r^2 \cos[2](\frac{\theta + \theta'}{2}).
		\end{align*}
		The inequality is due to the bound $q$ and $q'$ being in $\ball{r}{p}$, whence $\norm{v} = \norm{q-p}$ and $\norm{v'} = \norm{q' -p}$ are both strictly less than $r$. Maximising the right hand side by choosing $\theta$ and $\theta'$ to be as small as possible subject to the constraints of \cref{eqstep:tproj1}, we obtain
		\begin{align}
		\norm{w} <\frac{r^2}{\reach_M}. \label{eqstep:tproj2}
		\end{align}
		Suppose now $w \in N_qM$ and consider the point $q_\text{mid} = q + \frac{w}{2} = \frac{q+q'}{2}$. Since we have assumed $r < \sqrt{2}\reach_M$, \cref{eqstep:tproj2} implies $\norm{w} < 2\reach_M$, and thus $\norm{q_\text{mid}-q} < \reach_M$. Since we have assumed  $(q_\text{mid} - q) \propto w \in N_qM$,  \Cref{thm:manifoldtube} implies  $q_\text{mid}$ has a unique projection onto $M$, which is $q$. However, $q_\text{mid}$ is equidistant to both $q$ and $q'$ which lie in $M$; hence we reach a contradiction and $w$ must thus have a non-zero component in $T_qM$ (as a vector subspace of $T_q\R^d$).
		
		Consider the illustration in \Cref{fig:eqstep:tproj3}. Let $u$ be the non-zero projection of $w$ onto $T_qM$ and let $\phi = \anglegeo{u}{w}$. If $w = u$, then $\phi = 0$; else, if $w - u \neq 0$, $w$ has a component $w-u$ in $N_qM$. Thus we can apply \Cref{lem:angletonormal} once again and obtain a
		\begin{align}
		\sin(\phi) &= \cos(\frac{\pi}{2} - \phi)  = \innerprod{\normvec{w}}{\normvec{w-u}} \leq \frac{\norm{w}}{2\reach_M} < \frac{r^2}{2\reach_M^2} &  \label{eqstep:tproj3}
		\end{align}
		where we substituted \cref{eqstep:tproj2} in the final line.
		
		As $\norm{p-q} < \sqrt{2}\reach_M$, there is a geodesic $\gamma$ on $M$ such that $\gamma(0) = q$ and $\gamma(s) = p$ due to \Cref{thm:geomdistortion}. Let us parallel transport $u \in T_qM$ along $\gamma$ to $T_pM$. Let $u'$ be the transported vector in $T_pM$. Applying \Cref{lem:paralleldistortion},
		\begin{equation*}
		\sin(\frac{\anglegeo{u}{u'}}{2}) \leq \frac{r}{2\reach_M} \implies \cos(\anglegeo{u}{u'}) \geq 1-\frac{1}{2}\qty(\frac{r}{\reach_M})^2.
		\end{equation*}
		As $r  < \sqrt{2} \reach_M$, the right hand side is positive and hence $\anglegeo{u}{u'}< \frac{\pi}{2}$.
		
		The triangle inequality on $\S^{d-1}$ implies
		\begin{equation}
		\phi = \anglegeo{u}{w} \geq \anglegeo{u'}{w} - \anglegeo{u}{u'} = \frac{\pi}{2} - \anglegeo{u}{u'} \label{eqstep:tproj4}
		\end{equation}
		where the last equality is due to $u' \in T_pM$ and $w \in N_pM$. As $u$ is a non-zero projection of $w$ onto $T_qM$, the angle $\phi = \anglegeo{u}{w}$ between $u$ and $w$  is at most $ \phi < \frac{\pi}{2}$. In addition, as we have shown that  $\anglegeo{u}{u'}< \frac{\pi}{2}$, the sine function is montonic on both sides of the inequality. Hence, we can combine the inequalities of \Cref{eqstep:tproj3,eqstep:tproj4} and obtain the following:
		\begin{align*}
		\frac{r^2}{2\reach_M^2} &> \sin(\phi) \geq \sin(\frac{\pi}{2} - \anglegeo{u}{u'}) = \cos(\anglegeo{u}{u'})  \geq 1-\frac{1}{2}\qty(\frac{r}{\reach_M})^2. \\
		\implies r &> \reach_M.
		\end{align*}
		We have thus shown that any two points $q$ and $q'$ that project onto the same point in $T_pM$ must be at least $\reach_M$ away from $p$. Thus the projection is injective on $\ball{r}{p} \cap M$.
	\end{proof}

	\subsection{Bounding the Reach of Regular Intersections}
	
	It will be helpful to first focus on the cases where the regular intersection at hand is a level set or sublevel set of a single smooth function $f: M \to \R$ on a compact submanifold $M$ of $\R^d$. We suppose without loss of generality that $0$ is a {regular value} of $f$. Consequently, the level set $\level{f}{0}$ is a codimension-1 submanifold of $M$, and $\sublevel{f}{0}$ is a codimension-0 submanifold of $M$ with $\level{f}{0}$ as its boundary (see \cite{milnor1997topology}). Since $f$ is continuous, both sets $\level{f}{0}$ and $\sublevel{f}{0}$ are closed in $M$. As $M$ is compact in $\R^d$, it follows that these sets are also compact. Thus $\level{f}{0}$ is a compact submanifold of $\R^d$, and therefore $\level{f}{0}$ has positive reach.
	
	\begin{proposition} \label{lem:fsingle} Suppose $f:M \to \R$ is smooth on a positive reach closed submanifold $M \subset \R^d$. Consider $x \in \R^d$ and $p \in \level{f}{0}$. Assume $\norm{\nabla_p f} = \mu > 0$. Then:
		\begin{enumerate}[ref= Proposition \thelemma (\roman*), label={\textup{(\roman*)}}]
			\item \label{lem:fsingle1} If $p \in \Nneighs{\level{f}{0}}{x}$, then $x-p = n + \lambda \nabla_p f$ where $n \in N_pM$.
			\item \label{lem:fsingle2}Let $\Lambda$ be an upper bound on the norm of the Hessian of $f$ on $\ballc{\reach_M}{x} \cap M$ and
			\begin{equation}
			\frac{1}{\rho} =  \frac{\Lambda}{\mu}  +  \frac{1}{\reach_M}. \label{eq:rho}
			\end{equation}
			If $x-p = n + \lambda \nabla_p f$ and $\norm{x-p} < \rho$, then $\projmap{\level{f}{0}}{x} = p$.
			\item \label{lem:fsingle3} Consequently, we have
			\begin{equation}
			\UP{\level{f}{0},p} \cap \ball{\rho}{p} = \sett{x \in \ball{\rho}{p}}{ x-p= n+\lambda \nabla_p f \text{ and }  n \in N_pM},
			\end{equation}
			along with ${\fakelfs{\level{f}{0}}{p}} \geq \rho$.
			\item \label{lem:fsingle4} Consider some $x\in \R^d$ such that $\ball{r}{x} \cap M \neq \emptyset$, and $r < \reach_M$. If $\ballc{r}{x} \cap \level{f}{0} = p$ and $\ball{r}{x} \cap \level{f}{0} = \emptyset$,  then $f$ is either non-negative or non-positive on $\ball{r}{x} \cap M$.
		\end{enumerate}
		\begin{proof}
			Let $g(y) =\frac{\norm{x-y}^2}{2}$ and $\tilde{g}: M \to [0, \infty)$ be the restriction of $g$ to $M$. We note that $\nabla \tilde{g}$ is the projection of $x-p$ onto $T_pM$.
			\begin{enumerate}[label={\textup{(\roman*)}}]
				\item If $\nabla_p \tilde{g} = 0$, then $x-p \bot T_pM$; else, $\level{\tilde{g}}{r^2/2}$ is tangent to $\level{f}{0}$ at $p$, and $\nabla_p \tilde{g}$ is parallel with $\nabla_p f$. As $\nabla \tilde{g}$ is the projection of $x-p$ onto $T_pM$, we can thus write $x-p = n + \lambda \nabla_p f$ where $n \in N_pM$.
				\item As $\level{f}{0}$ is closed, $x$ must have a nearest neighbour in $\level{f}{0}$. Suppose $p \notin \Nneighs{\level{f}{0}}{x}$ and consider $q \in \Nneighs{\level{f}{0}}{x}$. Then $\norm{x-q} \leq \norm{x-p} = r$.  Suppose $r < \reach_M$. Then, $\ballc{r}{x} \cap M$ is connected by \Cref{thm:geomdistortion}. Thus there is a geodesic $\gamma :[0,s] \to M$ between $\gamma(0) = p$ and $\gamma(s) = q$ parametrised by arc length.
				
				Consider $\hat{f} = f \circ \gamma: [0,s]  \to \R$ and $\hat{g} = g \circ \gamma: [0,s]  \to \R$. Then by definition $\hat{f}(0) = \hat{f}(s)$ and $\hat{g}(0) \geq \hat{g}(s)$. If we Taylor expand $\hat{f}$ and $\hat{g}$, we have for some $s_1$ and $s_2$ in $(0,s)$
				\begin{align*}
				\hat{f}(s) &= \hat{f}(0) + \dv{\hat{f}}{t}\eval_{t=0} s +  \frac{1}{2}\dv[2]{\hat{f}}{t}\eval_{t=s_1} s^2  \implies \dv{\hat{f}}{t}\eval_{t=0}  +  \frac{1}{2}\dv[2]{\hat{f}}{t}\eval_{t=s_1} s = 0 \\
				\hat{g}(s) &= \hat{g}(0) + \dv{\hat{g}}{t}\eval_{t=0} s +  \frac{1}{2}\dv[2]{\hat{g}}{t}\eval_{t=s_2 } s^2  \implies \dv{\hat{g}}{t}\eval_{t=0} +  \frac{1}{2}\dv[2]{\hat{g}}{t}\eval_{t=s_2 } s \leq 0.
				\end{align*}
				Evaluating the derivatives, and substituting $x-p =  n + \lambda \nabla_p f$, we obtain
				\begin{align*}
				\innerprod{\vel(0)}{\nabla_p f}  +  \frac{1}{2}\innerprod{\vel(s_1)}{\nabla_{\vel (s_1)}\nabla f (\gamma(s_1))}  s &= 0  \qand \\
				\lambda \innerprod{\vel(0)}{\nabla_p f} +  \frac{1}{2} \qty(1+ \innerprod{\gamma(s_2) - x}{ \acc (s_2)})s &\leq 0, \\
				\implies 1 - \lambda \innerprod{\vel(s_1)}{\nabla_{\vel (s_1)}\nabla f (\gamma(s_1))}  + \innerprod{\gamma(s_2) - x}{ \acc (s_2)} &\leq 0.
				\end{align*}
				As $\ball{t}{x} \cap M$ is geoesically convex for $t < \reach_M$ and $\norm{x-q} \leq \norm{x-p} < \reach_M$, we thus have $\gamma([0,s]) \subset \ballc{r}{x}$. Hence $\norm{\gamma(s_2) - x} \leq r$. Applying the bound on $\norm{\acc}$ \Cref{lem:twoformbound} and the Hessian of $f$, we have
				\begin{equation*}
				{\abs{\lambda}}{\Lambda}  +  \frac{r}{\reach_M} \geq 1.
				\end{equation*}
				In addition, since $x-p = n + \lambda \nabla_p f$ and $n \perp \nabla_p f$, we have $\abs{\lambda} \leq \frac{r}{\mu}$. Thus the existence of $q \in \Nneighs{\level{f}{0}}{x}$ not equal to $p$, and  $\norm{x-q} \leq \norm{x-p} = r$ implies
				\begin{equation*}
				r \geq \qty(\frac{\Lambda}{\mu} +  \frac{1}{\reach_M})^{-1} =: \rho
				\end{equation*}
				Hence, if $r = \norm{x-p} < \rho$, the point $x$ must project onto $\level{f}{0}$ at $p$.
				\item As a consequence of \Cref{lem:fsingle1,lem:fsingle2}, any $x \in \ball{\rho}{p}$ that projects onto $\level{f}{0}$ at $p$ is a linear combination of some $n \in N_pM$ and $\nabla_p f$, and vice versa any $x \in \ball{\rho}{p}$ that is a linear combination of some $n \in N_pM$ and $\nabla_p f$ projects onto $\level{f}{0}$ at $p$.
				
				For $x \in \med{\level{f}{0}, {p}}$, \Cref{lem:fsingle1} implies $x-p = n + \lambda \nabla_p f$. As $x \notin \UP{\level{f}{0},p}$, $\norm{x-p} \geq  \rho$. Thus  ${\fakelfs{\level{f}{0}}{p}} \geq \rho$.
				
				\item  Since $r < \reach_M$, then \Cref{thm:geomdistortion} implies $\ballc{r}{x} \cap M$ is connected. We claim that $\ball{r}{x} \cap \level{f}{0} = \emptyset$ implies the sign of $f$ must be non-positive or non-negative on $\ballc{r}{x} \cap M$. Suppose that is not the case; then there are two points of opposite sign in $\ballc{r}{x} \cap M$. Consider a path connecting those two points contained in $\ball{r}{x} \cap M$. As $f$ is continuous, there must be some point along the path where $f$ is zero. However this contradicts $\ball{r}{x} \cap \level{f}{0} = \emptyset$.
			\end{enumerate}
		\end{proof}
	\end{proposition}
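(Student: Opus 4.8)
The plan is to handle the four parts in order, using throughout the restricted squared-distance function $\tilde{g}\colon M \to [0,\infty)$ given by $\tilde{g}(y) = \tfrac12\norm{x-y}^2$, whose gradient at a point $q\in M$ is the orthogonal projection of $x-q$ onto $T_qM$; consequently the ambient vector $x-q$ always decomposes as $\nabla_q\tilde{g}$ plus a vector lying in $N_qM$. For part (i) I would argue by first-order optimality: if $p$ is a nearest neighbour of $x$ in $\level{f}{0}$, then $p$ minimises $\tilde{g}$ over the hypersurface $\level{f}{0}\subseteq M$ and so is a critical point of the restriction; since $\level{f}{0}$ is cut out of $M$ by $f$ at the regular value $0$, its conormal line in $M$ at $p$ is spanned by $\nabla_p f$, hence $\nabla_p\tilde{g} = \lambda\nabla_p f$ for some scalar $\lambda$ (vacuously if $\nabla_p\tilde{g}=0$, i.e.\ $x-p\perp T_pM$). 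Combined with the decomposition above this gives $x-p = n+\lambda\nabla_p f$ with $n\in N_pM$.

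Part (ii) is the crux. I would suppose for contradiction that $p$ is not the unique nearest neighbour of $x$ in $\level{f}{0}$ and pick an actual nearest neighbour $q\neq p$, so that $\norm{x-q}\leq\norm{x-p}=r$ with $r<\rho\leq\reach_M$ (the last inequality because $1/\rho = \Lambda/\mu + 1/\reach_M \geq 1/\reach_M$). Since $r<\reach_M$, \Cref{thm:geomdistortion} supplies a unit-speed geodesic $\gamma\colon[0,s]\to M$ from $p$ to $q$ staying inside $\ball{r}{x}\cap M \subseteq \ballc{\reach_M}{x}\cap M$. Put $\hat{f} = f\circ\gamma$ and $\hat{g}(t) = \tfrac12\norm{x-\gamma(t)}^2$, so $\hat{f}(0) = \hat{f}(s) = 0$ and $\hat{g}(0) = r^2/2 \geq \hat{g}(s)$; second-order Taylor expansions at $t=0$ then force, for suitable $s_1,s_2\in(0,s)$, the identity $\hat{f}'(0) = -\tfrac12\hat{f}''(s_1)s$ and the inequality $\hat{g}'(0) + \tfrac12\hat{g}''(s_2)s \leq 0$. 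The next step is to compute the pieces: $\hat{f}'(0) = \innerprod{\vel(0)}{\nabla_p f}$; because $\gamma$ is a geodesic of $M$, its ambient acceleration $\acc$ is everywhere normal to $M$ and hence orthogonal to $\nabla f$ (which is tangent to $M$), so $\hat{f}''(t)$ equals the Hessian of $f$ evaluated on $\vel(t)$, giving $\abs{\hat{f}''}\leq\Lambda$ along $\gamma$; substituting $x-p = n+\lambda\nabla_p f$ with $n\perp\nabla_p f$ gives $\hat{g}'(0) = \innerprod{\vel(0)}{\operatorname{proj}_{T_pM}(x-p)} = \lambda\innerprod{\vel(0)}{\nabla_p f} = \lambda\hat{f}'(0)$; and $\hat{g}''(t) = 1 + \innerprod{\gamma(t)-x}{\acc(t)}$. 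Feeding the $\hat{f}$-identity into the $\hat{g}$-inequality eliminates $\hat{f}'(0)$ and leaves $1 - \lambda\hat{f}''(s_1) + \innerprod{\gamma(s_2)-x}{\acc(s_2)} \leq 0$. I would then bound $\abs{\hat{f}''(s_1)}\leq\Lambda$, $\norm{\gamma(s_2)-x}\leq r$ (as $\gamma\subset\ballc{r}{x}$), $\norm{\acc(s_2)} = \norm{\twoff{\vel(s_2)}{\vel(s_2)}}\leq 1/\reach_M$ by \Cref{lem:twoformbound}, and $\abs{\lambda}\leq r/\mu$ (from $n\perp\nabla_p f$ and $\norm{\nabla_p f}=\mu$), arriving at $1\leq r\left(\Lambda/\mu + 1/\reach_M\right) = r/\rho$, i.e.\ $r\geq\rho$, contradicting $r<\rho$. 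Hence $\projmap{\level{f}{0}}{x}=p$.

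Parts (iii) and (iv) then fall out. For (iii): the inclusion ``$\supseteq$'' is exactly (ii), while ``$\subseteq$'' follows by applying (i) to any $x\in\UP{\level{f}{0},p}$, for which $p$ is the unique nearest neighbour; and since every $x\in\med{\level{f}{0},p}$ has $p$ among its nearest neighbours --- hence is of the affine form by (i) --- but cannot lie in $\ball{\rho}{p}$ (else (ii) would give $\projmap{\level{f}{0}}{x}=p$, i.e.\ $x\in\UP{\level{f}{0}}$, contradicting $x\in\med{\level{f}{0}}$), we obtain $d(p,\med{\level{f}{0},p})\geq\rho$, i.e.\ $\fakelfs{\level{f}{0}}{p}\geq\rho$ (the case $\med{\level{f}{0},p}=\emptyset$ being trivial). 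For (iv): since $r<\reach_M$, \Cref{thm:geomdistortion} makes $\ball{r}{x}\cap M$ geodesically connected, so if $f$ assumed values of both signs on it, the geodesic joining a positive point to a negative one would remain in $\ball{r}{x}\cap M$ and, by continuity, pass through a zero of $f$ --- impossible given $\ball{r}{x}\cap\level{f}{0}=\emptyset$. Hence $f$ is sign-definite on $\ball{r}{x}\cap M$.

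The hard part will be Part (ii): it is the step that pins down the exact constant $1/\rho = \Lambda/\mu + 1/\reach_M$, and it calls for care to (a) work with a \emph{geodesic of $M$}, so that $\acc$, being normal to $M$, does not interfere with the tangential vector $\nabla f$; (b) keep $\gamma$ inside a ball on which both the Hessian of $f$ and the second fundamental form of $M$ are uniformly bounded; and (c) verify $\rho\leq\reach_M$, so that the geodesic-convexity input of \Cref{thm:geomdistortion} is legitimately available at radius $r<\rho$.
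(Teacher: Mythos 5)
Your proposal is correct and follows essentially the same route as the paper: the decomposition in (i) via criticality of the restricted squared-distance function, the Taylor-expansion-along-a-geodesic contradiction in (ii) yielding the constant $1/\rho = \Lambda/\mu + 1/\reach_M$, and the same deductions for (iii) and (iv). Your added care in (ii) --- checking $\rho \leq \reach_M$ before invoking \Cref{thm:geomdistortion}, and noting that the geodesic acceleration is normal to $M$ so it does not contaminate $\hat{f}''$ --- makes explicit two steps the paper leaves implicit.
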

	The above observations about sublevel and level sets give us a stepping stone towards deriving the bound for the reach of regular intersections of multiple sublevel and level sets. 
	
	\begin{proposition} \label{lem:fmulti} Consider a $(\mu, \Lambda)$-regular intersection as in \Cref{def:regcap}:
		\begin{equation}
		E = \bigcap_{i=1}^\ell \level{f}{I_i} \tag{\Cref{eq:regcap}}
		\end{equation}
		For $0 \leq k \leq \ell$, define
		\begin{equation}
		\frac{1}{\rho_k} = \frac{1}{\reach_M} + \sqrt{k}\frac{\Lambda}{\mu}. \label{eq:rhok}
		\end{equation}
		Consider $x \in \R^d$ and $p \in E$. Without loss of generality, assume $f_i(p) = 0$ for $i \leq k$, and $f_i(p) <0$ otherwise. Then:
		\begin{enumerate}[ref=\thelemma (\roman*), label={\textup{(\roman*)}}]
			\item \label{lem:fmulti1}If $p \in \Nneighs{E}{x}$, then there are coefficients $\lambda_i$ and $n \in N_pM$, such that
			\begin{equation}
			x-p = n + \sum_{i = 1}^k \lambda_i\nabla_p f_i,   \label{eq:xpfmulti}
			\end{equation}
			where $\lambda_i \geq 0$ if $I_i = \lezero$.
			\item \label{lem:fmulti2}Conversely, if $x-p$ can be written in the form \cref{eq:xpfmulti}, and if
			$\norm{x-p} < \rho_k$, then we have $\projmap{E}{x} = p$.
			\item \label{lem:fmulti3}Consequently,
			\begin{equation}
			\UP{E,p} \cap \ball{\rho_k}{p} = \sett{x \in \ball{\rho_k}{p}}{x-p\  \text{satisfies \cref{eq:xpfmulti}}}.
			\end{equation}
			and ${\fakelfs{E}{p}} \geq \rho$.
		\end{enumerate}

		\begin{proof} Let $r = \norm{x-p}$ and consider the open Euclidean ball $\ball{r}{x}$.
			\begin{enumerate}[label={\textup{(\roman*)}}]
				\item If $\ball{r}{x} \cap M = \emptyset$, then $p \in \Nneighs{M}{x}$ and $x-p \in N_pM$. Else, consider the case $\ball{r}{x} \cap M \neq \emptyset$ and write $x-p = n + t+ \sum_{i = 1}^k \lambda_i\nabla_p f_i $, where $t \in T_pM$ is orthogonal to $\nabla_p f_1 , \ldots, \nabla_p f_k $.
				
				Let $\gamma: [0,1] \to M$ be a smooth curve wholly contained in $E$, with $\gamma(0) = p$ and $\dot{\gamma}(0) = \nu$. As $f_i(p) = 0$ for $i \leq k$, for $I_i = \lezero$
				\begin{align*}
				\exists \epsilon > 0 \qq{s.t.} f_i(\gamma(t)) < 0\quad  \forall t \in (0, \epsilon)  &\iff \innerprod{\nu}{\nabla_p f_i } < 0
				\end{align*}
				and for all $i$,
				\begin{align*}
				f_i(\gamma(t)) = 0  &\iff \innerprod{\dot{\gamma}(t)}{\nabla_{\gamma(t)} f_i} = 0
				\end{align*}
				As $p \in \Nneighs{E}{x}$, the ball $\ball{r}{x}$ cannot intersect any point in $E$; as such, we also have
				\begin{equation*}
				\innerprod{\nu}{x-p} \leq 0 \implies \innerprod{\nu}{t} + \sum_{i=1}^k \lambda_i \innerprod{\nu}{\nabla_p f_i } \leq 0 \tag{$\ast$}
				\end{equation*}
				where we substituted our expression for $x-p$ and noted that $n \in N_pM$ and $\nu \in T_pM$. Choosing $\gamma$ so that $f_i(\gamma(t)) = 0$ for $i \leq k$ and $\nu = t$, we substitute into ($\ast$) and deduce
				\begin{equation*}
				\innerprod{t}{t} \leq 0 \implies t = 0.
				\end{equation*}
				Then, as  $\nabla_p f_1 , \ldots, \nabla_p f_k $ are linearly independent, we can choose $\nu$ such that for one $j \leq k$ where $I_j = \lezero$, we have  $\innerprod{\nu}{\nabla_p f_j}< 0$ and $\innerprod{\nu}{\nabla_p f_i }= 0$ for $i \neq j$. Substituting this choice of $\nu$ into ($\ast$),
				\begin{equation*}
				\lambda_j \innerprod{\nu}{\nabla_p f_j} \leq 0 \qand  \innerprod{\nu}{\nabla_p f_j}< 0 \implies \lambda_j \geq 0.
				\end{equation*}
				Thus $x-p = n + \sum_{i = 1}^k \lambda_i\nabla_p f_i $ where $\lambda_i \geq 0$ if $I_i = \lezero$.
				\item First, let us consider the case where $\lambda_i = 0$. Then $x-p \in N_pM$. As  $r = \norm{x-p} < \lfs{M}{p}$, we can apply \Cref{thm:manifoldtube} and deduce that $\projmap{M}{x} = p$. As $E \subset M$, therefore $\projmap{E}{x} = p$.
				
				Then let us consider the case where $\lambda_i \neq 0$ for some $i$. Let $\tilde{f} = \sum_{i=1}^k \alpha_i f_i$ where $\alpha_i = \lambda_i /\norm{\lambda}$, where where $\norm{\lambda} = \sqrt{\sum_{i=1}^k \lambda_i^2}$.
				
				Since $f_i(q) \leq 0$ for $q \in E$ we note that $E\subset \sublevel{\tilde{f}}{0}$. In particular, $p \in \level{\tilde{f}}{0}$. Since $E$ is a regular intersection, we can check that $\norm{\nabla \tilde{f}} \geq \mu  > 0$ due to \Cref{def:regcap1}. Furthermore, due to \Cref{def:regcap2},  for any $q\in M$ and unit vector $X \in T_q M$,
				\begin{align*}
				\abs{\innerprod{X}{\nabla_X \nabla \tilde{f}}} &= \abs{ \sum_{i=1}^k \alpha_i  \innerprod{X}{\nabla_X \nabla f_i} }\\
				&\leq \Lambda \sup_{\norm{\alpha} = 1} \sum_{i=1}^k \alpha_i   = \sqrt{k}\Lambda.
				\end{align*}
				Thus $\tilde{\Lambda} = \sqrt{k}\Lambda$ is an upper bound on the Hessian of $\tilde{f}$. Because $(x-p) \propto (n + \nabla \tilde{f}(p))$ and
				\begin{equation*}
				r = \norm{x-p} < \rho_k = \qty(\frac{1}{\reach_M} + \frac{\tilde{\Lambda}}{\mu})^{-1},
				\end{equation*}
				we can apply \Cref{lem:fsingle} to $p$ in $\level{\tilde{f}}{0}$ and deduce that $\projmap{\level{\tilde{f}}{0}}{x} = p$.
				
				Consider $\ballc{r}{x} \cap M$. Since $\projmap{\level{\tilde{f}}{0}}{x} = p$, we have  $\ballc{r}{x} \cap \level{\tilde{f}}{0} = p$ and $\ball{r}{x} \cap \level{\tilde{f}}{0} = \emptyset$. Furthermore, since $r < \reach_M$, the sign of $\tilde{f}$ is either non-negative or non-positive on $\ballc{r}{x} \cap M$ according to \Cref{lem:fsingle4}. We now determine the sign of $\tilde{f}$ on $\ballc{r}{x} \cap M$. Since there is some $i$ for which  $\lambda_i \neq 0$, let us consider a smooth curve $\sigma(t)$ on $M$ where $\sigma(0) = p$ and $\dot{\sigma}(t) = \nabla_{\sigma(t)} \tilde{f}$. Since $\innerprod{\dot{\sigma}(0)}{x-p} \propto \norm{\nabla_p \tilde{f}}^2 > 0$,  the curve enters $\ballc{r}{x}$ at $p$. As $\tilde{f}$ increases away from $0$ along the curve and for sufficiently small $\epsilon > 0$, $\sigma(\epsilon) \in \ball{r}{x}\cap M$, we deduce that $\tilde{f} > 0$ on $\ball{r}{x}\cap M$ as we have shown that $\tilde{f}$ is of constant sign on $\ball{r}{x}\cap M$. In other words, $\sublevel{\tilde{f}}{0} \cap \ball{r}{x} = \emptyset$. As we have shown that $\ballc{r}{x} \cap \level{\tilde{f}}{0} = p$, the continuity of $\tilde{f}$ implies $\sublevel{\tilde{f}}{0} \cap \ballc{r}{x} = p$.
				
				Because $E \subset \sublevel{\tilde{f}}{0}$ and $\sublevel{\tilde{f}}{0} \cap \ballc{r}{x} = p$, we must therefore have $E \cap \ballc{r}{x} = p$ i.e. $\projmap{E}{x} = p$.
				\item Follows the reasoning of the proof of \Cref{lem:fsingle3}.
			\end{enumerate}
		\end{proof}
	\end{proposition}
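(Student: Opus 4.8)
The plan is to read part~(i) as the first-order (Karush--Kuhn--Tucker-type) necessary condition for $p$ to be a nearest point of $E$ to $x$, to read part~(ii) as the matching sufficient condition obtained by collapsing the constraints active at $p$ into a single smooth function $\tilde f$ and invoking \Cref{lem:fsingle}, and to deduce part~(iii) formally from (i) and (ii) together with the definition of $\fakereach$ in \cref{eq:faketau}.

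For part~(i), first dispatch the degenerate case $\ball{r}{x}\cap M=\emptyset$ with $r=\norm{x-p}$: there $p$ is already a nearest point of $M$ to $x$, so $x-p\in N_pM$ and \cref{eq:xpfmulti} holds with every $\lambda_i=0$. Otherwise write $x-p=n+t+\sum_{i=1}^{k}\lambda_i\nabla_p f_i$ with $n\in N_pM$ and $t\in T_pM$ orthogonal to each $\nabla_p f_i$; this is legitimate because $\nabla_p f_1,\dots,\nabla_p f_k$ are linearly independent by \Cref{def:regcap1}. The engine of the argument is that, since $p\in\Nneighs{E}{x}$, every smooth curve $\gamma$ in $E$ with $\gamma(0)=p$ satisfies $\innerprod{x-p}{\dot\gamma(0)}\le 0$, this being the sign of the derivative at $s=0$ of $s\mapsto\norm{x-\gamma(s)}^2$. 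Using \Cref{def:regcap1} as a constraint qualification so that the relevant tangent directions are realised by curves in $E$, I would first take a curve in $E$ along which every $f_i$ vanishing at $p$ stays identically $0$ and with $\dot\gamma(0)=\pm t$; testing both signs forces $\norm{t}^2=\innerprod{x-p}{t}=0$, hence $t=0$. Then, for each $j\le k$ with $I_j=\lezero$, I would follow the negative gradient of the restriction of $f_j$ to the submanifold $\bigcap_{i\le k,\,i\ne j}\level{f_i}{0}$; the resulting curve stays in $E$ for small positive time, and its initial velocity $\nu$ is orthogonal to every $\nabla_p f_i$ with $i\ne j$ while $\innerprod{\nu}{\nabla_p f_j}<0$, so $\innerprod{x-p}{\nu}\le 0$ collapses (using $t=0$) to $\lambda_j\innerprod{\nabla_p f_j}{\nu}\le 0$ and hence $\lambda_j\ge 0$. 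This establishes \cref{eq:xpfmulti}.

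For part~(ii), if all $\lambda_i=0$ then $x-p\in N_pM$ and $r<\rho_k\le\reach_M\le\lfs{M}{p}$, so \Cref{thm:manifoldtube} gives $\projmap{M}{x}=p$, hence $\projmap{E}{x}=p$ since $E\subset M$. Otherwise set $\tilde f=\sum_{i=1}^{k}\alpha_i f_i$ with $\alpha_i=\lambda_i/\norm{\lambda}$, and verify the hypotheses of \Cref{lem:fsingle} for $\tilde f$ at $p$: that $p\in\level{\tilde f}{0}$ and $E\subset\sublevel{\tilde f}{0}$, because $\alpha_i\ge 0$ whenever $I_i=\lezero$ while $f_i$ vanishes on $E$ whenever $I_i=\set{0}$; that $\norm{\nabla_p\tilde f}\ge\mu$, by writing $\nabla_p\tilde f=\sum_i\alpha_i\nabla_p f_i$ as the adjoint of the Jacobian of $(f_1,\dots,f_k)$ at $p$ applied to the unit vector $\alpha$ and using the singular-value lower bound in \Cref{def:regcap1}; and that $\norm{H_q\tilde f}\le\sqrt{k}\,\Lambda$ for every $q\in M$, by the triangle inequality together with $\sum_i\abs{\alpha_i}\le\sqrt{k}\norm{\alpha}=\sqrt{k}$ and \Cref{def:regcap2}. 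Since $x-p=n+\norm{\lambda}\,\nabla_p\tilde f$ and $r=\norm{x-p}<\rho_k=\bigl(1/\reach_M+\sqrt{k}\,\Lambda/\mu\bigr)^{-1}$, which is no larger than the threshold that \Cref{lem:fsingle2} produces for $\tilde f$ (a larger gradient norm only enlarges that threshold), \Cref{lem:fsingle2} gives $\projmap{\level{\tilde f}{0}}{x}=p$, so $\ball{r}{x}\cap\level{\tilde f}{0}=\emptyset$ and $\ballc{r}{x}\cap\level{\tilde f}{0}=\set{p}$. Now \Cref{lem:fsingle4} shows $\tilde f$ has constant sign on $\ball{r}{x}\cap M$; the gradient flow of $\tilde f$ from $p$ enters $\ball{r}{x}$, since $\innerprod{x-p}{\nabla_p\tilde f}=\norm{\lambda}\norm{\nabla_p\tilde f}^2>0$, while strictly increasing $\tilde f$, so that constant sign is positive. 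A continuity argument anchored at $\ballc{r}{x}\cap\level{\tilde f}{0}=\set{p}$ then shrinks $\sublevel{\tilde f}{0}\cap\ballc{r}{x}$ to $\set{p}$; since $p\in E\subset\sublevel{\tilde f}{0}$, this forces $E\cap\ballc{r}{x}=\set{p}$, i.e.\ $\projmap{E}{x}=p$.

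For part~(iii), the set identity $\UP{E,p}\cap\ball{\rho_k}{p}=\sett{x\in\ball{\rho_k}{p}}{x-p\ \text{satisfies \cref{eq:xpfmulti}}}$ is exactly (i) and (ii) read in tandem, just as in the proof of \Cref{lem:fsingle3}; and $\fakelfs{E}{p}\ge\rho_k$ follows because any $x\in\med{E,p}$ has $p\in\Nneighs{E}{x}$, so (i) puts $x-p$ into the form of \cref{eq:xpfmulti}, yet $x\notin\UP{E,p}$, so (ii) is contradicted unless $\norm{x-p}\ge\rho_k$. I expect the main obstacle to be the reduction carried out in~(ii): extracting the gradient lower bound $\norm{\nabla_p\tilde f}\ge\mu$ cleanly from the singular-value hypothesis, and---more delicately---upgrading ``$p$ is the unique nearest point of the hypersurface $\level{\tilde f}{0}$'' to ``$p$ is the unique nearest point of $E$'', which is exactly where the sign analysis is needed to cut $\sublevel{\tilde f}{0}\cap\ballc{r}{x}$ down to the single point $p$.
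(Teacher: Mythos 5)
Your proposal is correct and follows essentially the same route as the paper: the same KKT-style first-order argument with test curves in $E$ for part (i), the same reduction to the single function $\tilde f=\sum_i(\lambda_i/\norm{\lambda})f_i$ with the $\mu$ and $\sqrt{k}\Lambda$ bounds and the sign analysis via \Cref{lem:fsingle4} for part (ii), and the same formal combination for part (iii). Your justification of $E\subset\sublevel{\tilde f}{0}$ (separating the cases $I_i=\set{0}$ and $I_i=\lezero$ and using $\alpha_i\ge 0$ in the latter) is in fact slightly more careful than the paper's one-line remark.
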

	
	We have now arrived at the proof of \Cref{lem:regreachbound}.
	
	\begin{proof}[Proof of \Cref{lem:regreachbound}]
		As we have produced a bound on $\fakelfs{E}{p}$ for any $p \in E$, \Cref{lem:fsingle3}, this follows from applying \Cref{lem:faketau0ii} that $\reach_E = \inf_{p \in E} \fakelfs{E}{p} \geq \rho_k$ where $\rho_k$ is as defined in \cref{eq:rhok}.
	\end{proof}
	
	In the next subsection, we establish theoretical guarantees for recovering the homology of regular intersections via the more general homology inference results for subsets with positive reach. 
	
	\subsection{Homological Inference of Subsets with Positive Reach} \label{subsec:hominfposreach}
	
	It was shown in \cite{niyogi_finding_2008,wang_topological_2020} showed that a sufficiently dense point sample of compact manifolds with and without boundary can be used to recover their homotopy type. These arguments are readily generalised to the following inference result for Euclidean subsets with positive reach.
	
	\begin{restatable}{proposition}{nonoise}
		\label{prop:nonoise}
		Let $A$ be a subset of $\R^d$ with positive reach $\reach_A$. Suppose we have a finite point sample $\X \subset A$ that is $\delta$-dense in $A$ where $\delta < \frac{\bar{\reach}}{4}$, where $\bar{\reach} \in (0, \reach_A]$ is some positive lower bound on the reach of $A$. Then $\X^\epsilon$ deformation retracts to $A$ if
		\begin{equation}
		\frac{\epsilon}{\bar{\reach}} \in \qty(\frac{1}{2}- \sqrt{\frac{1}{4}-\frac{\delta}{\bar{\reach}}}, \frac{1}{2} + \sqrt{\frac{1}{4}-\frac{\delta}{\bar{\reach}}}).
		\end{equation}
	\end{restatable}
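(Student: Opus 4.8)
The plan is to realize the retraction explicitly via the nearest‑point projection $\xi_A$: set $F\colon \X^\epsilon \times [0,1] \to \R^d$, $F(x,t) = (1-t)\,x + t\,\xi_A(x)$. Four things then need checking: that $\xi_A$ is defined and continuous on all of $\X^\epsilon$ (so $F$ is continuous); that $A \subseteq \X^\epsilon$; that $F$ restricts to the identity on $A$, with $F(\cdot,0)=\mathrm{id}$ and $F(\cdot,1)=\xi_A$ landing in $A$; and — the only substantial point — that $F$ actually takes values in $\X^\epsilon$, which reduces to showing $[x,\xi_A(x)] \subseteq \X^\epsilon$ for every $x \in \X^\epsilon$.

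First I would record how the hypotheses translate. Since $\delta < \tfrac14\bar{\reach}$, the displayed interval is nonempty, and every $\epsilon$ in it satisfies $\epsilon < \bar{\reach} \le \reach_A$ and $\delta < \epsilon$; indeed the interval condition is equivalent to $\epsilon^2 - \bar{\reach}\,\epsilon + \delta\,\bar{\reach} < 0$, i.e.\ to $\delta < \epsilon\bigl(1 - \tfrac{\epsilon}{\bar{\reach}}\bigr)$ — the form I will use below. From $\epsilon < \reach_A$ and \Cref{thm:fedtube} we get $\X^\epsilon \subseteq A^\epsilon \subseteq \UP{A}$, so $\xi_A$ is defined on $\X^\epsilon$ and continuous there (continuity of $\xi_A$ on the open reach‑tube is standard, see \cite{federer_curvature_1959}); and from $\delta < \epsilon$ and $\delta$‑density, every point of $A$ lies within $\epsilon$ of a sample point, so $A \subseteq \X^\epsilon$. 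Since $\xi_A$ restricts to the identity on $A$, so does $F$, and the endpoint conditions are immediate.

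The heart of the argument is $[x,\xi_A(x)] \subseteq \X^\epsilon$. Fix $x \in \X^\epsilon$; we may assume $x \notin A$, else the segment degenerates. Pick $x_0 \in \X$ with $\norm{x-x_0} < \epsilon$, set $p = \xi_A(x)$ and $\eta = \norm{x-p} = d_A(x) \in (0,\epsilon)$. Because every ball $\ball{\epsilon}{y}$ with $y \in \X$ is convex and contained in $\X^\epsilon$, it suffices to produce a single sample point $y$ with $x, p \in \ball{\epsilon}{y}$. If $\norm{p-x_0} < \epsilon$, then $y = x_0$ works. Otherwise $\norm{p-x_0} \ge \epsilon$, and here the geometry of $A$ at $p$ is used: since $x \in \UP{A,p}$, \Cref{lem:angletonormal} applied to the point $x_0 \in A$, together with $\fakelfs{A}{p} \ge \lfs{A}{p} \ge \reach_A \ge \bar{\reach}$, gives $\innerprod{x_0-p}{x-p} < \tfrac{\norm{x_0-p}^2\,\eta}{2\bar{\reach}}$; feeding this into the law of cosines in the triangle $x, x_0, p$ yields
\[
\epsilon^2 > \norm{x-x_0}^2 = \eta^2 + \norm{x_0-p}^2 - 2\innerprod{x-p}{x_0-p} > \eta^2 + \norm{x_0-p}^2\Bigl(1-\tfrac{\eta}{\bar{\reach}}\Bigr) \ge \eta^2 + \epsilon^2\Bigl(1-\tfrac{\eta}{\bar{\reach}}\Bigr),
\]
which rearranges (dividing $\eta^2 < \epsilon^2\eta/\bar{\reach}$ by $\eta>0$) to $\eta < \epsilon^2/\bar{\reach}$. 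Now choose $q \in \X$ with $\norm{p-q} \le \delta$, possible since $p \in A$ and $\X$ is $\delta$‑dense. Then $\norm{p-q} \le \delta < \epsilon$ and $\norm{x-q} \le \norm{x-p} + \norm{p-q} \le \eta + \delta < \tfrac{\epsilon^2}{\bar{\reach}} + \epsilon\bigl(1-\tfrac{\epsilon}{\bar{\reach}}\bigr) = \epsilon$ by the rewritten hypothesis, so $x, p \in \ball{\epsilon}{q}$ and $y = q$ works. This completes the verification that $F$ is a deformation retraction of $\X^\epsilon$ onto $A$.

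I expect the only real obstacle to be the second case above. Purely metric triangle‑inequality bounds cannot exclude the bad configuration in which $x$ sits far along a generalized normal direction at $p=\xi_A(x)$ while its nearby sample point $x_0$ has been dragged past $p$ to the far side; it is exactly the angle bound of \Cref{lem:angletonormal} — quantifying that $A$ curves away from its normal cones at rate at most $1/\bar{\reach}$ — that rules this out, and the bookkeeping lands on the stated threshold precisely because the endpoints of the interval are the roots of $\epsilon^2 - \bar{\reach}\,\epsilon + \delta\,\bar{\reach}$. Everything else is routine, the one mild loose end being to confirm continuity of $\xi_A$ on $\X^\epsilon$ rather than on the a priori smaller open tube $A^\epsilon$, which follows once $\X^\epsilon \subseteq A^\epsilon$ is known.
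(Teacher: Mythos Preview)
Your proof is correct and follows essentially the same route as the paper: both arguments realise the retraction via the nearest-point projection $\xi_A$, reduce the problem to showing $[x,\xi_A(x)]\subset\X^\epsilon$, and establish the key $\epsilon^2/\bar\reach$ bound by combining \Cref{lem:angletonormal} with the law of cosines. The one (mild) difference is organisational: in the bad case the paper introduces an intermediate point $y$ on $\partial\ball{\epsilon}{x_0}\cap\overline{x\,\xi_A(x)}$, bounds $\norm{y-\xi_A(x)}\le\epsilon^2/\reach_A$ via a separate lemma, and then covers the segment with \emph{two} $\epsilon$-balls; you instead bound $\eta=\norm{x-\xi_A(x)}<\epsilon^2/\bar\reach$ directly and cover the whole segment with a \emph{single} ball centred at a sample point near $\xi_A(x)$, which is slightly cleaner.
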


	Given the explicit lower bound for the reach of regular intersections in \Cref{lem:regreachbound}, the following homology inference result for regular intersections is a direct corollary of \Cref{prop:nonoise}.
	
	\begin{corollary}\label{cor:regintdense}
		Suppose we have a finite point sample $\X \subset E$ is $\delta$-dense in a regular intersection $E = \bigcap_{i=1}^l \sublevel{f}{I_i}$ as given in \cref{eq:regcap}. Let $k$ be the maximum number of level sets $\level{f_i}{0}$ that intersect in $E$. If $\delta < \frac{\rho_k}{4}$, where $\rho_k$ is given by \cref{eq:rhok}, then $\X^\epsilon$ deformation retracts to $A$ if
		\begin{equation}
		\frac{\epsilon}{\rho_k} \in \qty(\frac{1}{2}- \sqrt{\frac{1}{4}-\frac{\delta}{\rho_k}}, \frac{1}{2} + \sqrt{\frac{1}{4}-\frac{\delta}{\rho_k}}).
		\end{equation}
	\end{corollary}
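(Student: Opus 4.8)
The plan is to obtain \Cref{cor:regintdense} by feeding the reach lower bound of \Cref{lem:regreachbound} into the abstract sampling result \Cref{prop:nonoise}; there is essentially no independent content beyond this combination.

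First I would observe that $E$, being a finite intersection of (sub)level sets of continuous functions on the compact manifold $M$, is a compact --- hence closed --- subset of $\R^d$, so it is of the type to which \Cref{prop:nonoise} applies. Next, \Cref{lem:regreachbound} guarantees that $E$ has positive reach with $\reach_E \geq \rho_k$, where $\rho_k$ is given by \cref{eq:rhok} and $k$ may be taken to be any integer bounding from above the number of the $f_i$ that vanish simultaneously at a common point of $E$. The corollary takes $k$ to be the maximum such number, i.e.\ the largest $j$ for which some $p \in E$ lies on $j$ of the hypersurfaces $\level{f_i}{0}$; since $1/\rho_k = 1/\reach_M + \sqrt{k}\,\Lambda/\mu$ is increasing in $k$, this is the smallest admissible value and therefore yields the largest (best) available lower bound $\rho_k \in (0, \reach_E]$.

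With $\bar{\reach} := \rho_k$ chosen, the hypotheses of \Cref{prop:nonoise} hold verbatim: $\X \subset E$ is a finite $\delta$-dense sample with $\delta < \rho_k/4 = \bar{\reach}/4$. The proposition then states that $\X^\epsilon$ deformation retracts onto $E$ exactly when $\epsilon/\bar{\reach} = \epsilon/\rho_k$ lies in $\left(\tfrac{1}{2} - \sqrt{\tfrac{1}{4} - \tfrac{\delta}{\rho_k}},\, \tfrac{1}{2} + \sqrt{\tfrac{1}{4} - \tfrac{\delta}{\rho_k}}\right)$, which is the claimed conclusion. The only point requiring care is matching the combinatorial parameter $k$ across the two statements; past that, the argument is a direct substitution, so I do not anticipate any real obstacle. (If desired, one can append the homological consequence: a deformation retraction is in particular a homotopy equivalence, so $\HG_\bullet(\X^\epsilon) \cong \HG_\bullet(E)$ for each such $\epsilon$.)
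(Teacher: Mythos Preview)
Your proposal is correct and follows exactly the approach the paper indicates: the corollary is obtained by plugging the reach lower bound $\reach_E \geq \rho_k$ from \Cref{lem:regreachbound} into \Cref{prop:nonoise} with $\bar{\reach} = \rho_k$. The paper itself offers no separate proof beyond the sentence ``the following \ldots\ is a direct corollary of \Cref{prop:nonoise}'', so your write-up simply makes that substitution explicit.
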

	We devote the remainder of this subsection to deriving \Cref{prop:nonoise}, closely following the original argument for manifolds in \cite[Proposition 4.1]{niyogi_finding_2008}.
	
	\begin{lemma}\label{lem:defretr} Let $A$ be a subset of $\R^d$ with positive reach. Suppose for some $B \subset \R^d$, we have $A \subset B \subset \UP{A}$. If $B$ is star-shaped relative to $A$ -- i.e. if for every $p \in B$, the line segment $\overline{p\xi_A\qty(p)}$ is also contained in $B$ -- then $B$ deformation retracts to $A$.
	\end{lemma}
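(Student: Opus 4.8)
The plan is to exhibit the straight-line homotopy onto the nearest-point projection as an explicit (strong) deformation retraction. Concretely, I would define $H : B \times [0,1] \to \R^d$ by
\[
H(p,t) = (1-t)\,p + t\,\projmap{A}{p},
\]
which makes sense because $B \subseteq \UP{A}$, so $\projmap{A}{p}$ is defined for every $p \in B$. The first point to check is that $H$ actually takes values in $B$: for fixed $p$, the path $t \mapsto H(p,t)$ traces out precisely the segment $\overline{p\,\projmap{A}{p}}$, which lies in $B$ by the star-shapedness hypothesis. Hence $H$ restricts to a map $B \times [0,1] \to B$.

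Next I would verify the retraction identities. We have $H(p,0) = p$, so $H(\cdot,0) = \mathrm{id}_B$; and $H(p,1) = \projmap{A}{p} \in A$, so $H(\cdot,1)$ maps $B$ into $A$. Finally, if $p \in A$ then $\projmap{A}{p} = p$, so $H(p,t) = p$ for all $t$; thus $A$ is fixed pointwise throughout. Together these say that $H$ is a strong deformation retraction of $B$ onto $A$, provided $H$ is continuous.

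Joint continuity of $H$ in $(p,t)$ is immediate from the continuity of the affine operations once we know that $\projmap{A}{\cdot}$ is continuous on $\UP{A}$, so the crux — and the only genuinely non-formal step — is establishing this continuity. I would argue it directly: given $x_n \to x$ with all $x_n, x \in \UP{A}$, the sequence $\projmap{A}{x_n}$ is bounded, since $\norm{x_n - \projmap{A}{x_n}} = d_A(x_n) \to d_A(x)$; so by closedness of $A$ some subsequence converges to a point $y \in A$, and passing to the limit gives $\norm{x-y} = d_A(x)$, i.e. $y \in \Nneighs{A}{x}$, whence uniqueness forces $y = \projmap{A}{x}$. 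Since every subsequence of $\projmap{A}{x_n}$ has a further subsequence converging to $\projmap{A}{x}$, the whole sequence converges, which proves continuity. Note that this uses only that $A$ is closed and that nearest points are unique on $\UP{A}$ — positive reach is not needed for this step, though it is of course what guarantees that $\UP{A}$ is large enough to contain the neighbourhoods $B$ arising in our application. I expect this continuity fact to be the main (indeed the only) obstacle; the remaining verifications are pure bookkeeping.
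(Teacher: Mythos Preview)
Your proposal is correct and follows essentially the same approach as the paper: the straight-line homotopy $H(p,t)=(1-t)p+t\,\xi_A(p)$ is exactly the map the paper uses, with the same verifications of the retraction identities and the star-shapedness hypothesis guaranteeing the image lies in $B$. The only difference is that the paper dispatches continuity of $\xi_A$ by citing Federer \cite[Theorem 4.8(2)]{federer_curvature_1959}, whereas you supply a self-contained sequential argument; both are fine, and your version has the virtue of making explicit that only closedness of $A$ (implicit in positive reach) and uniqueness on $\UP{A}$ are needed.
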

	\begin{proof}
		Let us consider the function $F: [0,1] \times B \to \R^d$
		\begin{equation*}
		F(t,p) = (1-t)\cdot p + t\cdot \xi_A(p).
		\end{equation*}
		Since projection maps $\xi_A$ are continuous \cite[Theorem 4.8(2)]{federer_curvature_1959}, the map $F$ is continuous. Furthermore, if $\overline{p\xi_A(p)}$ is contained in $B$, then $F(t,p) \subset B$. We can easily check that $F(0,p)=p$,  $F(1,p) = \xi_A(p) \in A$, and $F(t,p) = \xi_A(p) = p$. Therefore, $F$ is a strong deformation retraction $F : [0,1] \times B \to B$ of $B$ onto $A$.
	\end{proof}

	\begin{lemma} \label{lem:nonoiseoffsetbound}  Assume the conditions of \Cref{prop:nonoise}. Consider $x \in \X$ and  $p \in \ball{\epsilon}{x} \cap \UP{A,q}$ where $\epsilon \in (0,\reach_A)$. If $q \notin \ball{\epsilon}{x}$, then there is a unique point $y = \partial \ball{\epsilon}{x} \cap \overline{qp} $, such that
		\begin{equation*}
		\norm{y - q} \leq \frac{\epsilon^2}{\reach_A}.
		\end{equation*}
	\end{lemma}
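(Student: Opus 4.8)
The plan is to pin down the point $y := \overline{qp} \cap \partial\ball{\epsilon}{x}$ and then squeeze $\norm{y-q}$ using \Cref{lem:angletonormal} at $q$ together with elementary planar geometry in the $2$-plane through $q$, $y$ and $x$. First I would dispose of the degenerate case $\norm{q-x}=\epsilon$ (then $y=q$ and the bound is trivial), and otherwise observe that since $p$ lies strictly inside $\ball{\epsilon}{x}$ while $q$ lies strictly outside, the quadratic $t\mapsto\norm{q+t(p-q)-x}^2$ on $[0,1]$ is $>\epsilon^2$ at $t=0$ and $<\epsilon^2$ at $t=1$, hence meets the level $\epsilon^2$ at exactly one interior parameter; this yields existence and uniqueness of $y$ and shows $y$ lies strictly between $q$ and $p$, so in particular $b:=\norm{y-q}<\norm{p-q}$.

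Next I would set up the hypotheses of \Cref{lem:angletonormal}. Because $q=\xi_A(p)$ and $y$ lies on $\overline{qp}$, a one-line triangle-inequality argument gives $q\in\Nneighs{A}{y}$ and $d(y,A)=\norm{y-q}\le\norm{p-q}\le\norm{x-p}<\epsilon<\reach_A$; Federer's tubular neighbourhood theorem (\Cref{thm:fedtube}) then forces $y\in\UP{A,q}$. Applying \Cref{lem:angletonormal} to $y\in\UP{A,q}$ with the arbitrary point of $A$ taken to be $x$, and bounding $\fakelfs{A}{q}\ge\lfs{A}{q}\ge\reach_A$ via \Cref{lem:faketau0i} and \Cref{def:reach}, I obtain
\[ \innerprod{x-q}{\normvec{y-q}} < \frac{\norm{x-q}^2}{2\reach_A}. \]

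Finally I would choose orthonormal coordinates with $q$ at the origin and first axis along $y-q$, so that $y=(b,0,\dots,0)$ and $p=(L,0,\dots,0)$ with $L=\norm{p-q}>b$; write $x=(x_1,\dots,x_d)$. Substituting the constraint $\norm{y-x}^2=\epsilon^2$ into the displayed inequality turns it into $x_1<(\epsilon^2-b^2)/(2(\reach_A-b))$, while subtracting $\norm{y-x}^2=\epsilon^2$ from $\norm{p-x}^2<\epsilon^2$ gives $x_1>(L+b)/2>b$. Chaining $b<x_1<(\epsilon^2-b^2)/(2(\reach_A-b))$ produces the quadratic inequality $b^2-2\reach_A b+\epsilon^2>0$; since $b<\epsilon<\reach_A$, the only admissible branch of its solution set is $b<\reach_A-\sqrt{\reach_A^2-\epsilon^2}$, and a direct check shows $\reach_A-\sqrt{\reach_A^2-\epsilon^2}\le\epsilon^2/\reach_A$, which is the claim.

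I expect the genuine obstacle to be this last step. Cruder estimates — using only the triangle inequality, or only the distances $\norm{x-q}$ and $\norm{x-p}$ — give the bound only up to a constant factor (one easily ends up with $2\epsilon^2/\reach_A$). Obtaining the sharp constant requires feeding into the algebra the precise facts that $y$ lies on the sphere $\partial\ball{\epsilon}{x}$ and strictly between $q$ and $p$ on the ray out of $q$, and then correctly selecting the branch of the quadratic $b^2-2\reach_A b+\epsilon^2>0$.
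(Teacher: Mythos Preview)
Your proof is correct and follows essentially the same route as the paper: locate $y$ on the segment $\overline{qp}$ by the intermediate value theorem, observe $y\in\UP{A,q}$, and then apply \Cref{lem:angletonormal} with the point $x\in A$. The paper's final algebra is a bit more direct --- it expands $\epsilon^2=\norm{y-x}^2=\norm{y-q}^2+\norm{x-q}^2-2\innerprod{y-q}{x-q}$, inserts the angle bound, and uses $\norm{x-q}\ge\epsilon$ to read off $\norm{y-q}\le\epsilon^2/\reach_A$ without introducing coordinates or solving a quadratic --- but the substance is identical.
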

	\begin{proof}

		Since $\epsilon < \reach_A$, and  $p \in \ball{\epsilon}{x}$, we know $p$ has a unique projection $\xi_A(p) = q$. By assumption $q \notin \ball{\epsilon}{x}$ and therefore $q \neq x$. As $q$ is the nearest neighbour of $p$ in $A$, we have  $r = \norm{p-q} < \norm{p-x}  < \epsilon < \reach_A$.
		
		Let $q(t) = (1-t)q + tp$ for $t \in [0,1]$ parametrise the line segment $\overline{qp}$. Since $q \in \ballc{tr}{q(t)} \cap A \subset \ballc{r}{q} \cap A  = q$ for all $t \in [0,1]$, the entire line segment $\overline{qp}$ lies in $\UP{A,q}$ Since .
		
		We consider the continuous function $c(t) = \norm{q(t) - x}$. Since  $g(0) > \epsilon$ and $g(1) < \epsilon$, by continuity there must be some $t_\ast \in (0,1)$ such that $c(t_\ast) = \norm{q(t_\ast) - x} = \epsilon$. Since $p$ is in the interior of  $\ball{\epsilon}{x}$ and $\ball{\epsilon}{x}$ is convex, $t_\ast$ must be unique. Thus for $y = q(t_\ast)$, we have $\overline{yp} \subset \ballc{\epsilon}{x}$.
		
		Since $\pi(y) = q \in A$ and $x \in A$, we can now apply \Cref{lem:angletonormal,lem:faketau0}, and deduce that
		\begin{equation*}
		\expval{\normvec{y-q},\normvec{x-q}} \leq \frac{\norm{x-q}}{2\reach_A}.
		\end{equation*}
		As such,
		\begin{align*}
		\epsilon^2 = \norm{y-x}^2 &= \norm{(y-q) + (x-q)}^2\\
		&=\norm{y-q}^2 + \norm{x-q}^2 - 2\expval{y-q, x-q}\\
		&\geq  \norm{y-q}^2 + \norm{x-q}^2 \qty(1- \frac{\norm{y-q}}{\reach_A})\\
		&\geq  \norm{y-q}^2 + \epsilon^2 \qty(1- \frac{\norm{y-q}}{\reach_A})\\
		\implies \norm{y-q} &\leq \frac{\epsilon^2}{\reach_A}.
		\end{align*}
		where in the fourth line we applied our assumption that $q \notin \ball{\epsilon}{x}$ and the fact that $\norm{y-q} = \norm{c(t_\ast) - q} < \reach_A$.
	\end{proof}
	
	Here is the promised proof of \Cref{prop:nonoise}.
	
	\begin{proof}[Proof of \Cref{prop:nonoise}]
		Consider any point $p \in \X^\epsilon$ and let $\xi_A(p)= q$. Since  $p \in \X^\epsilon$, the point $p$ is contained in at least one open Euclidean ball $\ball{\epsilon}{x}$ for some $x \in \X$. If $q \in \ball{\epsilon}{x}$ for one such $x$, then $\overline{pq}$ is also contained within $\ball{\epsilon}{x}$ as Euclidean balls are convex. Therefore, $\overline{pq}$ is contained in $\X^\epsilon$.
		
		Let us consider the case where $q$ is not contained in any of the Euclidean balls containing $p$. From \Cref{lem:nonoiseoffsetbound}, there exists some $y \in \overline{qp}$ such that $\overline{pq} \subset \ballc{\epsilon}{x}$, and $\norm{y-q} \leq \epsilon^2/\reach_A$. We can subdivide the line segment $\overline{pq}$ into two segments $\overline{qy}$ and $\overline{py}$, the latter being contained in $ \ball{\epsilon}{x}$. If there is some $x' \in \X$ such that the closed line segment $\overline{qy}$ is contained in $\ball{\epsilon}{x'}$, then the entirety of $\overline{pq}$ is contained in $\X^\epsilon$. This can be achieved if both $q$ and $y$ are contained in $\ball{\epsilon}{x'}$. By the $\delta$-density assumption, we can pick a point $x'\in \X$ such that $q \in \ball{\delta}{x'}$. If we assume $\delta < \epsilon$, then $q \in \ball{\epsilon}{x'}$.
		
		If $y$ is to be contained in $\ball{\epsilon}{x'}$, we require $\norm{x'-y} < \epsilon$. If we choose $\delta < \epsilon - \epsilon^2/\bar{\reach}$, then by the triangle inequality,
		\begin{align*}
		\norm{x'-y} &\leq \norm{x'-y} + \norm{q-y} \\
		&< \delta + \frac{\epsilon^2}{\reach_A}\\
		& < \epsilon - \frac{\epsilon^2}{\bar{\reach}}  +  \frac{\epsilon^2}{\reach_A}  \leq \epsilon.
		\end{align*}
		Therefore for choices of $\epsilon$ that satisfy $\delta < \epsilon - \epsilon^2/\bar{\reach}$, the line segment $\overline{pq}$ is contained in $\X^\epsilon$. Since this holds for any choice of $p$, by the \cref{lem:defretr}, this implies $\X^\epsilon$ deformation retracts to $A$.
		
		Finally, we have $\delta < \epsilon - \epsilon^2/\bar{\reach}$ if and only if
		\begin{equation*}
		\frac{\epsilon}{\bar{\reach}} \in \qty(\frac{1}{2}- \sqrt{\frac{1}{4}-\frac{\delta}{\bar{\reach}}}, \frac{1}{2} + \sqrt{\frac{1}{4}-\frac{\delta}{\bar{\reach}}}).
		\end{equation*}
	\end{proof}
	
	\section{Homological Inference of Index Pairs from Point Samples} 
	\label{sec:hominfidxpr}
	Having constructed an index pair $(\GM{N}, \GM{N}_-)$ of $S$ in \Cref{sec:constrindxpr}, we will show in this Section that the Conley index $\Con_\bullet(S) \cong \HG_\bullet(\GM{N}, \GM{N}_-)$ can be inferred from a sufficiently large finite point sample $\X \subset \GM{N}$ as described in Theorem (A) of the Introduction. We proceed by showing that $\GM{N}$ and $\GM{N}_-$ are both regular intersections, and thus have positive reach.

	\begin{lemma} \label{prop:regindexpair} If the Hessians of $f$ and $g$ are bounded, then $\GM{N}$ and $\GM{N}_-$ as constructed for $S \in \ccrit{f}$ as defined in \cref{eq:GMM} are regular intersections.
		\begin{proof}
			As the Hessians of $f$ and $g$ is bounded and $q$ has bounded second derivative, the Hessian of $h$ (\cref{eq:GMh}) is also bounded. Thus \cref{def:regcap2} is satisfied.
			
			Since $\crit{f} = \crit{h}$ (\Cref{H5}) and $S = \crit{f} \cap \GM{N} \subset \suplevelopen{h}{\gamma} \cap \sublevelopen{f}{\alpha}$ (\Cref{lem:IP1}),  $\nabla f$ and $\nabla h$ are non-zero on level sets $\level{f}{\alpha}$, $\level{h}{\beta}$, and $\level{h}{\gamma}$. Because we have assumed \Cref{G6}, the Jacobian of $(f,h): M \to \R^2$ is surjective on $\level{f}{\alpha} \cap \level{h}{\beta}$ and $\level{f}{\alpha} \cap \level{h}{\gamma}$ respectively. Since these are bounded and thus compact, the infimum of the second largest singular value of the Jacobian of $(f,h)$ on these sets is positive. We thus satisfy \Cref{def:regcap1}.
		\end{proof}
	\end{lemma}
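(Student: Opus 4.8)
The plan is to exhibit $\GM{N}$ and $\GM{N}_-$ explicitly as regular intersections in the sense of \Cref{def:regcap} and then check the two criteria \Cref{def:regcap1,def:regcap2} directly. Set $f_1 = f - \alpha$, $f_2 = \beta - h$ and $f_3 = h - \gamma$, all smooth on $M$ since $h\colon M\to\R$ is globally defined. As $\suplevel{h}{\beta} = \sublevel{f_2}{0}$ and $\intlevel{h}{\beta}{\gamma} = \suplevel{h}{\beta}\cap\sublevel{f_3}{0}$, the definition \cref{eq:GMM} becomes
\[
\GM{N} = \sublevel{f_1}{0}\cap\sublevel{f_2}{0}\qquad\text{and}\qquad \GM{N}_- = \GM{N}\cap\sublevel{f_3}{0},
\]
both finite intersections of sublevel sets. (If $s=r$ then $\beta=\gamma$ and $\intlevel{h}{\beta}{\gamma}$ degenerates to the single level set $\level{h}{\beta}$; this case is handled identically with one fewer function.) That $0$ is a regular value of each $f_i$ is equivalent to $\alpha$ being a regular value of $f$ and $\beta,\gamma$ being regular values of $h$, which is built into the choice of these endpoints in the construction. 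Finally, both $\GM{N}$ and $\GM{N}_-$ sit inside $\sublevel{g}{r}\subset\sublevel{g}{r_1}$ --- an inclusion established inside the proof of \Cref{lem:IP1} --- and are therefore compact.

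To verify \Cref{def:regcap2} it is enough to bound $\norm{H_ph}$, since $H_pf_1=H_pf$ and $H_pf_2=-H_pf_3=-H_ph$. Off $K$ we have $h=f$, so there $H_ph=H_pf$ is bounded by hypothesis; on $K$ the chain rule writes $H_ph$ in terms of $H_pf$, $H_pg$, $\nabla_pg$ and the first two derivatives of $q$. Since $q'$ and $q''$ are bounded (they vanish off $[r_0,r_1]$ and are continuous there), $\norm{\nabla g}$ is bounded on the compact set $K$, and $H_pf$, $H_pg$ are bounded by hypothesis, we obtain a finite $\Lambda$ with $\norm{H_pf_i}\leq\Lambda$ for all $i$ and all $p\in M$.

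To verify \Cref{def:regcap1} the key observation is that on $\level{f}{\alpha}$ one has $h=\alpha+q(g)$, so --- using $\beta-\alpha=q(r)$ and the fact that $q$ assumes the value $q(r)\in(0,q_0)$ only at $g=r$ ---
\[
\level{f}{\alpha}\cap\level{h}{\beta}=\level{f}{\alpha}\cap\set{g=r}=(f,g)^{-1}(\alpha,r),
\]
and likewise $\level{f}{\alpha}\cap\level{h}{\gamma}=(f,g)^{-1}(\alpha,s)$. These sets lie inside the open regularity interval $r_0<g<r_1$, where $q'(g)\neq 0$ (immediate from the formula for $q$) and $\nabla h=\nabla f+q'(g)\nabla g$; hence $\nabla f$ and $\nabla h$ are linearly independent there precisely when $\nabla f$ and $\nabla g$ are, which is exactly \Cref{G6}. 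For the single--function conditions, a point of $\level{f}{\alpha}\cap\GM{N}$ satisfies $f=\alpha>0=f(S)$, and a point of $\level{h}{\beta}\cap\GM{N}$ or $\level{h}{\gamma}\cap\GM{N}_-$ satisfies $h\in\{\beta,\gamma\}$ while $h\equiv q_0>\gamma\geq\beta$ on $S$ (computed in \Cref{lem:IP1}); as $\crit{h}=\crit{f}$ (\Cref{H5}), $\crit{f}\cap\sublevel{g}{r_1}=S$ (\Cref{G3}), and all these intersections lie in $\sublevel{g}{r_1}$, none of them meets $\crit{h}$, so $\nabla f$ and $\nabla h$ are nowhere zero on them. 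The remaining index subsets relevant for $\GM{N}_-$ --- those containing both $f_2$ and $f_3$ --- give empty intersections because $\beta<\gamma$. Each intersection above is closed in the compact set $\GM{N}$ (resp.\ $\GM{N}_-$) and on it the Jacobian of the corresponding tuple $(f_{i_1},\dots,f_{i_k})$ has rank $k$; its smallest nonzero singular value is then a positive continuous function attaining a positive minimum, and the least of these minima over the finitely many index subsets is the required $\mu>0$.

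I expect the only real subtlety --- rather than a genuine obstacle --- to be the bookkeeping in the previous paragraph: each $h$--level set must be intersected with $\level{f}{\alpha}$ \emph{before} invoking \Cref{G6}, because away from $\level{f}{\alpha}$ the set $\level{h}{\beta}$ can wander into the region $g\leq r_0$, where $\nabla h=\nabla f$ and $(f,h)$ fails to be a submersion. It is precisely the extra constraint $f=\alpha$ that forces $g=r$ (respectively $g=s$) to lie strictly inside $(r_0,r_1)$, which is what converts the hypothesis \Cref{G6} about $(f,g)$ into submersivity of $(f,h)$ exactly where it is needed.
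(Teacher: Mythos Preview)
Your proof is correct and follows the same overall strategy as the paper: verify \Cref{def:regcap2} via a Hessian bound on $h$, and verify \Cref{def:regcap1} by showing the relevant gradients are nonzero and the Jacobian of $(f,h)$ has full rank on the pairwise level-set intersections, then invoke compactness. Your version is in fact more explicit than the paper's in one respect: the paper simply cites \Cref{G6} for surjectivity of $\dd(f,h)$ on $\level{f}{\alpha}\cap\level{h}{\beta}$ and $\level{f}{\alpha}\cap\level{h}{\gamma}$, whereas you spell out the translation --- that on $\level{f}{\alpha}$ the equation $h=\beta$ forces $g=r$ (respectively $g=s$), where $q'(g)\neq 0$, so that linear independence of $\nabla f,\nabla h$ reduces to that of $\nabla f,\nabla g$ --- which is exactly the content the paper leaves implicit. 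Your careful enumeration of the index subsets (including the vacuous ones containing both $f_2$ and $f_3$) and your remark about the degenerate case $s=r$ are likewise absent from the paper but entirely in its spirit.
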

	
	Given this characterisation of $(\GM{N}, \GM{N}_-)$ in terms of regular intersections, the following homology inference guarantee follows from \Cref{cor:regintdense}.
	
	\begin{restatable}{proposition}{indexpairsample}
		\label{prop:indexpairsample}
		Let $\GM{N}$ and $\GM{N_-}$ be as constructed in \cref{eq:GMM}; these are $(\mu, \Lambda)$-regular intersections for parameters $\mu > 0$ and $\Lambda > 0$ by \Cref{prop:regindexpair}. Fix $\delta \in (0, \frac{\rho_2}{4})$, where
		\begin{equation*}
		\frac{1}{\rho_2} = \frac{1}{\reach_M} + \sqrt{2}\frac{\mu}{\Lambda},
		\end{equation*} Let $\X \subset \GM{N}$ be a finite point sample that is $\delta$-dense in $\GM{N}$ so that $\X_- := \X \cap \GM{N}_-$ is $\delta$-dense in $\GM{N_-}$. 
		Then $\HG_\bullet\qty(\GM{N}, \GM{N}_-) \cong \HG_\bullet(\X^\epsilon, \X_-^\epsilon)$ for $\epsilon$ in the open interval
		\begin{equation}
		\frac{\epsilon}{\rho_2} \in \qty(\frac{1}{2}- \sqrt{\frac{1}{4}-\frac{\delta}{\rho_2}}, \frac{1}{2} + \sqrt{\frac{1}{4}-\frac{\delta}{\rho_2}}). \label{eq:indexpairsampleeps}
		\end{equation}
	\end{restatable}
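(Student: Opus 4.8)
The plan is to treat $\GM{N}$ and $\GM{N}_-$ one at a time using the reach estimates of \Cref{sec:geometry}, and then to promote the two resulting absolute homology isomorphisms to an isomorphism of relative homology by applying the five lemma to the long exact sequences of the pairs $(\GM{N},\GM{N}_-)$ and $(\X^\epsilon,\X^\epsilon_-)$.

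First I would pin down a single reach bound good for both sets. By \Cref{prop:regindexpair} both $\GM{N}$ and $\GM{N}_-$ are regular intersections, and (taking a common $\mu$ and $\Lambda$, as in the statement) both are $(\mu,\Lambda)$-regular. From \cref{eq:GMM}, $\GM{N}$ is cut out by two defining inequalities and $\GM{N}_-$ by three; since $\beta<\gamma$, however, the level sets $\level{h}{\beta}$ and $\level{h}{\gamma}$ are disjoint, so at every point of either set at most two of the defining level sets pass through. Thus $k\le 2$ in \Cref{lem:regreachbound}, giving $\reach_{\GM{N}}\ge\rho_2$ and $\reach_{\GM{N}_-}\ge\rho_2$ with $\rho_2$ as in \cref{eq:rhok}. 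With the standing hypothesis $\delta<\rho_2/4$, \Cref{cor:regintdense} --- or directly \Cref{prop:nonoise} with the common lower bound $\bar\reach=\rho_2$ --- applied to $\X\subset\GM{N}$ and to $\X_-\subset\GM{N}_-$ produces, for every $\epsilon$ in the interval \cref{eq:indexpairsampleeps}, deformation retractions of $\X^\epsilon$ onto $\GM{N}$ and of $\X^\epsilon_-$ onto $\GM{N}_-$. These are strong (they fix the target pointwise, being the straight-line homotopies onto the nearest-point projection furnished by \Cref{lem:defretr}), so the inclusions $\GM{N}\hookrightarrow\X^\epsilon$ and $\GM{N}_-\hookrightarrow\X^\epsilon_-$ are homotopy equivalences, hence isomorphisms on $\HG_\bullet$.

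Next I would assemble the diagram. For $\epsilon$ in the interval \cref{eq:indexpairsampleeps} one has $\delta<\epsilon$ (as in the proof of \Cref{prop:nonoise}), so the $\delta$-density hypotheses give $\GM{N}\subseteq\X^\epsilon$ and $\GM{N}_-\subseteq\X^\epsilon_-$; together with $\GM{N}_-\subseteq\GM{N}$ and $\X^\epsilon_-\subseteq\X^\epsilon$ this is an inclusion of pairs $(\GM{N},\GM{N}_-)\hookrightarrow(\X^\epsilon,\X^\epsilon_-)$. Applying $\HG_\bullet$ and using naturality of the long exact sequence of a pair gives a commuting ladder whose vertical maps on the absolute terms are the isomorphisms just obtained; the five lemma then forces the vertical map on relative terms, $\HG_\bullet(\GM{N},\GM{N}_-)\to\HG_\bullet(\X^\epsilon,\X^\epsilon_-)$, to be an isomorphism, which is the claim.

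The homological bookkeeping here is routine; the point that needs care is why one cannot shortcut it. One is tempted to build a single deformation retraction of the pair $(\X^\epsilon,\X^\epsilon_-)$ onto $(\GM{N},\GM{N}_-)$, but the nearest-point projection onto $\GM{N}$ supplied by \Cref{lem:defretr} need not send $\X^\epsilon_-$ into $\GM{N}_-$: for a point of $\X^\epsilon_-$ near $\partial\GM{N}_-$, its closest point of $\GM{N}$ may well lie in $\GM{N}\setminus\GM{N}_-$. This forces the argument through the five lemma, and makes the genuinely substantive step the observation in the second paragraph that one reach bound $\rho_2$, and hence one interval of admissible $\epsilon$, works simultaneously for $\GM{N}$ and $\GM{N}_-$.
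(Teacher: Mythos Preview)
Your proposal is correct and follows essentially the same route as the paper: bound the reach of both $\GM{N}$ and $\GM{N}_-$ below by $\rho_2$ via \Cref{lem:regreachbound} (using that at most two level sets meet at any point), apply \Cref{prop:nonoise} to each separately, and then invoke the five lemma on the long exact sequences of the pairs. Your write-up is in fact more careful than the paper's in spelling out why $k\le 2$ for $\GM{N}_-$, why there is an honest map of pairs $(\GM{N},\GM{N}_-)\hookrightarrow(\X^\epsilon,\X^\epsilon_-)$ inducing the ladder, and why one cannot shortcut with a single retraction of pairs.
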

	
	\begin{proof}
		A positive lower bound on the reaches of $\GM{N}$ and $\GM{N}_-$ is prescribed by \Cref{lem:regreachbound}. In particular, since each point in either $\GM{N}$ or $\GM{N}_-$ lies at the intersection of at most two level sets of $f$ and $h$, the reaches of $\GM{N}$ and $\GM{N}_-$ are bounded below by $\rho_2 > 0$ (where $\rho_2$ is given by \cref{eq:rhok} for the case $k = 2$). We can then apply \Cref{prop:nonoise} to recover the homotopy type of $\GM{N}$ and $\GM{N}_-$ from  $\X^\epsilon$ and $\X_-^\epsilon$. As we have isomorphisms $\HG_\bullet(\GM{N}) \xrightarrow{\cong}\HG_\bullet(\X^\epsilon)$ and  $\HG_\bullet(\GM{N}_-) \xrightarrow{\cong} \HG_\bullet(\X_-^\epsilon)$, applying the five lemma to the two long exact sequences of the pairs $(\X^\epsilon, \X_-^\epsilon)$ and $(\GM{N}, \GM{N}_-)$ gives the isomorphism $\HG_\bullet\qty(\GM{N}, \GM{N}_-) \cong \HG_\bullet(\X^\epsilon, \X_-^\epsilon)$.
	\end{proof}

	\subsection{Homology Inference via Uniform Sampling}
	
	We now consider random i.i.d. samples from a compactly supported measure on a Riemannian manifold $M$. A Riemannian manifold is endowed with a unique Riemannian measure $\mu_g$; for $p \in M$ and coordinate neighbourhood $(U,\set{x_i})$ of $p$ such that $\dd{_{p}x_i}$ are orthonormal in the cotangent space $T^\ast_pM$,
	\begin{equation}
	\mu_g(p) = \abs{\dd{_px_1} \wedge \cdots \wedge \dd{{}_px_m}}.
	\end{equation}
	
	The authors of \cite{niyogi_finding_2008} studied how many random i.i.d. draws from a probability measure $\nu$ would suffice to obtain an $\epsilon$-dense point sample of some measurable subset of $M$. We consider $\nu$ that is continuous with respect to $\mu_g$: that is, for any measurable subset $A \subset M$, $\mu_g(A) = 0$ implies $\nu(A) = 0$. Here we will follow the argument from \cite[Proposition 7.2]{niyogi_finding_2008}, summarised below.
	
	\begin{restatable} {lemma}{nswN}
		Let $A$ be a compact, measurable subset of a properly embedded Riemannian submanifold of $\R^d$. Suppose $\nu$ is a probability measure supported on $A$ that is continuous with respect to $\mu_g$, and let $\X$ be a finite set of i.i.d. draws according to $\nu$. If
		\begin{equation}
		\# \X \geq \frac{1}{K\qty(\frac{\epsilon}{2})} \qty(\log(\frac{\nu(A)}{K\qty(\frac{\epsilon}{4})}) + \log(\frac{1}{\kappa})). \label{eq:nswpointbound}
		\end{equation}
		where
		\begin{equation}
		K(r) = \inf_{p \in A} \nu\left(A \cap \ball{r}{p}\right), \label{eq:Kr}
		\end{equation}
		then $\X$ is $\epsilon$-dense in $A$ with probability $> 1-\kappa$. \label{lem:NSWdensity}
	\end{restatable}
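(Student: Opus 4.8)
The plan is to run the standard packing-plus-union-bound argument from \cite{niyogi_finding_2008}. First I would fix a maximal $\tfrac{\epsilon}{2}$-separated subset $\set{y_1,\dots,y_\ell}\subset A$, i.e.\ one with $\norm{y_i-y_j}\ge\tfrac{\epsilon}{2}$ for $i\ne j$ that cannot be enlarged; compactness of $A$ forces $\ell<\infty$. Maximality means every $a\in A$ satisfies $\norm{a-y_j}<\tfrac{\epsilon}{2}$ for some $j$, so the balls $\ball{\epsilon/2}{y_1},\dots,\ball{\epsilon/2}{y_\ell}$ cover $A$; and $\tfrac{\epsilon}{2}$-separation makes $\ball{\epsilon/4}{y_1},\dots,\ball{\epsilon/4}{y_\ell}$ pairwise disjoint. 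The key elementary observation is that if the sample $\X$ meets $A\cap\ball{\epsilon/2}{y_j}$ for every $j$, then $\X$ is $\epsilon$-dense in $A$: given $a\in A$, pick $y_j$ with $\norm{a-y_j}<\tfrac{\epsilon}{2}$ and $x\in\X\cap\ball{\epsilon/2}{y_j}$, so that $\norm{a-x}<\epsilon$.

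It then remains to estimate the probability that some $A\cap\ball{\epsilon/2}{y_j}$ is missed. Since the balls $\ball{\epsilon/4}{y_j}$ are disjoint and each has $\nu$-measure at least $K(\tfrac{\epsilon}{4})$ by the definition of $K$ in \cref{eq:Kr}, additivity of $\nu$ gives $\ell\cdot K(\tfrac{\epsilon}{4})\le\nu(A)$, hence $\ell\le\nu(A)/K(\tfrac{\epsilon}{4})$. Also, a single draw from $\nu$ misses $A\cap\ball{\epsilon/2}{y_j}$ with probability at most $1-K(\tfrac{\epsilon}{2})$, so all $n:=\#\X$ i.i.d.\ draws miss it with probability at most $(1-K(\tfrac{\epsilon}{2}))^n\le e^{-nK(\epsilon/2)}$. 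A union bound over $j=1,\dots,\ell$ bounds the overall failure probability by $\ell\,e^{-nK(\epsilon/2)}\le\tfrac{\nu(A)}{K(\epsilon/4)}\,e^{-nK(\epsilon/2)}$.

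Finally I would require this quantity to be at most $\kappa$; taking logarithms, the inequality $\tfrac{\nu(A)}{K(\epsilon/4)}\,e^{-nK(\epsilon/2)}\le\kappa$ rearranges to exactly $n\ge\tfrac{1}{K(\epsilon/2)}\big(\log\tfrac{\nu(A)}{K(\epsilon/4)}+\log\tfrac{1}{\kappa}\big)$, which is the stated hypothesis on $\#\X$. Under that hypothesis the failure event has probability $<\kappa$, so with probability $>1-\kappa$ every $A\cap\ball{\epsilon/2}{y_j}$ is hit and the observation of the first paragraph yields $\epsilon$-density. There is no genuine obstacle here; the only points requiring care are the bookkeeping of radii — using $\tfrac{\epsilon}{2}$ for the balls that must be hit and $\tfrac{\epsilon}{4}$ for the disjoint packing balls, so that the two occurrences of $K$ in the statement line up — and noting that the bound is vacuous (the right-hand side infinite) unless $K(\tfrac{\epsilon}{2})$ and $K(\tfrac{\epsilon}{4})$ are both positive, so one may harmlessly assume this throughout.
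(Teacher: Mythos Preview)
Your proof is correct and is exactly the packing-plus-union-bound argument of \cite[Proposition~7.2]{niyogi_finding_2008} that the paper cites; the paper itself does not spell out a proof for this lemma but simply attributes it to that reference, so your write-up fills in precisely the argument the paper has in mind.
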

	
	The key parameter that controls the bound on the right hand side of \cref{eq:nswpointbound} is $K(r)$; in particular, we require $K(r) >0$ for $r>0$ so that the bound is finite. That is the case if $A$ is a compact,  regular closed subset of $M$.
	
	\begin{lemma} \label{lem:Kpos}
		Let $A \subset M$ be a compact regular closed subset. For any $r > 0$, and measure $\nu$ supported on $A$ that is continuous with respect to $\mu_g$ on $M$,
		\begin{equation*}
		K(r) = \inf_{p \in A} \nu(\ball{r}{p} \cap A) > 0.
		\end{equation*}
		\begin{proof}
			We first show that $\nu(\ball{r}{p} \cap A) > 0$ for $p \in E$. Since $\nu$ is  supported on $A$, and continuous with respect to $\mu_g$, it suffices to show that $\ball{r}{p} \cap A$ contains some open subset of $M$ for all $p \in A$. Since $A$ is a regular closed subset of $M$, $p \in \closure{ \interior{A}} = A$, and thus we also have $\ball{r}{p} \cap \interior{A} \neq \emptyset$. Since $\ball{r}{p} \cap A$ contains a non-empty open set $\ball{r}{p} \cap \interior{A}$ for any $p \in A$, we therefore surmise that $\nu(\ball{r}{p} \cap A) > 0$. As $\nu(\ball{r}{p} \cap A)$ is continuous with respect to $p$ and positive (\Cref{lem:measurecont}), and $A$ is compact, its infimum $K(r)$ over $p \in A$ is positive.
		\end{proof}
	\end{lemma}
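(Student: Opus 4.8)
The plan is to split the statement into a pointwise assertion and a uniformization. First I would show that $\nu(\ball{r}{p}\cap A) > 0$ for every individual $p \in A$; then I would argue that the function $\phi\colon A \to [0,\infty)$ given by $\phi(p) = \nu(\ball{r}{p}\cap A)$ is lower semicontinuous, so that its infimum over the compact set $A$ is attained and is therefore strictly positive.

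For the pointwise step, the key is that $A$ is a regular closed subset of $M$, i.e.\ $A = \closure{\interior{A}}$. Fix $p \in A$; then $p$ is adherent to $\interior{A}$, so the open ball $\ball{r}{p}$ --- an open neighbourhood of $p$ in $\R^d$, and hence meeting $M$ in an open neighbourhood of $p$ --- must intersect $\interior{A}$. Thus $\ball{r}{p}\cap\interior{A}$ is a non-empty open subset of $M$ contained in $\ball{r}{p}\cap A$. A non-empty open subset of the Riemannian manifold $M$ has positive $\mu_g$-measure, and since $\nu$ is supported on $A$ and continuous with respect to $\mu_g$, any such set carries positive $\nu$-mass; hence $\nu(\ball{r}{p}\cap A) \geq \nu(\ball{r}{p}\cap\interior{A}) > 0$.

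For the uniformization I would exploit that $\ball{r}{p}$ is an \emph{open} ball, so that $\ball{r}{p}\cap A$ is the increasing union of the sets $\ball{r'}{p}\cap A$ over $r' < r$. Consequently, if $p_n \to p$ in $A$, then for each fixed $r' < r$ one has $\ball{r'}{p}\subseteq\ball{r}{p_n}$ once $n$ is large (namely once $\norm{p - p_n} < r - r'$), whence $\liminf_n \phi(p_n) \geq \nu(\ball{r'}{p}\cap A)$; letting $r' \uparrow r$ and using continuity of $\nu$ from below gives $\liminf_n \phi(p_n) \geq \phi(p)$. (Full continuity of $\phi$ is the content of the auxiliary \Cref{lem:measurecont}, but lower semicontinuity already suffices.) A lower semicontinuous function on a compact space attains its infimum, so there is $p_\ast \in A$ with $K(r) = \phi(p_\ast)$, and $\phi(p_\ast) > 0$ by the pointwise step; thus $K(r) > 0$.

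The step requiring the most care is the semicontinuity of $\phi$, since it involves the dependence of the Euclidean balls on their centres and a monotone-convergence argument for the measure $\nu$; everything else is a soft consequence of the regular-closedness of $A$ together with the support and absolute-continuity hypotheses on $\nu$, followed by the extreme-value principle on the compact set $A$.
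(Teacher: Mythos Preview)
Your proposal is correct and follows essentially the same two-step structure as the paper: first the pointwise positivity via regular-closedness of $A$, then passing to the infimum via compactness. The only difference is cosmetic---the paper invokes full continuity of $p \mapsto \nu(\ball{r}{p}\cap A)$ from \Cref{lem:measurecont}, whereas you supply a self-contained lower-semicontinuity argument (which, as you note, already suffices).
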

	
	As regular intersections are regular closed sets (\Cref{lem:regcapisreg}), \Cref{lem:Kpos} implies that a finite sampling bound on the type in \cref{eq:nswpointbound} can be obtained if $K(r)$ is bounded below away from zero. We focus on the case where the regular intersection can be written as an intersection of two function sublevel sets for simplicity of analysis. Furthermore, our index pairs constructed above
	\begin{equation*}
	\GM{N} = \suplevel{h}{\beta} \cap \sublevel{f}{\alpha} \qand \GM{N}_- = \sublevel{f}{\alpha} \cap \intlevel{h}{\beta}{\gamma}, \tag{\cref{eq:GMM}}
	\end{equation*}
	fall under this category: $\GM{N}_-$ is the intersection of $\sublevel{f}{\alpha}$ with the sublevel set $\sublevel{\tilde{h}}{0}$ of $\tilde{h} = (h-\beta)(h-\gamma)$. The remainder of this section is devoted to proving the following result.
	
	\begin{restatable}{theorem}{Ebound}
		\label{thm:ebound}
		Let $E = \sublevel{f_1}{0} \cap \sublevel{f_2}{0}$ be a regular intersection on $M$ and $ \level{f_1}{0} \cap \level{f_2}{0} \neq \emptyset$. Let $\X$ be a finite set of i.i.d. draws according to the Riemannian density $\mu_g$ on $M$, restricted to $E$. Then for $\epsilon < \rho_2$, if
		\begin{equation}
		\# \X \geq \frac{1}{K\qty(\frac{\epsilon}{2})} \qty(\log(\frac{\nu(E)}{K\qty(\frac{\epsilon}{4})}) + \log(\frac{1}{\kappa})).
		\end{equation}
		then $\X$ is $\epsilon$-dense in $A$ with probability $> 1-\kappa$, where
		\begin{equation}
		K(r) = \inf_{p \in A} \mu_g(A \cap \ball{r}{p}) \geq \vhstwo{\frac{\overline{\eta}_E(r)}{2} \cos(\theta\qty(\frac{\overline{\eta}_E(r)}{2}))}{\rho}{\phi_{12}} > 0,
		\end{equation}
		and  $\phi_{12}$, $\vhstwo{\cdot}{\cdot}{\cdot}$, and $\overline{\eta}_E$ are as defined in \Cref{lem:regintproj2}, \Cref{nota:balls}, and \Cref{prop:bottlethickbound} respectively.
	\end{restatable}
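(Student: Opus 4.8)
The plan is to combine the abstract sampling bound of \Cref{lem:NSWdensity} with the reach bound of \Cref{lem:regreachbound} and an explicit lower bound on the volume $\mu_g(E \cap \ball{r}{p})$ for points $p \in E$. The first part is immediate: since $E = \sublevel{f_1}{0} \cap \sublevel{f_2}{0}$ is a regular intersection, it is a regular closed subset of $M$ by \Cref{lem:regcapisreg}, and its reach is bounded below by $\rho_2$ by \Cref{lem:regreachbound} (the maximum number of level sets meeting at any point of $E$ is at most $2$). Thus $\mu_g$ restricted to $E$ is a compactly supported measure continuous with respect to $\mu_g$ on $M$, and \Cref{lem:NSWdensity} applies verbatim, yielding the stated point count in terms of $K(\epsilon/2)$ and $K(\epsilon/4)$. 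By \Cref{lem:Kpos} we already know $K(r) > 0$ for every $r > 0$; the real content of the theorem is the explicit geometric lower bound on $K(r)$.

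The main work, then, is to bound $\mu_g(E \cap \ball{r}{p})$ from below uniformly over $p \in E$. I would split into cases according to how many of the functions $f_1, f_2$ vanish at $p$. When $p$ lies in the interior of $E$ (neither constraint active), a ball of radius comparable to $r$ around $p$ in $M$ is entirely contained in $E$, and its volume is bounded below by a standard ball-volume estimate on a manifold of reach $\reach_M$ — this is where the quantity $\vball{\cdot}$-type bound enters. When exactly one constraint is active, $E$ locally looks like a half-space in $M$ cut out by a hypersurface whose normal curvature is controlled by $\Lambda/\mu$, so one loses only a bounded fraction of the ball; this is the half-space volume bound $\vhs{\cdot}{\cdot}$ referenced in \Cref{nota:balls}. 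The genuinely delicate case is when \emph{both} constraints are active at $p$: here $E$ locally resembles a "wedge" or "corner" cut out by two hypersurfaces meeting at an angle bounded away from $0$ and $\pi$ — this angle is exactly the $\phi_{12}$ controlled via the singular value bound $\mu$ on the Jacobian of $(f_1,f_2)$ from \Cref{lem:regintproj2}. One must show that the solid angle of this wedge is bounded below, and that within the wedge a definite-volume chunk survives at scale $r$; curvature of the cutting hypersurfaces and of $M$ itself forces one to shrink the effective radius from $r$ to something like $\overline{\eta}_E(r)$, where $\overline{\eta}_E$ is the "bottleneck-thickened" radius of \Cref{prop:bottlethickbound}. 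The final bound $\vhstwo{\tfrac{\overline{\eta}_E(r)}{2}\cos(\theta(\tfrac{\overline{\eta}_E(r)}{2}))}{\rho}{\phi_{12}}$ is precisely the volume of such a doubly-cut spherical cap region, and taking the infimum over the (at most three) cases — which is dominated by the wedge case — gives the stated $K(r)$.

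I expect the corner case to be the principal obstacle: one has to make sure that the two hypersurfaces $\level{f_1}{0}$ and $\level{f_2}{0}$, which are only approximately flat at scale $r$, still carve out a region whose volume is controlled purely by the transversality angle $\phi_{12}$ and the reach-type parameter $\rho$, with all the curvature corrections absorbed into the radius-shrinking function $\overline{\eta}_E$. Concretely, after projecting a neighbourhood of $p$ onto $T_pM$ via $\zeta_p$ (a diffeomorphism on $\ball{\reach_M}{p} \cap M$ by \Cref{prop:ballembed}), one estimates how far the true zero sets deviate from their tangent hyperplanes using the Hessian bound $\Lambda$ and the gradient lower bound $\mu$, chooses $r$ small enough that this deviation is a controlled fraction of $r$, and then computes the Euclidean volume of the intersection of the tangent-plane ball with two slabs meeting at angle $\phi_{12}$. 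Pulling this volume back through $\zeta_p^{-1}$ costs a bounded Jacobian factor (again controlled by $\reach_M$), and assembling everything produces the claimed explicit bound. The remaining steps — verifying $\overline{\eta}_E(r) > 0$ for $r$ in the allowed range $\epsilon < \rho_2$, and checking the monotonicity needed to plug $\epsilon/2$ and $\epsilon/4$ into the formula — are routine once the corner estimate is in hand.
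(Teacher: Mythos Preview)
Your high-level plan --- invoke \Cref{lem:NSWdensity}, then supply an explicit uniform lower bound on $\mu_g(E \cap \ball{r}{p})$ --- is exactly the paper's strategy, and your description of the corner case (project via $\zeta_p$, control the deviation of the zero-sets from their tangent hyperplanes, compute the volume of a doubly-truncated ball) essentially matches \Cref{lem:regintproj2}.

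Where you diverge from the paper is in the case decomposition, and here there is a real gap. You split according to which constraints are \emph{active at $p$}; the paper instead splits according to whether $p$ lies in $E_r = \{x \in E : d(x, \level{f_1}{0}\cap\level{f_2}{0}) \geq r\}$ or not. Your decomposition does not control the geometry of $\ball{r}{p}\cap E$: a point $p$ with $f_1(p)<0$ and $f_2(p)<0$ (your ``interior'' case) can still have $\ball{r}{p}$ meeting \emph{both} level sets without meeting their intersection --- a bottleneck. Likewise a point with exactly one constraint active can sit arbitrarily close to the other level set. In either situation your proposed bounds ($\vball{\cdot}$ and $\vhs{\cdot}{\cdot}$ respectively) do not apply as stated.

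This is precisely what $\overline{\eta}_E(r)$ is for, and you have its role inverted. In the paper, $\overline{\eta}_E$ is \emph{not} a curvature correction in the corner case; the corner estimate (\Cref{lem:regintproj2}) uses the full radius $\epsilon$. Rather, $\overline{\eta}_E(r)$ enters in the \emph{non}-corner case $p \in E_r$ (\Cref{lem:regintproj1}): one shrinks the ball from radius $r$ to radius $\overline{\eta}_E(r)$ so that, by the defining property of the bottleneck thickness, the shrunken ball can meet at most one of the two level sets, reducing to the single-sublevel-set bound of \Cref{lem:sublevelproj}. The final uniform bound (\Cref{prop:regintproj}) then takes the minimum over the two cases, which is governed jointly by $\overline{\eta}_E(r)$ and $\phi_{12}$. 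Once you reorganise the case split this way, the rest of your outline goes through.
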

	This bound captures two aspects of how the volume of a Euclidean ball intersected with $M$ can be diminished when we restrict to $E$. One aspect is the local geometry near the `cusps' of the intersection $\level{f_1}{0} \cap \level{f_2}{0}$, and this is parametrised by an angle parameter $\phi_{12}$. This angle, which we formally define in \Cref{lem:regintproj2}, is the smallest angle subtended by $\nabla f_1$ and $\nabla f_2$ in $T_pM$ for $p \in \level{f_1}{0} \cap \level{f_2}{0}$, and $\vhstwo{\cdot}{\cdot}{\phi_{12}}$ is a strictly decreasing function of $\phi_{12}$ which is non-zero for  $\phi_{12} < \pi$. The closer $\nabla f_1$ and $\nabla f_2$ are to aligning oppositely, the thinner the intersection is near the cusps, and the smaller the volume.
	
	The other pertinent aspect of the geometry of $E$ is the thickness of $E$ away from the cusps. This is parametrised by $\overline{\eta}_E(r)$, which takes into account the curvature effects of the individual level sets $\level{f_i}{0} \cap \partial E$  in the boundary, as well as the thickness at which the two individual level sets approach each other. The interaction of the two level sets is parametrised by what we call a bottleneck thickness, which we define in \Cref{def:bottleneckthickness}, and illustrate in \Cref{fig:bottle}. We provide a lower bound on $\overline{\eta}_E(r)$ in terms of properties of the constituent functions $f_1$ and $f_2$ in \Cref{prop:bottlethickbound}.
	
	\begin{notation} \label{nota:balls}
		Our volume bounds will be phrased in terms of the volumes of the following geometric quantities and objects:
		\begin{enumerate}
			\item For a manifold with reach $\reach_M$, let $\theta(r) = \arcsin(\frac{r}{2\reach_M})$;
			\item $\mathscr{B}_r$ denotes a ball of radius $r$ in $T_pM$, centered at the origin of $T_pM$ which is identified with $p$ in the ambient Euclidean space. We denote the volume of $\mathscr{B}_{r}$ as
			\begin{align}
			\vball{r} = \vol\qty(\mathscr{B}_{r}).
			\end{align}
			\item For $v \in T_pM$, we let $\mathfrak{B}(v)$ denote the ball in $T_pM$ centred at $v$ with radius $\norm{v}$ (so that $v$ is a radial vector of the ball). We let
			\begin{align}
			\vhs{r,s} := \vol\qty( \mathfrak{B}(su) \cap \mathscr{B}_{r})
			\end{align}
			be the volume of the hyperspherical cap that arises from the intersection on the right and side.
			for any unit vector $u \in \S_pM$. If $t$ is another unit vector in $\S_pM$, and $\anglegeo{u}{t} = \varphi > 0$, then we let
			\begin{align}
			\vhstwo{r}{s}{\varphi} := \vol\qty(\mathfrak{B}(su) \cap \mathfrak{B}(st) \cap \mathscr{B}_{r}).
			\end{align}
			We give an illustration of $\mathfrak{B}(su) \cap \mathfrak{B}(st) \cap \mathscr{B}_{r}$ in the two dimensional case in \Cref{fig:vhs2}.
		\end{enumerate}
	\end{notation}
	
	\begin{figure}
		\centering
		\includegraphics[width =  0.45\textwidth]{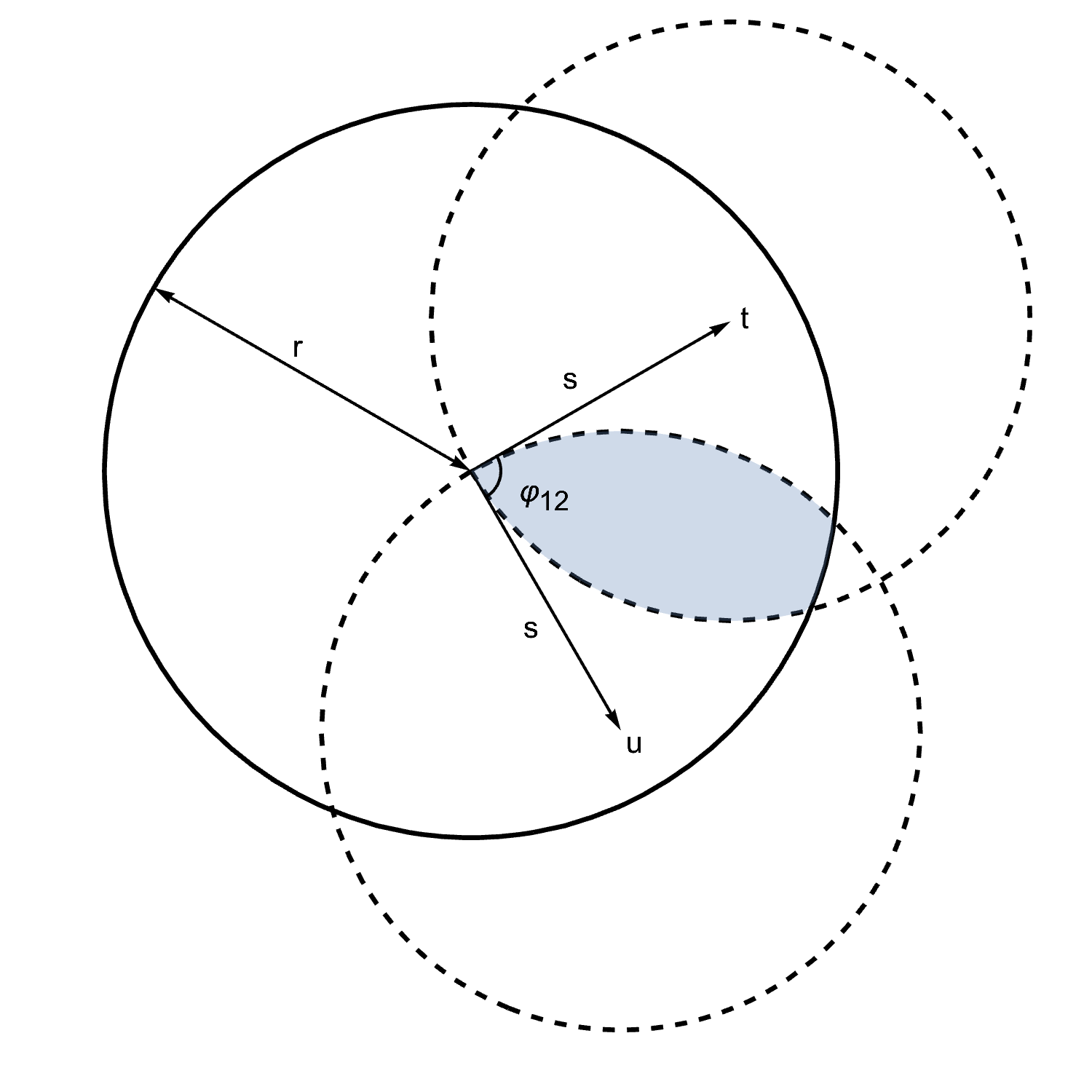}
		\caption{An illustration of the volume $\mathfrak{B}(su) \cap \mathfrak{B}(st) \cap \mathscr{B}_{r}$ (as defined in \Cref{nota:balls}) in the plane containing vectors $s$ and $t$ in $T_pM$. The shaded region represents the intersection of the three balls, where the ball with solid boundary represents $\mathscr{B}_r$, and the the balls with dashed boundaries represent $\mathfrak{B}(su)$ and $\mathfrak{B}(st)$ respectively. }
		\label{fig:vhs2}
	\end{figure}

	\begin{figure}
		\centering
		\includegraphics[width =  0.45\textwidth]{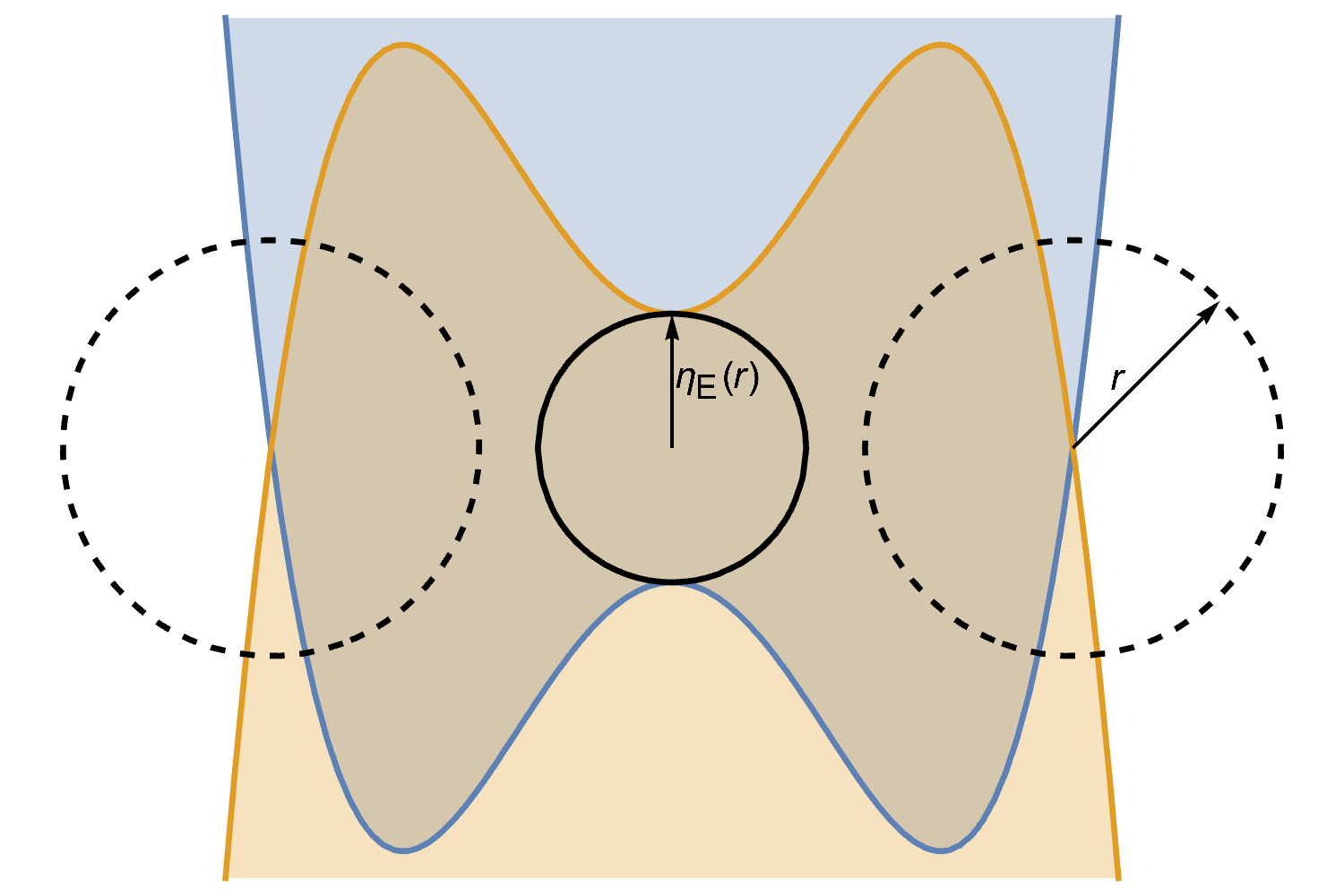}
		\caption{An illustration of the bottleneck distance $\eta_E(r)$ of a regualar intersection of two sublevel sets. }
		\label{fig:bottle}
	\end{figure}
	
	\subsection{Review: the Case of Manifolds and Regular Domains}
	In \cite{niyogi_finding_2008}, they bounded the volumes and probability measure of $A \cap \ball{r}{p}$  (where in their case $A = M$) by considering the volume of its image under the orthogonal projection $\zeta_p: M \to \tp{p}{M}$ onto the tangent plane $\tp{p}{M}$. We proceed with the same argument and begin by stating the following two lemmas that \cite{niyogi_finding_2008} implicitly relies on. These lemmas in turn rely on the result in \Cref{prop:ballembed} which gives a lower bound on the maximum radius $r$ such that $\zeta_p$ is a diffeomorphism onto its image when restricted to $\ball{r}{p} \cap M$.
	
	We first recall if we have a smooth map $F: M \to N$ between two Riemannian manifolds of the same dimensions, then we can pull back a density $\nu$ on $N$ to obtain a density $F^\ast\nu$ on $M$. Locally, if $\nu = u\abs{\dd{y_1} \wedge \cdots \wedge \dd{y_m}}$ on some local coordinate patch $(V, (y_i))$ of $F(p)$, then we can locally express the pullback $F^\ast\nu$ as
	\begin{align}
	F^\ast\nu(p) = F^\ast(u\abs{\dd{y_1} \wedge \cdots \wedge \dd{y_m}}) (p) = \abs{\det (\dd{{}_pF})} \cdot u(F(p)) \cdot \abs{\dd{x_1} \wedge \cdots \wedge \dd{x_m}}.
	\end{align}
	where $(U, (x_i))$ is a local coordinate patch of $p$ in $M$, and $\dd{F}$ is the Jacobian matrix of $F$ with respect to coordinates $(x_i)$ and $(y_i)$ on $U$ and $N$ respectively. The pullback satisfies an important property: if $F$ is a diffeomorphism, then one can show that
	\begin{align}
	\int_M F^\ast\nu = \int_N \nu.
	\end{align}
	
	\begin{lemma}\label{lem:projvollb} Suppose $U \subseteq \ball{r}{p} \cap M$ where $r \leq \reach_M$. Then
		\begin{align}
		\vol\qty({U}) \geq \vol\qty({\zeta_p(U)}).
		\end{align}
		where $\vol(U) = \mu_g(U)$, and  $\vol({\zeta_p(U)})$ is the $m$-dimensional volume of $\zeta_p(U)$ in the tangent plane.
	\end{lemma}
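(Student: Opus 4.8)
The plan is to realise $\zeta_p(U)$ as the diffeomorphic image of $U$ and then bound the Jacobian determinant of $\zeta_p$ by $1$. First I would observe that since $r \le \reach_M$ we have $U \subseteq \ball{r}{p}\cap M \subseteq \ball{\reach_M}{p}\cap M$, so by \Cref{prop:ballembed} the restriction of $\zeta_p$ to this set is a smooth embedding; in particular $\zeta_p|_U : U \to \zeta_p(U) \subset \tp{p}{M}$ is a diffeomorphism onto its image. Applying the pullback identity $\int_M F^\ast \nu = \int_N \nu$ recalled just above the statement — with $F = \zeta_p|_U$ and $\nu$ taken to be the Riemannian volume density on the flat manifold $\tp{p}{M}$, for which the local density factor $u$ is identically $1$ — therefore yields
\[
\vol\qty(\zeta_p(U)) = \int_{\zeta_p(U)} \nu = \int_U \zeta_p^\ast \nu = \int_U \abs{\det\qty(\dd{{}_q\zeta_p})}\, \mathrm{d}\mu_g(q),
\]
where $\dd{{}_q\zeta_p}: T_qM \to T_pM$ denotes the differential of $\zeta_p$ at the point $q \in U$.

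The second and only substantive step is to check that $\abs{\det(\dd{{}_q\zeta_p})} \le 1$ pointwise on $U$. By construction $\zeta_p$ is the restriction to $M$ of the ambient orthogonal projection $\R^d \twoheadrightarrow \tp{p}{M}$, so $\dd{{}_q\zeta_p}$ is nothing but the restriction of this orthogonal projection to the $m$-plane $T_qM \subset T_q\R^d \cong \R^d$. An orthogonal projection is a linear contraction, so all $m$ singular values of $\dd{{}_q\zeta_p}$ — viewed as a linear map between the $m$-dimensional inner-product spaces $T_qM$ and $T_pM$ — are at most $1$; since $\abs{\det(\dd{{}_q\zeta_p})}$ is precisely the product of these singular values (the factor by which $m$-dimensional volume is scaled), it is bounded above by $1$.

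Combining the two steps gives
\[
\vol\qty(\zeta_p(U)) = \int_U \abs{\det\qty(\dd{{}_q\zeta_p})}\, \mathrm{d}\mu_g(q) \le \int_U 1 \, \mathrm{d}\mu_g(q) = \mu_g(U) = \vol(U),
\]
which is the desired inequality. I do not expect a genuine obstacle here: the only point that needs care is that $\zeta_p|_U$ be an honest diffeomorphism onto its image, so that the first display is an equality rather than an inequality, and this is exactly what \Cref{prop:ballembed} (or already \Cref{prop:localdiff} together with injectivity on a small enough ball) supplies. Should one prefer not to assume $U$ open, the first display may be replaced by the inequality $\vol(\zeta_p(U)) \le \int_U \abs{\det(\dd{{}_q\zeta_p})}\,\mathrm{d}\mu_g$ coming from the area formula, after which the remainder of the argument is unchanged.
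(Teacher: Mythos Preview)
Your proof is correct and follows essentially the same approach as the paper: both use that $\zeta_p$ restricted to $U$ is a diffeomorphism onto its image (via \Cref{prop:ballembed}), apply the change-of-variables/pullback identity for densities, and bound $\abs{\det \dd_q\zeta_p}\le 1$ using that $\zeta_p$ is the restriction of an ambient orthogonal projection. The only cosmetic difference is that the paper pulls back $\mu_g$ along $\zeta_p^{-1}$ to $\zeta_p(U)$ (obtaining a factor $1/\abs{\det \dd\zeta_p}\ge 1$), whereas you pull back the Euclidean density along $\zeta_p$ to $U$; and the paper isolates the determinant bound as a separate lemma (\Cref{lem:projdet}) while you argue it inline via singular values.
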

	
	\begin{proof} Let $\nu$ be the Riemannian density of the tangent plane. Recall $\zeta_p$ restricted to $U$ is a diffeomorphism onto its image and let $F = \inv{\zeta_p}$ be its inverse. Then
		\begin{align*}
		\vol({U}) = \int_{U} \mu_g  = \int_{\zeta_p(U)} F^\ast\mu_g .
		\end{align*}
		Let $y_1, \ldots, y_m$ be a set of coordinates on the tangent plane that are orthonormal with respect to the ambient Euclidean metric restricted to the tangent plane, and consider the density at $z = \zeta_p(q) \in \zeta_p(U)$. If $x_1, \ldots, x_m$ is an orthonormal set of coordinates at $q \in T_qM$, then we can explicitly write the density $F^\ast\mu_g$ as
		\begin{align*}
		F^\ast\mu_g (z) &= F^\ast\abs{\dd{x_1} \wedge \cdots \wedge \dd{x_m}}(z) \\
		&= \abs{\det \dd{F}}\abs{\dd{y_1} \wedge \cdots \wedge \dd{y_m}}\\
		&= \frac{1}{\abs{\det \dd{\zeta_p}}}\abs{\dd{y_1} \wedge \cdots \wedge \dd{y_m}} \\
		& \geq \abs{\dd{y_1} \wedge \cdots \wedge \dd{y_m}}.
		\end{align*}
		where the inequality in the final line is due to $\abs{\det \dd{\zeta_p}} \leq 1$ (\Cref{lem:projdet}). Since $\abs{\dd{y_1} \wedge \cdots \wedge \dd{y_m}}$ is the Riemannian density on the tangent plane at $z = \zeta_p(q)$,
		\begin{align*}
		\vol\qty(U) = \int_{\zeta_p(U)} F^\ast\mu_g  \geq \vol\qty(\zeta_p(U)).
		\end{align*}
	\end{proof}
	
	We now derive a lower bound on $\vol\qty(\ball{\epsilon}{p})$ by describing the image of such balls in $T_pM$ under the projection $\zeta_p$. We first recall such descriptions for manifolds with boundary, where \cite{wang_topological_2020} differentiates into two cases where $p \in \partial M$ or $p \in \interior{M}$.
	
	\begin{lemma}[{\cite[Lemma 5.3]{niyogi_finding_2008}} and {\cite[Lemma 4.7]{wang_topological_2020}}] Let $M$ be a properly embedded manifolds with positive reach. Recall \Cref{nota:balls}, we have for any $p \in M$ and $r< \reach_M$,
		\begin{align}
		\zeta_p\qty(\ball{r}{p} \cap M) \supset \mathscr{B}_{r\cos(\theta(r))}
		\end{align}
		Consequently, $\vol\qty(\ball{r}{p} \cap M) \geq \vball{r\cos(\theta(r))}$.
		\label{lem:nswlem503}
	\end{lemma}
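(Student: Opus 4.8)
The plan is to establish the set containment $\zeta_p\qty(\ball{r}{p}\cap M)\supseteq\mathscr{B}_{r\cos\theta(r)}$ directly, and then read off the volume estimate from \Cref{lem:projvollb}. Throughout, identify the origin of $T_pM$ with $p$ and set $V:=\zeta_p\qty(\ball{r}{p}\cap M)\subseteq T_pM$. First I would record two preliminaries. Since $r<\reach_M<\sqrt{2}\,\reach_M$, \Cref{prop:localdiff} shows that $\zeta_p$ restricted to $\ball{r}{p}\cap M$ is a local diffeomorphism, hence an open map, so $V$ is an open subset of $T_pM$ with $0\in V$. And since $\ballc{r}{p}\cap M$ is compact, its image $\zeta_p\qty(\ballc{r}{p}\cap M)$ is compact, hence closed, and contains $V$; therefore $\closure{V}\subseteq\zeta_p\qty(\ballc{r}{p}\cap M)$.

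For the containment I would argue by contradiction. Suppose some $w\in\mathscr{B}_{r\cos\theta(r)}$ lies outside $V$, so $\norm{w}<r\cos\theta(r)$ but $w\notin V$. Because $V$ is open and contains $0$, the set $\set{t\in[0,1]\mid tw\notin V}$ is closed and nonempty, hence has a least element $t^\ast\in(0,1]$; writing $w^\ast=t^\ast w$, we have $w^\ast\notin V$ while $tw\in V$ for all $t<t^\ast$, so $w^\ast\in\closure{V}\subseteq\zeta_p\qty(\ballc{r}{p}\cap M)$. Pick $q\in\ballc{r}{p}\cap M$ with $\zeta_p(q)=w^\ast$; were $\norm{q-p}<r$ we would get $w^\ast\in V$, so in fact $\norm{q-p}=r$. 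Since $\zeta_p$ is the orthogonal projection onto the affine plane $p+T_pM$, the vector $q-\zeta_p(q)=q-w^\ast$ is the component of $q-p$ orthogonal to $T_pM$, i.e.\ $q-w^\ast\in N_pM$, and Pythagoras gives $r^2=\norm{q-p}^2=\norm{w^\ast}^2+\norm{q-w^\ast}^2$.

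The crux is to bound the normal component $\norm{q-w^\ast}$ using the reach. I would invoke \Cref{lem:angletonormal} with $A=M$: by \Cref{thm:manifoldtube}, any sufficiently short vector based at $p$ and pointing along $N_pM$ lies in $\UP{M,p}$, so choosing $x=p+\epsilon\cdot\normvec{q-w^\ast}$ for small $\epsilon>0$ (the case $q=w^\ast$ being handled trivially) and applying \Cref{lem:angletonormal} to $x$ and $q$ yields $\norm{q-w^\ast}<\norm{q-p}^2/\qty(2\fakelfs{M}{p})$; since $\fakelfs{M}{p}\ge\lfs{M}{p}\ge\reach_M$ by \Cref{lem:faketau0i} and \Cref{def:reach}, this gives $\norm{q-w^\ast}\le r^2/(2\reach_M)$. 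Substituting into the Pythagorean identity and using $\sin\theta(r)=r/(2\reach_M)$, we obtain $\norm{w^\ast}^2\ge r^2\qty(1-\tfrac{r^2}{4\reach_M^2})=r^2\cos^2\theta(r)$, hence $\norm{w^\ast}\ge r\cos\theta(r)$; but $\norm{w^\ast}=t^\ast\norm{w}\le\norm{w}<r\cos\theta(r)$, a contradiction. This proves $\mathscr{B}_{r\cos\theta(r)}\subseteq V$. The consequence is then immediate: \Cref{lem:projvollb}, applied with $U=\ball{r}{p}\cap M$ and the same $r<\reach_M$, gives $\vol\qty(\ball{r}{p}\cap M)\ge\vol(V)\ge\vol\qty(\mathscr{B}_{r\cos\theta(r)})=\vball{r\cos\theta(r)}$.

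I expect the main obstacle to be the compactness / ``first exit'' bookkeeping in the middle step: one must be careful to produce a preimage point $q$ that lands exactly on the boundary sphere $\partial\ball{r}{p}$ while staying inside the radius where \Cref{prop:localdiff} guarantees openness of the projection. Once $q$ is in hand, the normal-component estimate via \Cref{lem:angletonormal} and \Cref{thm:manifoldtube} together with the final trigonometric simplification are routine.
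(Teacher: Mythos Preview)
Your argument is correct and matches the paper's intended proof in all essentials. In the source the paper actually includes a proof of this lemma, but it is wrapped in \verb|\iffalse ... \fi| and so is suppressed in the compiled document (the statement is simply attributed to the cited references). That suppressed proof proceeds exactly as you do: it bounds the normal component $\norm{q-\zeta_p(q)}$ of a boundary point $q\in\partial\ball{r}{p}\cap M$ via \Cref{lem:angletonormal} (using \Cref{thm:manifoldtube} to produce a point in $\UP{M,p}$), obtains $\norm{\zeta_p(q)-p}\ge r\cos\theta(r)$ by Pythagoras, and then invokes \Cref{lem:projvollb} for the volume inequality.

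The only difference is in how the boundary point $q$ is located. The paper's version appeals directly to \Cref{prop:ballembed}: since $\zeta_p$ is a homeomorphism of $\ball{r}{p}\cap M$ onto its image, boundaries map to boundaries, and it suffices to check that every $\zeta_p(q)$ with $\norm{q-p}=r$ lies outside $\mathscr{B}_{r\cos\theta(r)}$. You instead use only openness from \Cref{prop:localdiff} together with a first-exit argument along a radial segment to manufacture a preimage on $\partial\ball{r}{p}$. Your route is marginally more elementary (it does not need injectivity of $\zeta_p$), at the cost of the extra ``first exit'' bookkeeping you flagged; the paper's route is shorter because \Cref{prop:ballembed} has already done that work. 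Either way the core estimate and the conclusion are identical.
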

	\iffalse
	\begin{proof} As $r < \reach_M$, the fact that $\zeta_p$ is a homeomorphism of $\ball{r}{p} \cap M$ onto its image in  \Cref{prop:ballembed} implies
		\begin{equation*}
		\zeta_p \qty( \partial (\ball{r}{p} \cap M))= \partial \zeta_p \qty(\ball{r}{p} \cap M) ).
		\end{equation*}
		Let us consider a point $q \in \partial \ball{r}{p} \cap M$ and the distance of $\zeta_p(q)$ from $p$ in $\tp{p}{M}$. As $w = q - \zeta_p(q) \in N_pM$, \Cref{lem:angletonormal} implies
		\begin{equation*}
		\innerprod{\normvec{p-q}}{\normvec{w}} \leq \frac{\norm{p-q}}{2\reach_M} < \frac{r}{2\reach_M} \implies \norm{\zeta_p(q)-q}> \sqrt{r^2 - \qty(\frac{r^2}{2\reach_M})^2} = r\cos(\theta(r)).
		\end{equation*}
		Hence $\zeta_p(q) \in \partial \zeta_p\qty(\ball{r}{p} \cap M)$ is at least $r\cos(\theta_1(r))$ away from $p$ and $\ball{r\cos(\theta_1(r))}{p} \cap \tp{p}{M}$ is contained in the interior of $\zeta_p\qty(\ball{r}{p} \cap M)$.
		
		The second half of the statement follows from applying \Cref{lem:projvollb} to the special case where $U = \ball{r}{p} \cap M$, and bounding the volume of $U$ using the volume of its subset as described in \Cref{lem:nswlem503}.
	\end{proof}
	\fi

	\begin{lemma}[{\cite[Lemma 4.6]{wang_topological_2020}}] Let $M \subset \R^d$ be a properly embedded submanifold with boundary. For $p \in \partial M$, let $n \in T_pM \cap N_p \partial M$ be the inward pointing unit normal at $p$. If $\zeta_p$ is a diffeomorphism when restricted to $\ball{\delta}{p} \cap M$, where $\delta < \min(\reach_M, \reach_{\partial M})$, then for $\epsilon \in (0,\delta)$,
		\begin{equation*}
		\zeta_p(\ball{\epsilon}{p} \cap M) \supset \mathscr{B}_{\epsilon\cos(\theta(\epsilon))} \cap \mathfrak{B}(\epsilon n)
		\end{equation*}
		where we recall notaion from \Cref{nota:balls}. Consequently, $\vol\qty(\ball{r}{p} \cap M) \geq \vhs{\epsilon \cos(\theta(\epsilon))}{\epsilon}$.
		
		\label{lem:wang}
	\end{lemma}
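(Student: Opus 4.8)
The plan is to follow the orthogonal‑projection strategy of Niyogi--Smale--Weinberger, refined at the boundary as in Wang. Since $\ball{\epsilon}{p}\cap M\subseteq\ball{\delta}{p}\cap M$, the hypothesis makes $\zeta_p$ a diffeomorphism of $\ball{\epsilon}{p}\cap M$ onto its image $W:=\zeta_p(\ball{\epsilon}{p}\cap M)\subseteq\tp{p}{M}$, so it suffices to locate $W$ inside $\tp{p}{M}$. As $\zeta_p$ is a homeomorphism onto $W$, the topological boundary of $W$ in $\tp{p}{M}$ lies in $\zeta_p\bigl(\partial(\ball{\epsilon}{p}\cap M)\bigr)$, which splits into a \emph{spherical part} $\Sigma_1=\zeta_p(\partial\ball{\epsilon}{p}\cap M)$ and a \emph{boundary part} $\Sigma_2=\zeta_p(\ball{\epsilon}{p}\cap\partial M)$. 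I will show that $\Sigma_1$ misses the open ball $\interior{\mathscr{B}_{\epsilon\cos(\theta(\epsilon))}}$ and that $\Sigma_2$ misses the open ball $\interior{\mathfrak{B}(\epsilon n)}$. Then the convex --- hence connected --- open set $U:=\interior{\mathscr{B}_{\epsilon\cos(\theta(\epsilon))}}\cap\interior{\mathfrak{B}(\epsilon n)}$ is disjoint from the boundary of $W$; since $U$ also meets $W$ (for instance $\zeta_p(\gamma(t))=p+tn+O(t^{2})\in U$ for small $t>0$, where $\gamma$ is a curve in $M$ with $\gamma(0)=p$ and $\dot\gamma(0)=n$), connectedness forces $U\subseteq W$, and a routine limiting argument using compactness of $\ballc{\epsilon}{p}\cap M$ upgrades this to $\mathscr{B}_{\epsilon\cos(\theta(\epsilon))}\cap\mathfrak{B}(\epsilon n)\subseteq W$.

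For the spherical part, fix $q\in M$ with $\norm{q-p}=\epsilon$. Since $\zeta_p$ is orthogonal projection onto $p+\tp{p}{M}$, the vector $q-\zeta_p(q)$ is the $N_pM$‑component of $q-p$; applying \Cref{lem:angletonormal} with $A=M$ at the points $p+\lambda\hat n\in\UP{M,p}$ (for unit $\hat n\in N_pM$ and small $\lambda$, which lie in $\UP{M,p}$ by \Cref{thm:manifoldtube}) and taking the supremum over $\hat n$ gives $\norm{q-\zeta_p(q)}<\tfrac{\epsilon^{2}}{2\reach_M}$ --- this is exactly the estimate underlying \Cref{lem:nswlem503}. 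Hence $\norm{\zeta_p(q)-p}^{2}=\epsilon^{2}-\norm{q-\zeta_p(q)}^{2}>\epsilon^{2}\bigl(1-(\epsilon/2\reach_M)^{2}\bigr)=\bigl(\epsilon\cos(\theta(\epsilon))\bigr)^{2}$, so $\zeta_p(q)\notin\mathscr{B}_{\epsilon\cos(\theta(\epsilon))}$.

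The boundary part is the crux. Fix $q\in\ball{\epsilon}{p}\cap\partial M$ and decompose $q-p=a+bn+w$ with $a\in T_p\partial M$, $b\in\R$, $w\in N_pM$; then $\zeta_p(q)-p=a+bn$, so $\norm{\zeta_p(q)-p}^{2}=\norm{a}^{2}+b^{2}$, $\innerprod{\zeta_p(q)-p}{n}=b$, and the desired statement $\zeta_p(q)\notin\mathfrak{B}(\epsilon n)$ is equivalent to the quadratic inequality $b^{2}-2\epsilon b+\norm{a}^{2}\ge 0$. Now $n$ and $w$ both lie in $N_p\partial M$ (using $N_pM\subseteq N_p\partial M$), so $bn+w$ is the $N_p\partial M$‑component of $q-p$; applying \Cref{lem:angletonormal} to $A=\partial M$ (again supplying points of $\UP{\partial M,p}$ via \Cref{thm:manifoldtube}) and taking a supremum yields $\sqrt{b^{2}+\norm{w}^{2}}\le\tfrac{\norm{q-p}^{2}}{2\reach_{\partial M}}$. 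Writing $s=\norm{a}^{2}$ and substituting $\norm{q-p}^{2}=s+b^{2}+\norm{w}^{2}$ (and noting $s\le\norm{q-p}^{2}<\epsilon^{2}<\reach_{\partial M}^{2}$), this rearranges to $\sqrt{b^{2}+\norm{w}^{2}}\le\reach_{\partial M}-\sqrt{\reach_{\partial M}^{2}-s}$. Since $c\mapsto c-\sqrt{c^{2}-s}$ is strictly decreasing on $(\sqrt{s},\infty)$ and $\reach_{\partial M}>\delta>\epsilon$, we obtain $b\le\sqrt{b^{2}+\norm{w}^{2}}\le\epsilon-\sqrt{\epsilon^{2}-s}$, which places $b$ at or below the smaller root of $X^{2}-2\epsilon X+s$ and is therefore precisely the inequality $b^{2}-2\epsilon b+s\ge 0$. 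The subtlety here --- the reason a direct nearest‑point comparison against $p+\tp{p}{M}$ fails --- is that $\zeta_p$ discards the $N_pM$‑component $w$, shrinking $\norm{\zeta_p(q)-p}^{2}$ below $\norm{q-p}^{2}$; the remedy is to measure $q-p$ against the smaller affine plane $p+T_p\partial M$, which absorbs \emph{both} $b$ and $w$, and then to convert the strict gap $\epsilon<\reach_{\partial M}$ into the required inequality through the monotonicity of $c\mapsto c-\sqrt{c^{2}-s}$.

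Finally, having established $W=\zeta_p(\ball{\epsilon}{p}\cap M)\supseteq\mathscr{B}_{\epsilon\cos(\theta(\epsilon))}\cap\mathfrak{B}(\epsilon n)$, apply \Cref{lem:projvollb} with $U=\ball{\epsilon}{p}\cap M$ and $r=\epsilon$ (valid since $\epsilon<\delta<\reach_M$) to get $\vol(\ball{\epsilon}{p}\cap M)\ge\vol(W)\ge\vol\bigl(\mathscr{B}_{\epsilon\cos(\theta(\epsilon))}\cap\mathfrak{B}(\epsilon n)\bigr)$; by \Cref{nota:balls} (with unit vector $n$, scale $\epsilon$, radius $\epsilon\cos(\theta(\epsilon))$) this last quantity equals $\vhs{\epsilon\cos(\theta(\epsilon))}{\epsilon}$, which proves both displayed assertions of the lemma.
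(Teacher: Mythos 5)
The paper offers no proof of this statement---it is imported wholesale as \cite[Lemma 4.6]{wang_topological_2020}---so there is nothing internal to compare against; what you have written is a self-contained reconstruction, and it is correct. Your architecture (push $\zeta_p$ forward to a homeomorphic image $W$, trap $\partial W$ inside the images of the spherical part $\partial\ball{\epsilon}{p}\cap M$ and the boundary part $\ball{\epsilon}{p}\cap\partial M$, exclude each from one of the two balls via \Cref{lem:angletonormal}, then conclude by connectedness) is the standard Niyogi--Smale--Weinberger projection scheme, and both quantitative steps are right: the spherical estimate $\norm{q-\zeta_p(q)}\leq\epsilon^{2}/(2\reach_M)$ is exactly the one underlying \Cref{lem:nswlem503}, and your boundary estimate correctly measures $q-p$ against $N_p\partial M$, so that the discarded $N_pM$-component $w$ is absorbed together with $bn$---this is indeed the point where a naive comparison against $T_pM$ alone breaks down, and the monotonicity of $c\mapsto c-\sqrt{c^{2}-s}$ is the right way to trade $\reach_{\partial M}$ for $\epsilon$. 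Three small repairs. First, when you pass from $t^{2}-2\reach_{\partial M}t+s\geq 0$ to ``$t$ is at most the smaller root,'' say why the larger root is excluded: $t\leq\norm{q-p}<\epsilon<\reach_{\partial M}\leq\reach_{\partial M}+\sqrt{\reach_{\partial M}^{2}-s}$. Second, \Cref{thm:manifoldtube} is stated for boundaryless closed submanifolds; invoking it for $M$ at the boundary point $p$ to supply points of $\UP{M,p}$ along $N_pM$ really uses only the inclusion $(p+N_pM)\cap\ball{r}{p}\subseteq\UP{M,p}$, which does persist at boundary points (the normal cone there contains $N_pM$), but the stated equality does not, so this should be flagged rather than cited as is; the application to $\partial M$ is unproblematic since $\partial M$ is boundaryless. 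Third, the closing ``routine limiting argument'' upgrading $U\subseteq W$ to the \emph{closed} intersection is not actually available---$W$ is the image of a set that is open in $M$ and need not contain every boundary point of the closed lens---but since \Cref{nota:balls} does not distinguish open from closed balls and only the volume of the intersection is used downstream (via \Cref{lem:projvollb}), the open statement you genuinely proved already yields both displayed conclusions; simply delete that sentence.
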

	
	Applying \Cref{prop:ballembed} to deduce the minimal radius $\delta$ in  \Cref{lem:wang}, we obtain the following result for regular domains.
	
	\begin{lemma} \label{lem:sublevelproj} Consider $\sublevel{f}{0}$, where $f: M \to \R$ is a smooth function on a submanifold $M$ of $\R^d$ with positive reach $\reach_M$ and $f$ satisfies \Cref{def:regcap1,def:regcap2}. Suppose for any $p \in \level{f}{0}$, we have $\norm{\nabla_p f} \geq \mu > 0$. Let
		\begin{equation}
		\frac{1}{\rho} = \frac{1}{\reach_M} + \frac{\Lambda}{\mu}.
		\end{equation}
		Recall \Cref{nota:balls}. If $\epsilon < \rho$, then
		\begin{enumerate}[start = 1, label={ ({\roman*})}]
			\item For $p \in \level{f}{0}$, and $n = -\normvec{\nabla_p f}$,
			\begin{equation*}
			\zeta_p(\ball{\epsilon}{p} \cap \sublevel{f}{0}) \supset  \mathscr{B}_{\epsilon\cos(\theta(\epsilon))} \cap \mathfrak{B}(\rho n), 
			\end{equation*}
			\item  Consequently, for \emph{any} $q \in \sublevel{f}{0}$,
			\begin{equation}
			\vol\qty(\ball{\epsilon}{q} \cap E) \geq \vhs{\frac{\epsilon}{2} \cos(\theta\qty(\frac{\epsilon}{2}))}{\rho}
			\end{equation}
		\end{enumerate}
		is a function of $\epsilon, \rho, \reach_M$, and the dimension of the manifold $m$.

		\begin{proof}\phantom{0}
			\begin{enumerate}[start = 1, label={ ({\roman*})}]
				\item Because $\rho < \reach_M$, \Cref{prop:ballembed} implies $\zeta_p$ is a diffeomorphism on $\ball{\rho}{p} \cap M$; thus, combined with the fact that $\rho < \min(\reach_{\level{f_i}{0}},\reach_{\sublevel{f_i}{0}})$ (\Cref{lem:regreachbound}), we can apply \Cref{lem:wang} and deduce the inclusion as stated.
				\item The proof adopts the proof of {\cite[Lemma 4.6]{wang_topological_2020}} for regular domains being a special case of submanifolds with boundary, which breaks down the analysis into two cases, whether $d(p, \partial E) > \frac{\epsilon}{2}$ or otherwise. In the first case, we can trivially bound $\vol\qty(\ball{\epsilon}{p} \cap E)$ from below by $\vol\qty(\ball{\epsilon/2}{p} \cap E) $. Since $\ball{\epsilon/2}{p} \cap  \partial E = \emptyset$, we have $\ball{\epsilon/2}{p} \cap   E =\ball{\epsilon/2}{p} \cap  M$. The volume of the latter is bounded below by $\vball{\frac{\epsilon}{2} \cos(\frac{\epsilon}{2})}$ (by \Cref{lem:nswlem503}), which is in turn bounded below by $\vhs{\frac{\epsilon}{2} \cos(\theta\qty(\frac{\epsilon}{2}))}{\rho}$ by definition (see \Cref{nota:balls}).
				
				For the case where $d(p, \partial E) \leq \frac{\epsilon}{2}$, choose a nearest neighbor $x$ of in $\partial E$. As $\ball{\epsilon/2}{x} \subset \ball{\epsilon}{p}$, we can bound the volume of the former by the latter. The latter's volume is bounded by that of its projection (\Cref{lem:projvollb}), and we arrive at the stated bound.
				
			\end{enumerate}
		\end{proof}
	\end{lemma}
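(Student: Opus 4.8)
The plan is to treat $\sublevel{f}{0}$ as a \emph{regular domain} --- a compact codimension-zero submanifold of $M$ whose boundary is the regular level set $\level{f}{0}$ --- and to run the tangent-projection volume estimates of \cite{niyogi_finding_2008,wang_topological_2020} with the reach lower bound supplied by \Cref{lem:regreachbound}.

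For part (i) I would first assemble the reach data that \Cref{lem:wang} needs. Viewing $\level{f}{0}$ and $\sublevel{f}{0}$ as the $(\mu,\Lambda)$-regular intersections $\level{f}{I}$ with $I = \set{0}$ and $I = \lezero$, \Cref{lem:regreachbound} gives $\reach_{\level{f}{0}} \geq \rho$ and $\reach_{\sublevel{f}{0}} \geq \rho$, while $\rho \leq \reach_M$ is immediate from $\frac{1}{\rho} = \frac{1}{\reach_M} + \frac{\Lambda}{\mu}$. Since $\rho \leq \reach_M$, \Cref{prop:ballembed} shows that $\zeta_p$ restricts to a diffeomorphism onto its image on $\ball{\rho}{p}\cap M$, a fortiori on $\ball{\rho}{p}\cap\sublevel{f}{0}$. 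Next I would identify the inward unit conormal of $\partial\sublevel{f}{0} = \level{f}{0}$ inside $M$ at $p$: since $f$ strictly decreases along $-\nabla f$, this conormal is exactly $n = -\normvec{\nabla_p f}$. Feeding all of this into \cite[Lemma 4.6]{wang_topological_2020} (as recorded in \Cref{lem:wang}), applied to the manifold-with-boundary $\sublevel{f}{0}$ with the diffeomorphism radius taken to be the full $\rho$, yields for every $\epsilon < \rho$ the inclusion
\[
\zeta_p\qty(\ball{\epsilon}{p}\cap\sublevel{f}{0}) \supset \mathscr{B}_{\epsilon\cos(\theta(\epsilon))} \cap \mathfrak{B}(\rho n),
\]
which is (i); the essential point is that the radius of the inward ball in Wang's conclusion ($\mathfrak{B}(\rho n)$ rather than $\mathfrak{B}(\epsilon n)$) is the diffeomorphism radius, and the reach estimates above are precisely what license enlarging it to $\rho$.

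For part (ii), fix an arbitrary $q \in E := \sublevel{f}{0}$ and split on $d(q,\level{f}{0})$, following the proof of \cite[Lemma 4.6]{wang_topological_2020}. If $d(q,\level{f}{0}) > \frac{\epsilon}{2}$, then $\ball{\epsilon/2}{q}$ misses $\level{f}{0}$; since $\ball{\epsilon/2}{q}\cap M$ is connected (\Cref{thm:geomdistortion}, as $\frac{\epsilon}{2} < \rho \leq \reach_M$) and $f(q)\leq 0$, continuity forces $f\leq 0$ throughout $\ball{\epsilon/2}{q}\cap M$, so $\ball{\epsilon/2}{q}\cap E = \ball{\epsilon/2}{q}\cap M$; hence $\vol(\ball{\epsilon}{q}\cap E) \geq \vol(\ball{\epsilon/2}{q}\cap M) \geq \vball{\frac{\epsilon}{2}\cos(\theta\qty(\frac{\epsilon}{2}))} \geq \vhs{\frac{\epsilon}{2}\cos(\theta\qty(\frac{\epsilon}{2}))}{\rho}$, using \Cref{lem:nswlem503} and $\mathfrak{B}(\rho u)\cap\mathscr{B}_{r}\subseteq\mathscr{B}_r$ for any $u,r$. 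If instead $d(q,\level{f}{0})\leq\frac{\epsilon}{2}$, pick a nearest point $x\in\level{f}{0}$ to $q$; then $\ball{\epsilon/2}{x}\subseteq\ball{\epsilon}{q}$, so $\vol(\ball{\epsilon}{q}\cap E) \geq \vol(\ball{\epsilon/2}{x}\cap E)$, and applying part (i) at the boundary point $x$ with radius $\frac{\epsilon}{2}$ together with the projection volume inequality \Cref{lem:projvollb} (valid since $\frac{\epsilon}{2} < \reach_M$) gives
\[
\vol\qty(\ball{\epsilon/2}{x}\cap E) \geq \vol\qty(\zeta_x(\ball{\epsilon/2}{x}\cap E)) \geq \vol\qty(\mathscr{B}_{\frac{\epsilon}{2}\cos(\theta\qty(\frac{\epsilon}{2}))}\cap\mathfrak{B}(\rho n_x)) = \vhs{\frac{\epsilon}{2}\cos(\theta\qty(\frac{\epsilon}{2}))}{\rho},
\]
with $n_x = -\normvec{\nabla_x f}$ and the last equality because $\vhs{\cdot}{\cdot}$ depends only on the two radii, not on the direction of $n_x$. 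In either case the bound is $\vhs{\frac{\epsilon}{2}\cos(\theta\qty(\frac{\epsilon}{2}))}{\rho}$, which by \Cref{nota:balls} is a function of $\epsilon$, $\rho$, $\reach_M$ (through $\theta$), and $m = \dim M$ alone.

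The step I expect to be the main obstacle is part (i): one must justify that $\rho$ itself --- not merely $\epsilon$ --- is simultaneously a radius on which $\zeta_p$ is a diffeomorphism onto its image and on which the boundary hypersurface $\level{f}{0}$ is curvature-controlled, so that the inward ball in Wang's estimate can be enlarged all the way to $\mathfrak{B}(\rho n)$; this is where \Cref{prop:ballembed}, the two instances of \Cref{lem:regreachbound} (for $\level{f}{0}$ and for $\sublevel{f}{0}$), and the correct identification of the inward conormal $n = -\normvec{\nabla_p f}$ must all be brought together, and it is the only place where any genuine geometry is used --- in particular one should invoke \cite[Lemma 4.6]{wang_topological_2020} with its inward-ball radius set to the diffeomorphism radius $\delta = \rho$. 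Once part (i) is in hand, part (ii) is essentially bookkeeping --- the one mild subtlety there is that a generic $q$ has to be reduced to a boundary point $x$ at the cost of halving the radius, which is exactly why the final estimate carries $\frac{\epsilon}{2}\cos(\theta\qty(\frac{\epsilon}{2}))$ rather than $\epsilon\cos(\theta(\epsilon))$.
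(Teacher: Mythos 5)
Your proposal is correct and follows essentially the same route as the paper: part (i) via the reach bounds from \Cref{lem:regreachbound} together with \Cref{prop:ballembed} and \Cref{lem:wang}, and part (ii) via the two-case split on $d(q,\level{f}{0})$ versus $\epsilon/2$, using \Cref{lem:nswlem503} in the interior case and \Cref{lem:projvollb} plus part (i) at a nearest boundary point in the other. You supply slightly more detail than the paper does at the two points it glosses over (why the inward ball may be taken with radius $\rho$ rather than $\epsilon$, and why $\ball{\epsilon/2}{q}\cap E = \ball{\epsilon/2}{q}\cap M$ in the interior case), but the argument is the same.
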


	\subsection{Regular Intersections of Two Functions}
	We now consider a regular intersections of two function $E = \sublevel{f_1}{0} \cap \sublevel{f_2}{0}$.
	We bound the volume of $\ball{\epsilon}{p} \cap E$ from below by dividing the set of points $p \in E$ into two cases. Let $E_r$ denote the set of points
	\begin{equation}
	E_r = \sett{x \in E}{d\left(x, \level{f_1}{0} \cap \level{f_2}{0}\right) \geq r}.
	\end{equation}
	Then either:
	\begin{enumerate}[start = 1, label={ ({\bfseries Case \arabic*})}]
		\item  \label{onedimI} $p \in E_\epsilon$, i.e., $\ball{\epsilon}{p}$ may intersect both $\level{f_1}{0}$ and $\level{f_1}{0}$, but not $\level{f_1}{0} \cap \level{f_2}{0}$; else
		\item  \label{twodimI} $p \notin E_\epsilon$, i.e., $\ball{\epsilon}{p}$ intersects $\level{f_1}{0} \cap \level{f_2}{0}$.
	\end{enumerate}
	In \Cref{onedimI}, we require an additional lengthscale to control the geometry of $E$ in the $\epsilon$-ball about $p$.
	
	\begin{definition}\label{def:bottleneckthickness} Let $E$ be a regular intersection where $E = \sublevel{f_1}{0} \cap \sublevel{f_2}{0}$. The $r$-{\bf bottleneck thickness} $\eta_E(r)$ of $E$ is
		\begin{equation}
		\eta_E(r) = \sup \sett{\epsilon \in (0,  r) }{\forall x \in E_r\qc \ball{\epsilon}{x} \cap \level{f_i}{0}\neq \emptyset \text{ for at most one } i \in \qty{1,2}}.
		\end{equation}
	\end{definition}
	Note that in the case where $\level{f_1}{0} \cap \level{f_2}{0} = \emptyset$ (i.e. $E = E_r$ for all $r$), we can write $\partial E =  \level{f_1}{0} \sqcup \level{f_2}{0}$. The bottleneck thickness $\eta_E$ is then the largest radius for which we can thicken $\partial E$ such that the thickenings of $\level{f_1}{0}$ and $\level{f_2}{0}$ do not intersect. Thus $\eta_E$ is a homological critical value in the thickening of $\partial E$, implying the bottleneck thickness $\eta_E$ is an upper bound on the reach of the manifold $\partial E$ in $\R^d$. Furtheremore, writing $\partial E = \level{F}{0}$ for $F = f_1f_2$, one can check that $\dd{F}$ is nowhere zero on $\partial E$ and we can derive an explicit bound on the reach of $\partial E$ using \Cref{lem:fsingle}, thus in turn providing a lower bound on $\eta_E$.
	
	In the more general case, we can also interpret $\eta_E$ as thus. For any point $x$ on $E$ that are at least $r$ away from  $\level{f_1}{0} \cap \level{f_2}{0}$ on $E$, the bottleneck thickness $\eta_E(r)$ prescribes the largest radius such that for  $t < \eta_E(r)$,
	\begin{equation*}
	\ball{t}{x} \cap E = \ball{t}{x} \cap \sublevel{f_i}{0}
	\end{equation*}
	for $i$ either $1$ or $2$. Thus, for such points, we can apply results such as \Cref{lem:sublevelproj} to bound the volume of $\ball{r}{x} \cap M$. We now derive a positive lower bound on $\eta_E(r)$.
	
	\begin{proposition} \label{prop:bottlethickbound} Let $E$ be a regular intersection where $E = \sublevel{f_1}{0} \cap \sublevel{f_2}{0}$. Let $\reach_i$ denote the reach of the submanifold $\level{f_i}{0}$ for $i \in \qty{1,2}$. Let $F = f_1f_2: M \to \R$. For $0 < r/2 < \min\qty(\reach_1, \reach_2)$, let $\mu_{F}(r) = \inf_{x \in E_{r/2} \cap \level{F}{0}} \norm{\nabla F}$, and $\Lambda_{F}$ be the supremum of the norm of the Hessian of $F$ on $M$. Let
		\begin{equation}
		\frac{1}{\rho_F(r)} = \frac{1}{\reach_M} + \frac{\Lambda_{F}}{\mu_{F}(r)}.
		\end{equation}
		Then  $\eta_E(r) \geq \overline{\eta}_E(r)$ where
		\begin{equation}
		\overline{\eta}_E(r) = \min\qty(\frac{r}{2}, \rho_F(r)) > 0.
		\end{equation}
	\end{proposition}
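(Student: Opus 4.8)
The plan is to run the argument through \Cref{lem:fsingle} applied to the single product function $F := f_1 f_2$. The observation that makes this work is that away from the cusp locus $C := \level{f_1}{0}\cap\level{f_2}{0}$ the gradient $\nabla F$ cannot vanish, so near such points $\level{F}{0} = \level{f_1}{0}\cup\level{f_2}{0}$ behaves like a single regular hypersurface and \Cref{lem:fsingle} controls how far it is from its own medial axis. I would then argue by contradiction: suppose $x\in E_r$ and $\ball{\epsilon}{x}$ meets both $\level{f_1}{0}$ and $\level{f_2}{0}$ with $\epsilon<\overline{\eta}_E(r)$. Let $p_i$ be the nearest point of $\level{f_i}{0}$ to $x$ (unique, since $\epsilon<\overline{\eta}_E(r)\le r/2<\reach_i$); these are distinct, since $p_1=p_2$ would force a point of $C$ within $\epsilon<r$ of $x$, contradicting $x\in E_r$. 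By \Cref{thm:manifoldtube} applied to the submanifold $\level{f_i}{0}$ of $\R^d$, the vector $x-p_i$ is orthogonal to $\level{f_i}{0}$ at $p_i$; and since $f_i(p_i)=0$ makes $\nabla_{p_i}F$ a nonzero scalar multiple of $\nabla_{p_i}f_i$, this says $x-p_i = n_i + \lambda_i\nabla_{p_i}F$ with $n_i\in N_{p_i}M$. Applying \Cref{lem:fsingle}, part (ii), to $F$ at $p_1$ and again at $p_2$ would then give $\projmap{\level{F}{0}}{x}=p_1$ and $\projmap{\level{F}{0}}{x}=p_2$ simultaneously, hence $p_1=p_2$ --- a contradiction.

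To make this rigorous and to extract the stated bound, I would first settle positivity of $\overline{\eta}_E(r)$. The set $E_{r/2}\cap\level{F}{0}$ is closed in the compact manifold $M$, hence compact, and every point $y$ in it lies at distance $\ge r/2>0$ from $C$, so exactly one of $f_1(y), f_2(y)$ vanishes --- say $f_1(y)=0$ and $f_2(y)\ne 0$ --- giving $\nabla_y F = f_2(y)\,\nabla_y f_1\ne 0$ since $0$ is a regular value of $f_1$. Compactness yields $\mu_F(r)>0$; as $1/\rho_F(r) = 1/\reach_M + \Lambda_F/\mu_F(r)$ is finite and at least $1/\reach_M$, we get $\rho_F(r)\in(0,\reach_M]$, and therefore $\overline{\eta}_E(r)=\min(r/2,\rho_F(r))>0$. (If $E_{r/2}\cap\level{F}{0}=\emptyset$, read $\mu_F(r)=\infty$ and $\rho_F(r)=\reach_M$; the conclusion still holds.) The one numerical subtlety in the contradiction above is that \Cref{lem:fsingle}, part (ii), applied to $F$ at $p_i$ certifies $\projmap{\level{F}{0}}{x}=p_i$ only when $\norm{x-p_i}$ is below $\big(\Lambda_F/\norm{\nabla_{p_i}F} + 1/\reach_M\big)^{-1}$; since $\norm{x-p_i}<\epsilon<\rho_F(r)$, it is enough that $\norm{\nabla_{p_i}F}\ge\mu_F(r)$, and this is guaranteed precisely when $p_i\in E_{r/2}\cap\level{F}{0}$ --- equivalently $f_j(p_i)\le 0$. (Here one also uses $\epsilon<\reach_M$, so that nearest points onto $M$ are unique and minimizing geodesics between points of $\ballc{\epsilon}{x}\cap M$ stay in $\ball{\epsilon}{x}\cap M$ by \Cref{thm:geomdistortion}.)

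The main obstacle is exactly that last proviso: the nearest point $p_i$ of $\level{f_i}{0}$ to $x$ need not lie in $E$ --- it may fall on the stretch of $\level{f_i}{0}$ where $f_j>0$ --- and there $\mu_F(r)$ gives no grip on $\norm{\nabla_{p_i}F}$. The fix is to replace such a $p_i$ by an honest boundary point before invoking \Cref{lem:fsingle}: if $f_2(p_1)>0$, then along a minimizing geodesic in $\ball{\epsilon}{x}\cap M$ from $x$ (where $f_2\le 0$, since $x\in E$) to $p_1$ the function $f_2$ changes sign, and by tracking the signs of both $f_1$ and $f_2$ along this geodesic one produces a point $q\in\partial E\cap E_{r/2}$ that still realizes the pertinent distance to $x$; one must simultaneously verify that the orthogonality $x-q\in N_qM + \R\,\nabla_q F$ survives the replacement (cleanest via the nearest point of the closed set $\level{f_i}{0}\cap\sublevel{f_j}{0}$ to $x$), and separately handle the degenerate case $x\in\partial E$, where $d(x,\level{F}{0})=0$ and the argument has to be launched from a point nudged slightly into $\interior{E}$ --- still inside $E_r$ because $\epsilon<\overline{\eta}_E(r)\le r/2$. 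Once this sign bookkeeping is in place, the remaining ingredients --- \Cref{lem:fsingle}, \Cref{thm:manifoldtube}, \Cref{thm:geomdistortion}, and elementary properties of nearest-point projections --- enter routinely.
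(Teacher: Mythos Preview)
Your plan is the paper's: reduce to \Cref{lem:fsingle} applied to $F=f_1f_2$, project the point in $E_r$ to $p_i\in\level{f_i}{0}$, and derive a contradiction. The only cosmetic difference is that you invoke \Cref{lem:fsingle}(ii) at both $p_1$ and $p_2$ to force $p_1=p_2$, whereas the paper works only with the further projection (say $\norm{x-p_1}\ge\norm{x-p_2}$), concludes that $p_1$ is the \emph{unique} nearest point of $\level{F}{0}$ to $x$, and then contradicts this with the nearer $p_2\in\level{F}{0}$. The subtlety you raise in your final paragraph --- that $p_i$ need not lie in $E$, so that $\mu_F(r)$ may fail to bound $\norm{\nabla_{p_i}F}$ from below --- is one the paper does not engage with: it simply asserts $p_i\in E_{r/2}$ from $\norm{x-p_i}<r/2$, which secures the distance-to-$C$ condition but not $f_j(p_i)\le 0$. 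Your instinct to patch this via geodesic sign-tracking, or by projecting instead to the manifold-with-boundary $\level{f_i}{0}\cap\sublevel{f_j}{0}$, goes beyond what the paper actually writes down.
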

	
	\begin{proof}
		We show that $\rho_F(r) > 0$ by showing $\mu_{F}(r) > 0$. For $x \in E_{r/2} \cap \level{F}{0}$, since $E_{r/2}$ excludes  $\level{f_1}{0} \cap \level{f_2}{0}$, we observe that $f_1(x)$ and $f_2(x)$ cannot both be zero at $x$. Moreover, since $E$ is a regular intersection, by \Cref{def:regcap}  we have $\dd{f_i} \neq 0$. Thus, for such a point $x$, either
		\begin{equation*}
		\dd{F}(x) = f_1\dd{f_2} \neq 0 \qq{or} \dd{F}(x) = f_2\dd{f_1} \neq 0.
		\end{equation*}
		Furthermore, one can check that $E_{r/2} \cap \level{F}{0}$ is compact and thus the quantity \[\mu_{F}(r) = \inf_{x \in E_{r/2} \cap \level{F}{0}} \norm{\nabla F}\] is positive.
		
		Consider then the Euclidean ball $\ball{\epsilon}{p}$ where $p \in E_r$ and $ \epsilon < \frac{r}{2}$. Since we have restricted $p \in E_r$, the ball $\ball{\epsilon}{p}$ cannot intersect $\level{f_1}{0} \cap \level{f_2}{0}$. Assume
		\begin{align*}
		\ball{\epsilon}{p} \cap \level{f_1}{0} \neq \emptyset, \qand \ball{\epsilon}{p} \cap \level{f_2}{0} \neq \emptyset.
		\end{align*}
		Since $\epsilon <  r/2 < \reach_i$ by assumption, $p$ has unique projections onto $\level{f_1}{0}$ and $\level{f_1}{0}$ respectively (\Cref{thm:fedtube}); let these projections be $p_i$ and note that $\norm{p - p_i} < r/2$. Note that $f_1(p_1) = f_2(p_2) = 0$, but $f_1(p_2), f_2(p_1) \neq 0$. As $p \in E_r$, and $\norm{p - p_i} < r/2$, we see that $p_i \in E_{r/2}$.
		
		Since $\ball{\epsilon}{p}$ cannot intersect $\level{f_1}{0} \cap \level{f_2}{0}$, we can suppose without loss of generality that $p \neq p_1$ and $\norm{p-p_1} \geq \norm{p-p_2}$. Since $p_1$ is a nearest neighbour of $p$ in $\level{f_1}{0}$, \Cref{lem:fsingle1} implies
		\begin{equation*}
		p - p_1 = n + \lambda \nabla f_1(p_1) = n + \frac{\lambda}{f_2(p_1)}\nabla F(p_1)
		\end{equation*}
		for some $n \in N_{p_1}M$. Because $p_1 \in \ball{\epsilon}{p}$ and $\ball{\epsilon}{p} \cap \level{f_1}{0} \cap \level{f_2}{0} = \emptyset$, we observe that $f_2(p_1) \neq 0$, and $\nabla F(p_1) =  f_2(p_1) \nabla f_1(p_1)$. Thus, we can write
		\begin{equation*}
		p - p_1 = n + \frac{\lambda}{f_2(p_1)}\nabla F(p_1).
		\end{equation*}
		In other words, the unit vector of $p-p_1$ lies in the normal space of $\level{F}{0}$ at $p$. If $\epsilon \leq \rho_F(r)$, then we have $\norm{p-p_1} < \rho_F(r)$. \Cref{lem:fsingle2} then implies $p_1$ is the \emph{unique} nearest neighbour of $p$ in $\level{F}{0}$. However, this contradicts our assumption that $\norm{p-p_1} \geq \norm{p-p_2}$, as $p_2 \in \level{F}{0}$ too. Therefore, $\epsilon  > \rho_F(r)$.
		
		Put in other words, either $\eta_E(r) \geq \frac{r}{2}$; else, $\eta_E(r) \geq \rho_F(r)$. We conclude that
		\begin{equation*}
		\eta_E(r) \geq \min\qty(\frac{r}{2}, \rho_F(r)).
		\end{equation*}
	\end{proof}
	
	Having obtained a lower bound for $\eta_E(r)$, we can bound the volume $\ball{r}{p} \cap E$ for $p \in E_r$.
	
	\begin{lemma} \label{lem:regintproj1} Consider $E = \sublevel{f_1}{0} \cap \sublevel{f_2}{0}$. Suppose $f_1$ and $f_2$ satisfy the conditions placed on $f$ in \Cref{lem:sublevelproj}, and $\rho$ be as defined in \Cref{lem:sublevelproj}. Let $\overline{\eta}_E(\epsilon)$ is as defined in \Cref{prop:bottlethickbound}. Then for $p \in E_\epsilon$,
		\begin{equation*}
		\vol\qty(\ball{\epsilon}{p} \cap E) \geq \vhs{\frac{\overline{\eta}_E(\epsilon)}{2} \cos\qty( \theta\qty(\frac{\overline{\eta}_E(\epsilon)}{2}))}{\rho}.
		\end{equation*}

		\begin{proof}
			By definition, $\overline{\eta}_E(\epsilon) \leq \epsilon$ (\Cref{prop:bottlethickbound}). As $\epsilon < \rho$, and $\rho$ is a lower bound on the reaches $\reach_i$ of $\level{f_i}{0}$ by \Cref{lem:regreachbound}), we have $\overline{\eta}_E(\epsilon) < \reach_i$. And the conditions of \Cref{prop:bottlethickbound} are satisfied so that $\overline{\eta}_E(\epsilon)$ is a positive lower bound on $\eta_E(\epsilon)$. Thus, for $p \in E_\epsilon$, the Euclidean ball $\ball{\overline{\eta}_E(\epsilon)}{p}$ can only intersect at most one level set $\level{f_i}{0}$. In other words, we can write without loss of generality that
			\begin{equation*}
			\ball{{\epsilon}}{p} \cap E \supset \ball{\overline{\eta}_E(\epsilon)}{p} \cap E = \ball{\overline{\eta}_E(\epsilon)}{p} \cap \sublevel{f_1}{0}
			\end{equation*}
			Since $f_1$ satisfies the conditions placed on $f$ in \Cref{lem:sublevelproj}, we apply the volume bound in  \Cref{lem:sublevelproj} to deduce
			\begin{equation*}
			\vol\qty(\ball{{\epsilon}}{p} \cap E ) \geq  \vol \qty(\ball{\overline{\eta}_E(\epsilon)}{p} \cap \sublevel{f_1}{0}) \geq  \vhs{\frac{\overline{\eta}_E(\epsilon)}{2} \cos\qty( \theta\qty(\frac{\overline{\eta}_E(\epsilon)}{2}))}{\rho}.
			\end{equation*}
		\end{proof}
	\end{lemma}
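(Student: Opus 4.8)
The plan is to shrink the radius from $\epsilon$ down to the bottleneck scale $\overline{\eta}_E(\epsilon)$, at which the ball no longer ``sees'' one of the two constraints cutting out $E$, and then to invoke the single-function volume estimate of \Cref{lem:sublevelproj} for the surviving constraint.

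First I would record the size comparisons that make the cited results applicable. Since $\tfrac{1}{\rho}=\tfrac{1}{\reach_M}+\tfrac{\Lambda}{\mu}\geq\tfrac{1}{\reach_M}$ we have $\rho\leq\reach_M$; and applying \Cref{lem:regreachbound} to the single level set $\level{f_i}{0}$ (the case $k=1$, legitimate because $f_i$ satisfies \Cref{def:regcap1,def:regcap2} with the constants $\mu,\Lambda$) gives $\reach_{\level{f_i}{0}}\geq\rho$. With the hypothesis $\epsilon<\rho$ this yields $\epsilon<\rho\leq\min(\reach_M,\reach_{\level{f_1}{0}},\reach_{\level{f_2}{0}})$, so in particular $\epsilon/2<\min(\reach_{\level{f_1}{0}},\reach_{\level{f_2}{0}})$ and \Cref{prop:bottlethickbound} applies with $r=\epsilon$, giving $\eta_E(\epsilon)\geq\overline{\eta}_E(\epsilon)=\min(\epsilon/2,\rho_F(\epsilon))>0$. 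Note also $\overline{\eta}_E(\epsilon)\leq\epsilon/2<\epsilon<\rho\leq\reach_M$.

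Next I would use the bottleneck estimate. Because $p\in E_\epsilon$ and $\overline{\eta}_E(\epsilon)\leq\eta_E(\epsilon)$, the ball $\ball{\overline{\eta}_E(\epsilon)}{p}$ meets at most one of the two hypersurfaces $\level{f_1}{0},\level{f_2}{0}$; if one worries that $\eta_E(\epsilon)$ is defined as a supremum over strictly smaller radii, observe that the property ``$\ball{t}{p}$ meets at most one level set'' is stable under taking the union over $t\uparrow\overline{\eta}_E(\epsilon)$, so it persists at the radius $\overline{\eta}_E(\epsilon)$ itself. Relabel so that $\ball{\overline{\eta}_E(\epsilon)}{p}\cap\level{f_2}{0}=\emptyset$. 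As $p\in E$ we have $f_2(p)\leq 0$, and since $p$ lies in this ball while $\level{f_2}{0}$ does not, $f_2(p)\neq 0$, hence $f_2(p)<0$; since $\overline{\eta}_E(\epsilon)<\reach_M$ the slice $\ball{\overline{\eta}_E(\epsilon)}{p}\cap M$ is connected by \Cref{thm:geomdistortion}, so $f_2<0$ throughout it. Thus the constraint $f_2\leq 0$ is vacuous on this ball and
\[
\ball{\overline{\eta}_E(\epsilon)}{p}\cap E \;=\; \ball{\overline{\eta}_E(\epsilon)}{p}\cap\sublevel{f_1}{0}.
\]

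Finally I would apply \Cref{lem:sublevelproj}(ii) to $f_1$ at the point $p\in\sublevel{f_1}{0}$ with radius $\overline{\eta}_E(\epsilon)<\rho$, bounding $\vol\big(\ball{\overline{\eta}_E(\epsilon)}{p}\cap\sublevel{f_1}{0}\big)$ below by $\vhs{\frac{\overline{\eta}_E(\epsilon)}{2}\cos(\theta(\frac{\overline{\eta}_E(\epsilon)}{2}))}{\rho}$, and then use monotonicity of volume under $\ball{\overline{\eta}_E(\epsilon)}{p}\subseteq\ball{\epsilon}{p}$ to finish. I do not expect a genuine obstacle here: the substantive input --- the reach lower bound \Cref{lem:regreachbound}, the projection-image description in \Cref{lem:sublevelproj}, and the positivity of the bottleneck thickness \Cref{prop:bottlethickbound} --- is already in place, and the only mild subtlety is the supremum-versus-value remark about $\eta_E(\epsilon)$ noted above.
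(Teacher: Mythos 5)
Your proposal is correct and follows essentially the same route as the paper's proof: shrink the ball to the bottleneck radius $\overline{\eta}_E(\epsilon)$, use positivity of the bottleneck thickness from \Cref{prop:bottlethickbound} to reduce to a single sublevel set, and invoke the volume bound of \Cref{lem:sublevelproj}. You merely supply two small justifications the paper leaves implicit --- that the ``meets at most one level set'' property survives passing to the supremum radius, and the sign/connectivity argument showing the second constraint is vacuous on the small ball --- both of which are fine.
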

	
	\begin{lemma} \label{lem:regintproj2}
		Consider $E = \sublevel{f_1}{0} \cap \sublevel{f_2}{0}$, and suppose $f_1$ and $f_2$ satisfy the conditions placed on $f$ in \Cref{lem:sublevelproj}, and $\rho$ be as defined in \Cref{lem:sublevelproj}. Then, if $\epsilon < \rho$, and $p \in  \level{f_1}{0} \cap \level{f_2}{0}$, then
		\begin{equation*}
		\vol\qty(\ball{\epsilon}{p} \cap E) \geq \vol\qty(\ball{\epsilon\cos(\theta(\epsilon))}{p} \cap \mathfrak{B}(\rho n_1) \cap \mathfrak{B}(\rho n_1)) =: \vhstwo{\epsilon\cos(\theta(\epsilon))}{\rho}{\phi_{12}} > 0
		\end{equation*}
		where $\cos(\phi_{12}) = \inf_{x \in \level{f_1}{0} \cap \level{f_2}{0}} \abs{\expval{n_1(x), n_2(x)}} > -1$ for $n_i(x) = \normvec{\nabla_x f_i}$.
	\end{lemma}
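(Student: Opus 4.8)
The plan is to follow the template of \Cref{lem:sublevelproj}: push $\ball{\epsilon}{p} \cap E$ forward to the tangent plane $\tp{p}{M}$ under the orthogonal projection $\zeta_p$, describe the image from below, and convert the resulting planar volume estimate back into one on $M$. The single ingredient needed beyond the one-function case is that $\zeta_p$ is injective on a neighbourhood of $p$ containing $\ball{\epsilon}{p}\cap M$, so that it carries the double intersection $\sublevel{f_1}{0}\cap\sublevel{f_2}{0}$ to the intersection of the two images. Concretely: since $\epsilon < \rho \leq \reach_M$, \Cref{prop:ballembed} makes $\zeta_p$ a smooth embedding of $\ball{\reach_M}{p}\cap M$, hence injective on $\ball{\epsilon}{p}\cap M$. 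From this I draw two conclusions: \Cref{lem:projvollb} (with $U = \ball{\epsilon}{p}\cap E \subseteq \ball{\epsilon}{p}\cap M$) gives $\vol\qty(\ball{\epsilon}{p}\cap E) \geq \vol\qty(\zeta_p(\ball{\epsilon}{p}\cap E))$, and injectivity on $\ball{\epsilon}{p}\cap M$ lets me interchange $\zeta_p$ with the intersection, so $\zeta_p\qty(\ball{\epsilon}{p}\cap E) = \zeta_p\qty(\ball{\epsilon}{p}\cap\sublevel{f_1}{0}) \cap \zeta_p\qty(\ball{\epsilon}{p}\cap\sublevel{f_2}{0})$.

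Next I would invoke the one-function result on each factor. For $i \in \set{1,2}$ the point $p$ lies on $\level{f_i}{0}$ and $f_i$ obeys the hypotheses imposed on $f$ in \Cref{lem:sublevelproj}, so part (i) there yields $\zeta_p\qty(\ball{\epsilon}{p}\cap\sublevel{f_i}{0}) \supseteq \mathscr{B}_{\epsilon\cos(\theta(\epsilon))}\cap\mathfrak{B}(\rho n_i)$, with $n_i = -\normvec{\nabla_p f_i} \in \tp{p}{M}$. Intersecting the two inclusions and feeding the result into the volume bound above gives
\[
\vol\qty(\ball{\epsilon}{p}\cap E) \geq \vol\qty(\mathscr{B}_{\epsilon\cos(\theta(\epsilon))}\cap\mathfrak{B}(\rho n_1)\cap\mathfrak{B}(\rho n_2)) = \vhstwo{\epsilon\cos(\theta(\epsilon))}{\rho}{\psi},
\]
the last equality being the definition in \Cref{nota:balls}, where $\psi = \anglegeo{n_1}{n_2}$ is the angle subtended by the two gradient directions at $p$.

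To pass to the uniform constant $\phi_{12}$ and to verify positivity, note that $Z := \level{f_1}{0}\cap\level{f_2}{0}$ is closed in the compact $M$, hence compact, and that the regularity condition \Cref{def:regcap1} makes the Jacobian of $(f_1,f_2)$ surjective on $Z$; thus $\nabla_x f_1$ and $\nabla_x f_2$ are linearly independent and $\innerprod{n_1(x)}{n_2(x)} \in (-1,1)$ for every $x \in Z$, so $\phi_{12} := \sup_{x\in Z}\anglegeo{n_1(x)}{n_2(x)}$ is attained and satisfies $\phi_{12} < \pi$. Since $\psi \leq \phi_{12}$ and $\vhstwo{r}{s}{\cdot}$ is non-increasing in its angular slot (enlarging the angle between two equal balls meeting at the origin only decreases the volume of their lens), we obtain $\vol\qty(\ball{\epsilon}{p}\cap E) \geq \vhstwo{\epsilon\cos(\theta(\epsilon))}{\rho}{\phi_{12}}$. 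Positivity then follows: $\epsilon\cos(\theta(\epsilon)) > 0$ because $\theta(\epsilon) = \arcsin\qty(\tfrac{\epsilon}{2\reach_M}) < \tfrac{\pi}{2}$ for $\epsilon < \reach_M$, and $\phi_{12} < \pi$ forces $\mathfrak{B}(\rho n_1)\cap\mathfrak{B}(\rho n_2)$ to contain an open cone of directions issuing from the origin $p$, whose intersection with the ball $\mathscr{B}_{\epsilon\cos(\theta(\epsilon))}$ of positive radius has positive $m$-dimensional volume.

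I expect the proof to be more bookkeeping than conceptual, with one genuinely delicate point: the interchange of $\zeta_p$ with the double intersection fails for a mere local diffeomorphism, so it is essential to use the global injectivity of $\zeta_p$ on $\ball{\reach_M}{p}\cap M$ from \Cref{prop:ballembed}, which is available precisely because $\epsilon < \rho \leq \reach_M$. Beyond this, the work rests on the elementary Euclidean-geometry facts about $\vhstwo{\cdot}{\cdot}{\varphi}$ — that it is monotone in $\varphi$ and strictly positive for $\varphi < \pi$ — together with the observation that \Cref{def:regcap1} keeps $\phi_{12}$ bounded away from $\pi$.
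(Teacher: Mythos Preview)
Your proposal is correct and follows essentially the same route as the paper: use injectivity of $\zeta_p$ on $\ball{\epsilon}{p}\cap M$ (available because $\epsilon<\rho\leq\reach_M$) to split $\zeta_p(\ball{\epsilon}{p}\cap E)$ as the intersection of the two single-function images, apply \Cref{lem:sublevelproj}(i) to each factor, and then pull the planar volume back via \Cref{lem:projvollb}. You are in fact slightly more careful than the paper in one place: you explicitly pass from the pointwise angle $\psi=\anglegeo{n_1}{n_2}$ at $p$ to the uniform worst-case angle $\phi_{12}$ via monotonicity of $\vhstwo{r}{s}{\cdot}$, whereas the paper's proof leaves this step implicit; your positivity argument (an open cone of directions) differs cosmetically from the paper's (tangent balls with non-collinear centres), but both are valid.
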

	
	\begin{proof}
		Given $f_i$ satisfy the conditions placed on $f$ in \Cref{lem:sublevelproj}, and $\rho$ be as defined in \Cref{lem:sublevelproj},
		\begin{equation*}
		\zeta_p\left(\ball{\epsilon}{p} \cap \sublevel{f_i}{0}\right) \supset  \mathscr{B}_{\epsilon\cos \theta(\epsilon)} \cap  \mathfrak{B}(\rho n_i) 
		\end{equation*}
		Because $\epsilon < \rho$, the projection $\zeta_p$ is a diffeomorphism on $\ball{\epsilon}{p} \cap M$, and thus
		\begin{align*}
		\zeta_p\left(\ball{\epsilon}{p} \cap E\right) &=  \zeta_p\left(\ball{\epsilon}{p} \cap \sublevel{f_1}{0}\right)  \cap \zeta_p\left(\ball{\epsilon}{p} \cap \sublevel{f_2}{0}\right)  \\
		&\supset \mathscr{B}_{\epsilon\cos \theta(r)} \cap  \mathfrak{B}(\rho n_1)  \cap  \mathfrak{B}(\rho n_2) .
		\end{align*}
		As $E$ is a regular intersection, $n_1$ and $n_2$ are linearly independent (\Cref{def:regcap1}); and $\varphi_{12} > 0$ as the supremum is taken over a compact set by assumption that $E$ is a regular intersection (\Cref{def:regcap}).
		We now show that this implies $\mathfrak{B}(\rho n_1)  \cap  \mathfrak{B}(\rho n_2)  \neq \emptyset$. We make two observations, first, that $\mathfrak{B}(\rho n_1)$ and  $\mathfrak{B}(\rho n_2) $ are tangent to the origin; and second, the centres  $\rho n_1$ and $\rho n_2$ of  $\mathfrak{B}(\rho n_1) $ and  $\mathfrak{B}(\rho n_2) $ respectively are not collinear with the origin as $n_1$ and $n_2$ are linearly independent. Thus $\mathfrak{B}(\rho n_1)  \cap  \mathfrak{B}(\rho n_2)  \neq \emptyset$.
		
		Since $p$ is in the closure of $\mathfrak{B}(\rho n_1)  \cap  \mathfrak{B}(\rho n_2)$, for $\epsilon > 0$,
		\begin{equation*}
		\ball{\epsilon\cos(\theta(\epsilon))}{p} \cap  \mathfrak{B}(\rho n_1)  \cap \mathfrak{B}(\rho n_2)  \neq \emptyset.
		\end{equation*}
		Finally, since this subset is a non-empty intersection of open sets, it is also open, and it has positive volume.
		
	\end{proof}
	
	\begin{proposition} \label{prop:regintproj}
		Consider $E = \sublevel{f_1}{0} \cap \sublevel{f_2}{0}$, and suppose $f_1$ and $f_2$ satisfy the conditions placed on $f$ in \Cref{lem:sublevelproj}, and $\rho$ be as defined in \Cref{lem:sublevelproj}. Then, if $\epsilon < \rho$, and $p \in  E$, then
		\begin{equation*}
		\vol\qty(\ball{\epsilon}{p} \cap E) \geq  \vhstwo{\frac{\overline{\eta}_E(\epsilon)}{2} \cos(\theta\qty(\frac{\overline{\eta}_E(\epsilon)}{2})))}{\rho}{\phi_{12}} > 0
		\end{equation*}
		where $\phi_{12}$, $\vhstwo{\cdot}{\cdot}{\cdot}$, and $\overline{\eta}_E$ are as defined in \Cref{lem:regintproj2}, \Cref{nota:balls}, and \Cref{prop:bottlethickbound} respectively.
	\end{proposition}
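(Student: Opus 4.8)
The plan is to fix $p \in E$ and $\epsilon < \rho$ and to argue along the dichotomy already set up: either $p \in E_\epsilon$, so that $\ball{\epsilon}{p}$ misses the cusp $\level{f_1}{0} \cap \level{f_2}{0}$, or $p \notin E_\epsilon$, so that it meets it. Before splitting I would record two elementary comparisons that pin the claimed quantity $\vhstwo{\frac{\overline{\eta}_E(\epsilon)}{2}\cos(\theta(\frac{\overline{\eta}_E(\epsilon)}{2}))}{\rho}{\phi_{12}}$ as the weakest lower bound in play: (i) intersecting with the second spherical cap $\mathfrak{B}(\rho n_2)$ can only decrease volume, so $\vhstwo{r}{\rho}{\phi_{12}} \leq \vhs{r}{\rho}$; and (ii) since $\overline{\eta}_E(\epsilon) \leq \epsilon/2$ by \Cref{prop:bottlethickbound}, and $t \mapsto t\cos(\theta(t))$ is increasing on $(0,\reach_M)$ while $\vhstwo{\cdot}{\rho}{\phi_{12}}$ is monotone in its first slot, the claimed quantity is dominated by $\vhstwo{t\cos(\theta(t))}{\rho}{\phi_{12}}$ for every $t \geq \overline{\eta}_E(\epsilon)/2$.

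In the first case $p \in E_\epsilon$ there is nothing to do: \Cref{lem:regintproj1} gives $\vol(\ball{\epsilon}{p} \cap E) \geq \vhs{\frac{\overline{\eta}_E(\epsilon)}{2}\cos(\theta(\frac{\overline{\eta}_E(\epsilon)}{2}))}{\rho}$, which by (i) is at least the target. In the second case I would pick a cusp point $x$ realising $\|p-x\| = d(p, \level{f_1}{0} \cap \level{f_2}{0}) =: \delta_0 < \epsilon$. If $\delta_0 \leq \epsilon - \overline{\eta}_E(\epsilon)/2$ (automatic once $\delta_0 \leq \epsilon/2$, since $\overline{\eta}_E(\epsilon) \leq \epsilon/2$), then $\ball{\epsilon-\delta_0}{x} \subseteq \ball{\epsilon}{p}$ with $\epsilon - \delta_0 \geq \overline{\eta}_E(\epsilon)/2$, and \Cref{lem:regintproj2} applied at the cusp point $x$ yields $\vol(\ball{\epsilon}{p} \cap E) \geq \vhstwo{(\epsilon-\delta_0)\cos(\theta(\epsilon-\delta_0))}{\rho}{\phi_{12}}$, which by (ii) exceeds the target. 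If instead $\epsilon - \overline{\eta}_E(\epsilon)/2 < \delta_0 < \epsilon$, then $p$ is in fact far from the cusp ($\delta_0 > \epsilon/2 \geq \overline{\eta}_E(\epsilon)$) and lies in $E_{\delta_0}$, so I would rerun the proof of \Cref{lem:regintproj1} on $\ball{\delta_0}{p} \subseteq \ball{\epsilon}{p}$: \Cref{prop:bottlethickbound} still supplies a sub-ball of radius $\overline{\eta}_E(\delta_0)$ meeting only one of $\level{f_1}{0}, \level{f_2}{0}$, and \Cref{lem:sublevelproj} gives $\vol(\ball{\epsilon}{p} \cap E) \geq \vhs{\frac{\overline{\eta}_E(\delta_0)}{2}\cos(\theta(\frac{\overline{\eta}_E(\delta_0)}{2}))}{\rho}$. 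Positivity of the final bound is then inherited from $\vhstwo{\cdot}{\cdot}{\phi_{12}} > 0$ (\Cref{lem:regintproj2}, valid because $\phi_{12} < \pi$, i.e. $\nabla f_1$ and $\nabla f_2$ are never antipodal on the cusp) together with $\overline{\eta}_E(\epsilon) > 0$ (\Cref{prop:bottlethickbound}).

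The hard part will be this last regime: when $p$ sits just outside $E_\epsilon$, one gets neither a full-radius cusp-centred ball nor the full Case-1 estimate at radius $\epsilon$, only the estimate at radius $\delta_0 < \epsilon$ with $\overline{\eta}_E(\delta_0)$ in place of $\overline{\eta}_E(\epsilon)$. To close this I expect to need (a) a modulus-of-continuity bound showing $\overline{\eta}_E(\delta_0)$ stays comparable to $\overline{\eta}_E(\epsilon)$ for $\delta_0 \in (\epsilon - \overline{\eta}_E(\epsilon)/2,\, \epsilon)$ — which reduces to controlling how fast $\mu_F$ can decay as one slides into the bottleneck, using the Lipschitz bound $\Lambda_F$ on $\nabla F$ — and (b) a uniform lower bound on the ratio $\vhstwo{r}{\rho}{\phi_{12}}/\vhs{r}{\rho}$ depending only on $\phi_{12}$ and the dimension $m$, so that the strict slack from $\phi_{12} < \pi$ absorbs (a). I would isolate (a) and (b) as a short auxiliary lemma and then assemble the three steps above.
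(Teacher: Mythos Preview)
The paper's proof is a two-line combination of the preceding lemmas: it simply records the bound from \Cref{lem:regintproj2} when $p\notin E_\epsilon$ and the bound from \Cref{lem:regintproj1} when $p\in E_\epsilon$, then observes that $\overline{\eta}_E(\epsilon)<\epsilon$ together with the trivial inequality $\vhstwo{t}{\rho}{\phi_{12}}\le\vhs{t}{\rho}$ (your observation (i)) forces both case bounds to dominate the stated target. There is no passage to a nearby cusp point, no threshold $\epsilon-\overline{\eta}_E(\epsilon)/2$, and no auxiliary lemma of type (a) or (b).

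You have, however, put your finger on a genuine sloppiness that the paper does not address: \Cref{lem:regintproj2} is only stated for $p$ \emph{on} the cusp $\level{f_1}{0}\cap\level{f_2}{0}$, yet the paper invokes it for arbitrary $p\notin E_\epsilon$ as though it yielded $\vhstwo{\epsilon\cos(\theta(\epsilon))}{\rho}{\phi_{12}}$ at such $p$ directly. Your instinct to pass to a cusp point $x$ and shrink the ball is the right repair (it mirrors the halving trick already used in \Cref{lem:sublevelproj}(ii)), but the three-way split and the conjectural lemmas (a) and (b) are far more than is needed, and it is not clear that the ``slack from $\phi_{12}<\pi$'' would actually absorb the loss in (a) in any dimension-uniform way. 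A cleaner route, which bypasses your ``hard part'' entirely, is to run the whole dichotomy at threshold $\epsilon/2$ rather than $\epsilon$: if $d(p,\level{f_1}{0}\cap\level{f_2}{0})\le\epsilon/2$ use $\ball{\epsilon/2}{x}\subset\ball{\epsilon}{p}$ with \Cref{lem:regintproj2}; otherwise $p\in E_{\epsilon/2}$ and \Cref{lem:regintproj1} applied with parameter $\epsilon/2$ bounds $\vol(\ball{\epsilon/2}{p}\cap E)\le\vol(\ball{\epsilon}{p}\cap E)$. Both cases then go through verbatim from the existing lemmas and yield the stated bound with $\overline{\eta}_E(\epsilon/2)$ in place of $\overline{\eta}_E(\epsilon)$ --- a harmless weakening, since downstream in \Cref{thm:ebound} only positivity of $K(r)$ matters. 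Your modulus-of-continuity estimate (a) and ratio bound (b) can be dropped.
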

	
	\begin{proof} Combining \Cref{lem:regintproj1} and \Cref{lem:regintproj2}, we have, for $p \in E$ and $\epsilon < \rho$,
		\begin{equation*}
		\vol\qty(\ball{\epsilon}{p} \cap E) \geq
		\begin{cases}
		\vhstwo{\epsilon \cos(\epsilon)}{\rho}{\phi_{12}} & \qq{if} p \notin E_\epsilon \\
		& \\
		\vhs{\frac{\overline{\eta}_E(\epsilon)}{2}\cos(\theta\qty(\frac{\overline{\eta}_E(\epsilon)}{2})))}{\rho}  & \qq{otherwise.}
		\end{cases}
		\end{equation*}
		Since $\epsilon > \overline{\eta}_E(\epsilon)$ (\Cref{prop:bottlethickbound}), and for any $t > 0$,
		\begin{align*}
		\vhstwo{t}{\rho}{\phi_{12}}  &= \vol\qty(\mathscr{B}_{t\cos(\theta(t))} \cap \mathfrak{B}(\rho n_1) \cap \mathfrak{B}(\rho n_2)) \\
		&\leq \vol\qty(\mathscr{B}_{t\cos(\theta(t))} \cap \mathfrak{B}(\rho n_1)) \\
		&= \vhs{t \cos(\theta(t))}{\rho},
		\end{align*}
		our bound holds for either $p \in E_\epsilon$ or otherwise.
	\end{proof}

	\section{Technical Lemmas}
	%\nswN*

	\begin{lemma}\label{lem:measurecont}
		Let $A$ be measurable subset of a Riemannian manifold $M \subset \R^d$ with $\mu_g(A) > 0$, where $\mu_g$ is the Riemannian density on $M$. If $\nu$ is a measure supported on $M$ that is continuous with respect to $\mu_g$, then for any radius $r > 0$ the function $p \mapsto \nu(\ball{r}{p}) \cap A$ is continuous on $A$.
		\begin{proof}
			Consider $c$ such that $\norm{c-p}= \delta < r$. Then
			\begin{align*}
			\ball{r-\delta}{p} &\subset \ball{r}{c} \subset \ball{r+\delta}{p} \\
			\implies \nu(\ball{r-\delta}{p} \cap A) &\leq  \nu(\ball{r}{c} \cap A) \leq \nu(\ball{r+\delta}{p} \cap A).
			\end{align*}
			As $\nu(\ball{r+\delta}{p} \cap A)$ monotonically increases with $\delta$, as we decrease $\delta$, the monotonicity and continuity of the measure with respect to $\delta$ ensures that  $\nu(\ball{r\pm\delta}{p} \cap A) \xrightarrow{\delta \to 0} \nu(\ball{r}{p} \cap A)$. Thus, by the sandwich theorem,
			\begin{equation*}
			\lim_{c \to p} \nu(\ball{r}{c} \cap A) = \nu(\ball{r}{p} \cap A).
			\end{equation*}
			Thus, $\nu(\ball{r}{p} \cap A)$ is continuous with respect to $p$.
		\end{proof}
	\end{lemma}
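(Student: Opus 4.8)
The plan is to prove continuity by a squeeze argument built on elementary inclusions between concentric Euclidean balls of nearby radii. Fix $p \in A$ and $r > 0$, and let $c \in A$ with $\delta := \norm{c-p} < r$. By the triangle inequality one has $\ball{r-\delta}{p} \subseteq \ball{r}{c} \subseteq \ball{r+\delta}{p}$: a point within distance $r-\delta$ of $p$ is within distance $r$ of $c$, and a point within distance $r$ of $c$ is within distance $r+\delta$ of $p$. Intersecting each term with $A$ and using monotonicity of the nonnegative measure $\nu$ gives
\[
\nu\qty(\ball{r-\delta}{p} \cap A) \;\leq\; \nu\qty(\ball{r}{c} \cap A) \;\leq\; \nu\qty(\ball{r+\delta}{p} \cap A).
\]

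Next I would let $\delta \downarrow 0$ and evaluate the two outer terms, which depend only on $\delta$, using continuity of measure along monotone set families. The sets $\ball{r-\delta}{p} \cap A$ increase to $\ball{r}{p}\cap A$, so continuity from below gives $\nu(\ball{r-\delta}{p}\cap A) \to \nu(\ball{r}{p}\cap A)$; the sets $\ball{r+\delta}{p}\cap A$ decrease to $\ballc{r}{p}\cap A$, the \emph{closed} ball, so continuity from above gives $\nu(\ball{r+\delta}{p}\cap A) \to \nu(\ballc{r}{p}\cap A)$. Provided these two limits agree, the squeeze forces $\nu(\ball{r}{c}\cap A) \to \nu(\ball{r}{p}\cap A)$ as $c \to p$ in $A$, and since the limiting value is independent of the approaching sequence, $p \mapsto \nu(\ball{r}{p}\cap A)$ is continuous on $A$.

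The one step I expect to need genuine care is reconciling these limits, since $\bigcap_{\delta>0}\ball{r+\delta}{p}$ is the closed ball and a priori the outer bound converges to $\nu(\ballc{r}{p}\cap A)$; this equals $\nu(\ball{r}{p}\cap A)$ exactly when the metric sphere $\set{x : \norm{x-p}=r}$ meets $A$ in a $\nu$-null set. Here I would invoke the hypothesis that $\nu$ is continuous with respect to $\mu_g$: it suffices that this sphere be $\mu_g$-null in $M$, which holds because it is a level set of the smooth function $x \mapsto \norm{x-p}^2$ on $M$, hence a codimension-one submanifold of $M$ at regular values by Sard's theorem. Absolute continuity then transfers nullity to $\nu$, completing the argument. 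I would also remark that only the lower inclusion $\ball{r-\delta}{p} \subseteq \ball{r}{c}$ is needed to deduce that $p \mapsto \nu(\ball{r}{p}\cap A)$ is \emph{lower} semicontinuous, which is in fact all that its sole application, \Cref{lem:Kpos}, actually requires.
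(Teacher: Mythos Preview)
Your proof follows the same squeeze argument as the paper: the inclusions $\ball{r-\delta}{p}\subset\ball{r}{c}\subset\ball{r+\delta}{p}$, monotonicity of $\nu$, and the limit $\delta\to 0$. You are in fact more careful than the paper at the limiting step: the paper simply asserts $\nu(\ball{r\pm\delta}{p}\cap A)\to\nu(\ball{r}{p}\cap A)$ by ``monotonicity and continuity of the measure'', whereas you correctly note that continuity from above delivers only $\nu(\ballc{r}{p}\cap A)$ and that one must therefore check that the Euclidean sphere of radius $r$ about $p$ meets $M$ in a $\mu_g$-null set.

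Your Sard justification does not quite close this, since Sard only guarantees that \emph{almost every} value is regular for $x\mapsto\norm{x-p}^2$ on $M$; for the particular $r$ at hand, nothing in the hypotheses prevents $M$ from containing a positive-$\mu_g$-measure patch of that Euclidean sphere, in which case full continuity can genuinely fail. Your final remark is the correct way out: the lower inclusion alone already yields lower semicontinuity of $p\mapsto\nu(\ball{r}{p}\cap A)$ with no extra hypothesis, and a positive lower semicontinuous function on a compact set attains a positive infimum, which is precisely what \Cref{lem:Kpos} requires.
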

	
	For $M \subset \R^d$ Recall that $\zeta_p:M \to T_pM$ is the orthogonal projection onto the $m$-dimensional plane tangent to $M$ at $p$. 
	\begin{restatable}{lemma}{projdet} \label{lem:projdet}%
		Let $\dd_q{\zeta_p}$ be the Jacobian of $\zeta_p$ at $q \in M$ with respect to orthonormal coordinates in $T_qM$ and $\tp{p}{M}$. Then $\abs{\det \dd{_q\zeta_p}} \leq 1$.
	\end{restatable}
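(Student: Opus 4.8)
The plan is to identify $\dd_q{\zeta_p}$ with the restriction of the ambient orthogonal projection $\R^d \to \tp{p}{M}$ to the subspace $\tp{q}{M}$, and then to exploit the elementary fact that orthogonal projections do not increase lengths.

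First I would unwind the definitions. The map $\zeta_p$ is by construction the composite $P_p \circ \iota_M$, where $\iota_M : M \hookrightarrow \R^d$ is the inclusion and $P_p : \R^d \twoheadrightarrow \tp{p}{M}$ is the \emph{linear} orthogonal projection. Since $P_p$ is linear, its differential at any point equals $P_p$ itself; hence, under the standard identification of the tangent space to $\tp{p}{M}$ at $\zeta_p(q)$ with $\tp{p}{M}$, one obtains
\begin{equation*}
\dd_q{\zeta_p} = P_p \circ \dd_q{\iota_M} = P_p\big|_{\tp{q}{M}} : \tp{q}{M} \longrightarrow \tp{p}{M},
\end{equation*}
a linear map between two $m$-dimensional Euclidean spaces, where $\dd_q{\iota_M} : \tp{q}{M} \hookrightarrow T_q\R^d = \R^d$ is the canonical inclusion.

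Next I would bound the determinant of $A := P_p\big|_{\tp{q}{M}}$ expressed in orthonormal bases of its domain and codomain. The key point is that $P_p$ is $1$-Lipschitz: $\norm{P_p v} \leq \norm{v}$ for all $v \in \R^d$, with equality exactly on $\tp{p}{M}$. In particular $\norm{A} \leq 1$ as an operator, so the singular values $\sigma_1 \geq \cdots \geq \sigma_m \geq 0$ of $A$ all satisfy $\sigma_i \leq \sigma_1 = \norm{A} \leq 1$, and therefore $\abs{\det \dd_q{\zeta_p}} = \abs{\det A} = \prod_{i=1}^m \sigma_i \leq 1$. Equivalently, $A$ carries the unit ball of $\tp{q}{M}$ into the unit ball of $\tp{p}{M}$, and since $\abs{\det A}$ is precisely the factor by which $A$ scales $m$-dimensional volume, it cannot exceed $1$.

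There is essentially no obstacle in this argument; the only step warranting a little care is the first one, namely verifying that the Jacobian of $\zeta_p$ at $q$, taken in orthonormal coordinates, really is the restricted projection $P_p\big|_{\tp{q}{M}}$ — that is, tracking the tangent-space identifications correctly so that the linear-algebra estimate applies verbatim.
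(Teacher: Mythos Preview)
Your proposal is correct and follows essentially the same route as the paper: both identify $\dd_q\zeta_p$ with the restriction of the ambient linear orthogonal projection onto $\tp{p}{M}$ to the subspace $\tp{q}{M}$, and then invoke the elementary fact that such a restriction has determinant of absolute value at most $1$. Your version is in fact more explicit about the final linear-algebra step (via operator norm and singular values), whereas the paper simply records the matrix form $1_m \oplus 0_{d-m}$ and asserts the bound.
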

	
	\begin{proof}
		Let $P: \R^d \to \R^m$ be the orthogonal projection onto the $m$-dimensional plane in $\R^d$ spanning the first $m$ coordinates. As $F$ is the restriction of $P$ to $M$, therefore the derivative $\dd{F}: \Tb{M} \to \R^m$ between tangent bundles is the restriction of $\dd{P}: \Tb{\R^d} \to \R^m$ to $\Tb{M}$. We have $\dd{P} = 1_m \oplus 0_{d-m}$ where $1_m$ is the identity matrix corresponding to the first $m$ coordinates, and $0_{m,d-m}$ is the $m\times d-m$ matrix of zeros. Since $\dd{_pF}$ is a restriction of $\dd{_pP}$ to an $m$-dimensional subspace $T_pM \subset T_p \R^d$, the absolute value of the determinant of $\dd{_pF}$ is at most 1.
	\end{proof}

	\bibliographystyle{abbrv}
	\bibliography{refs}
	
\end{document}